\documentclass{article}

\title{Bourgain's $L^2$ pointwise ergodic theorem over function fields}
\author{
Thái Hoàng Lê$^{3}$\and
Andrew Lott$^{1,2}$ 
}
\usepackage[margin=1in]{geometry} 
\usepackage{amsthm}
\usepackage{amsmath,amssymb,amsfonts,mathrsfs}
\usepackage{microtype}
\usepackage{indentfirst}
\usepackage{graphicx}
\usepackage{comment}
\usepackage{enumitem}
\usepackage{esint}
\usepackage[colorlinks=true,
            linkcolor=blue,
            urlcolor=blue,
            citecolor=blue]{hyperref}

\theoremstyle{definition} 
\newtheorem{thm}{Theorem}
\newtheorem{cor}{Corollary}[section]
\newtheorem{lem}{Lemma}[section]
\newtheorem{prop}{Proposition}[section]
\newtheorem{defn}{Definition}[section]

\newtheorem*{thmB'}{Theorem B$^\prime$}
\newtheorem*{thmC'}{Theorem C$^\prime$}
\newtheorem{externalthm}{Theorem}

\theoremstyle{remark}
\newtheorem*{remark}{Remark}

\numberwithin{equation}{section}

\newcommand{\F}{\mathbb{F}}

\newcommand{\N}{\mathbb{N}}
\newcommand{\Z}{\mathbb{Z}}
\newcommand{\1}{\textbf{1}}

\newcommand{\ord}{\text{ord}}
\newcommand{\wh}{\widehat}

\begin{document}

\maketitle

\begin{center}
$^{1}$University of Georgia \\
$^{2}$ HUN-REN Alfr\'ed R\'enyi Institute of Mathematics (Erd\H{o}s Center)\\
$^{3}$ The University of Mississippi
\end{center}

\begin{abstract}
We prove a function-field analogue of Bourgain's $L^2$ pointwise ergodic theorem. Let $q$ be a power of a prime $p$, let $\F_q[t]$ be the ring of polynomials over the finite field $\F_q$, and let $\F_q[t][u]$ be the ring of polynomials over $\F_q[t]$. Let $T^{(1)},\ldots,T^{(\ell)}$ be commuting, measure-preserving $\F_q[t]$-actions on a $\sigma$-finite measure space $(X,\mu)$, and let $P_1,\ldots,P_\ell\in \F_q[t][u]\setminus\{0\}$. Define a sequence of operators $(A_n)_{n\in \N}$ by 
\[
A_n g(x):=\frac{1}{q^n}\sum_{\substack{f\in \F_q[t]\\\deg f<n}}
g\left(T^{(1)}_{P_1(f)}\cdots T^{(\ell)}_{P_\ell(f)}x\right)
\qquad \left( g\in L^2(X),\,\,x\in X\right).
\]
We prove that $(A_n)_{n\in\N}$ satisfies an $L^2$ oscillation ergodic theorem: 
\[
\sup_{\substack{n_1<\cdots <n_{t_0}\\ t_0\in \N}}
\left(
\int_X
\sum_{j=1}^{t_0-1}
\sup_{n_j\leq n<n_{j+1}}
|A_ng(x)-A_{n_{j+1}}g(x)|^2
\,d\mu(x)
\right)^{1/2}
\leq C_1\|g\|_{L^2(X)}\qquad \left( g\in L^2(X)\right),
\]
where the constant $C_1>0$ depends only on $P_1,\ldots,P_\ell$ and $q$. This in particular implies that the sequence $(A_ng(x))_{n\in\N}$ converges for almost every $x\in X$ and that $(A_n)_{n\in\N}$ satisfies an $L^2$ maximal inequality:
\[
\left\|\sup_{n\in\N}|A_ng|\right\|_{L^2(X)}
\leq C_2\|g\|_{L^2(X)}
\qquad \left( g\in L^2(X)\right),
\]
where the constant $C_2>0$ depends only on $P_1,\ldots,P_\ell$ and $q$.
Our tools include the circle method in function fields and refinements of Weyl sum estimates in this setting, further developing the work of L\^e--Liu--Wooley and Champagne--Ge--L\^e--Liu--Wooley. These refinements are of independent interest.
\end{abstract}

\tableofcontents

\section{Introduction}

Bourgain's polynomial pointwise ergodic theorem is a landmark result in
ergodic theory. In the $L^2$ form most closely related to the present paper, it may
be stated as follows.

\begin{thm}[Bourgain]\label{thm:BourgainPolynomial}
Let $(X,\mu)$ be a $\sigma$-finite measure space. Let $T_1,\ldots,T_\ell:X\to X$ be invertible, commuting, measure-preserving transformations and 
$P_1,\ldots,P_\ell\in \Z[t]$.
If $g\in L^2(X)$, then the limit
\[
\lim_{N\to\infty}
\frac{1}{N}\sum_{n=1}^N
g\left(T_1^{P_1(n)}\cdots T_\ell^{P_\ell(n)}x\right)
\]
exists for almost every $x\in X$.
\end{thm}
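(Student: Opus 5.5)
The plan is Bourgain's classical strategy. First, by the Calderón transference principle, I reduce to the integer shift model. Working on $X=\Z^\ell$ with counting measure and the commuting shifts $T_i$, it suffices to prove an oscillation (or variational/long-jump) inequality for the convolution operators
\[
K_N g(x)=\frac1N\sum_{n=1}^N g\bigl(x-(P_1(n),\ldots,P_\ell(n))\bigr),\qquad g\in \ell^2(\Z^\ell).
\]
Such an inequality gives a.e. convergence on a dense class plus a maximal inequality, hence convergence for all of $L^2$. I also restrict to lacunary $N=\lfloor\tau^k\rfloor$. The short-variation pieces between consecutive lacunary scales are handled by a trivial square-function bound, since each $g(x-P(n))$ contributes $O(1/N)$ and the polynomial image has bounded density of repetitions. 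Passing from lacunary times back to all $N$ then follows by letting $\tau\to1$ along a countable set.

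Next I pass to the Fourier side and analyse the multiplier $m_N(\xi)=\frac1N\sum_{n\le N}e(P(n)\cdot\xi)$ on $\mathbb T^\ell$ by the circle method. Near each rational point $a/s$ with $s\le N^{\delta}$, in an $N^{-\deg+\delta}$-neighbourhood (scaled per coordinate by $\deg P_i$), I approximate
\[
m_N(\xi)\approx S(a/s)\,\Phi_N(\xi-a/s),
\]
where $S(a/s)$ is the complete Gauss sum and $\Phi_N$ is the continuous integral $\int_0^1 e(P(Nt)\cdot\eta)\,dt$. Off these major arcs, Weyl's inequality gives $|m_N|\lesssim N^{-c}$. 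Summed over the lacunary $N$, this part contributes a convergent geometric series in $\ell^2$ by Plancherel.

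The main obstacle is the major-arc model. It is a sum over dyadic denominator blocks $2^s\le q<2^{s+1}$ of multipliers $\sum_{a/q}S(a/q)\Phi_N(\xi-a/q)\eta(\ldots)$, and I must control the oscillation over $N$ of each block with a bound $\lesssim s^{C}2^{-cs}$. The decay factor $2^{-cs}$ comes from the Gauss sum bound $|S(a/q)|\lesssim q^{-1/d}$, so the difficulty is showing that the oscillation costs at most polylogarithmically in the number of rationals. For that I would prove Bourgain's multi-frequency lemma: for $\Lambda$ a $\delta$-separated set of $R$ frequencies,
\[
\Bigl\|\sup_{N}\bigl|\mathcal F^{-1}\bigl(\textstyle\sum_{\theta\in\Lambda}\Phi_N(\xi-\theta)\hat g\bigr)\bigr|\Bigr\|_2\lesssim (\log R)^2\|g\|_2 .
\]
I would first split $N$ into small scales $N<R^{C}$ and large scales. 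For small scales, a Rademacher–Menshov type bound gives a $\log R$ loss. For large scales, the frequencies decouple after periodization mod $q$, which reduces matters to a single-frequency continuous oscillation inequality; that one holds by comparison with Lebesgue differentiation. The same argument works for oscillation seminorms in place of the maximal function.

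Finally, summing the block estimates over $s$ gives $\sum_s s^C 2^{-cs}<\infty$, which closes the oscillation inequality and hence yields the pointwise convergence in Theorem~\ref{thm:BourgainPolynomial}.
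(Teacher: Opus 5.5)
The paper does not prove this statement. It quotes Bourgain's theorem as motivation and proves only the $\F_q[t]$ analogue. Your outline is the integer version of the architecture used there: transference, circle method, Gauss-sum decay per denominator block, Rademacher--Menshov at small scales, and periodization modulo a common denominator plus a Hardy--Littlewood bound at large scales. However, two of your steps fail as written.

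\textbf{First gap: the circle method needs a normal form.} You run the circle method directly on $(P_1(n),\ldots,P_\ell(n))$ in $\Z^\ell$, with coordinatewise boxes of size $N^{-\deg P_i+\delta}$. Weyl's inequality only controls the coefficients of the single polynomial $n\mapsto P(n)\cdot\xi$, namely the combinations $\sum_i c_{ij}\xi_i$ where $P_i(n)=\sum_j c_{ij}n^j$. These need not localize $\xi$.
\begin{itemize}
\item For $P_1=P_2=n^2$ one has $m_N(\theta,-\theta)=1$ for every $\theta\in\mathbb T$, irrational or not.
\item For $P_1=n^2$, $P_2=n^2+n$ one has $m_N(\beta,-\beta)=\frac1N\sum_{n\le N}e(-n\beta)\approx 1$ for $|\beta|\le N^{-1-\delta}$, far outside your $N^{-2+\delta}$ boxes.
\end{itemize}
So your minor-arc bound is false in general. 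The missing step is the reduction to a normal form, which is exactly how the paper begins (Lemma~\ref{preliminaryreduction} and the remark on constant terms; over $\Z$ it is simpler since there is no Frobenius). Absorb the constant terms into $g$ and write $T_1^{P_1(n)}\cdots T_\ell^{P_\ell(n)}=S_1^{n}S_2^{n^2}\cdots S_d^{n^d}$ with $S_j=\prod_i T_i^{c_{ij}}$, which are commuting, invertible and measure preserving. Then transfer to $(n,n^2,\ldots,n^d)$ on $\Z^d$. There Weyl's inequality gives a single $q\le N^{C\delta}$ with $\|q\xi_j\|\le N^{-j+C\delta}$ for all $j$. Hua's bound $|S(\boldsymbol a/q)|\lesssim_\varepsilon q^{-1/d+\varepsilon}$ for $\gcd(a_1,\ldots,a_d,q)=1$ then gives the decay you need.

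\textbf{Second gap: the scale split in your multi-frequency step is miscalibrated.} Let $Q:=\operatorname{lcm}\{q:2^s\le q<2^{s+1}\}$ be the common denominator of block $s$, so $\log Q\asymp 2^s$. Periodization modulo $Q$ decouples the frequencies only when the bumps $\Phi_N(\cdot-\boldsymbol a/q)$ sit inside $Q^{-1}$-neighbourhoods, i.e.\ for $N\ge Q^{C}$. You split at $N=R^C$, where $R=2^{O(s)}$ is the number of frequencies. This leaves the range $R^C\le N<Q^C$ covered by neither argument.

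Note also that, stated for arbitrary $\delta$-separated frequencies, your lemma is Bourgain's, and its large-scale proof is not a periodization argument. Periodization only handles rationals with a common denominator, which is all you need here.

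The repair is to put the threshold at $N\approx Q^{C}$. Below it there are $\asymp\log Q\asymp 2^s$ lacunary scales, so Rademacher--Menshov costs $\log(2^s)\asymp s$, still summable against $2^{-cs}$. This is the paper's own calibration: $R_s=\deg Q_s$ is the logarithm of the periodization modulus, the small scales are $n<R_s^4$, and the loss is $\log(2+R_s)\asymp s$.

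\textbf{Smaller points.}
\begin{itemize}
\item Your ``trivial square-function bound'' for short variations is not available. The naive Cauchy--Schwarz estimate gives about $(\tau-1)^2\|g\|_2^2$ per lacunary block, which does not sum. It is also unnecessary: for $g\ge0$ and $N_k\le N\le N_{k+1}$ one has $\frac{N_k}{N_{k+1}}K_{N_k}g\le K_Ng\le \frac{N_{k+1}}{N_k}K_{N_{k+1}}g$. So lacunary convergence along a sequence $\tau\to1$ suffices.
\item You must pull the Gauss sums out before applying a uniform-weight multi-frequency lemma, as the paper does with $w_s$ in the proof of Proposition~\ref{Crosc}.
\item The continuous single-frequency step needs the Fourier decay of $\Phi_N$, not Lebesgue differentiation alone. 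That decay is what lets you compare moment-curve averages with box averages via a lacunary square function.
\end{itemize}
Over $\F_q[t]$ these last issues disappear: $\wh M_n\equiv 1$ on $\mathcal B_n$ and the major-arc identity is exact, so the large-scale operators are averages over nested subgroups.
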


Bourgain's proof combined ideas from number theory, harmonic analysis, probability, and the theory of Banach spaces, and his techniques remain central to the study of pointwise ergodic theorems \cite{Bourgain1988Maximal,Bourgain1988Lp,Bourgain1989ArithmeticSets}. See also \cite{Krause2023Bourbaki, Mirek2026CircleMethodPointwiseErgodic} for modern accounts of Bourgain's methods.

We prove an analogue of Bourgain's $L^2$ polynomial pointwise ergodic theorem in the function-field setting. Before we state the theorem, we must first establish definitions. 
An $\F_q[t]$-action $T$ on a $\sigma$-finite measure space $(X,\mu)$ is a family of measurable transformations
\[
\{T_a:X\to X\}_{a\in\F_q[t]}
\]
satisfying $T_0=\mathrm{id}$ and $T_{a+b}=T_aT_b$ for all $a,b\in\F_q[t]$. We say that such an
action is measure-preserving if, for every $a\in \F_q[t]$, the transformation $T_a\colon X\to X$ satisfies $\mu(E)=\mu(T_a^{-1}(E))$ for every measurable subset $E\subseteq X$. We say that $\F_q[t]$-actions $T^{(1)},\ldots,T^{(\ell)}$ on $(X,\mu)$ commute
if, for every $1\le i,j\le \ell$ and every $a,b\in \F_q[t]$, the transformations $T_a^{(i)}$ and
$T_b^{(j)}$ commute.

In fact, we prove the following quantitative result, which is known as a uniform oscillation inequality (conclusion $1$) and has a variety of consequences (conclusions $2$--$4$).

\begin{thm}\label{unweightedPW_0}
Let $T^{(1)},\ldots,T^{(\ell)}$ be commuting, measure-preserving $\F_q[t]$-actions on a $\sigma$-finite measure space $(X,\mu)$. Let $P_1,\ldots,P_\ell\in \F_q[t][u]\setminus\{0\}$. Define $A_n$ on $L^2(X)$ by
\[
A_ng(x):=\frac{1}{q^n}\sum_{\deg f<n}
g\big(T^{(1)}_{P_1(f)}\cdots T^{(\ell)}_{P_\ell(f)}x\big).
\]
Then there exist constants $C_1,C_2>0$ depending only on $P_1,\ldots,P_\ell$ and $q$ such that the following hold.
\begin{enumerate}[label=(\arabic*)]
\item For every $g\in L^2(X)$,
\[
\sup_{\substack{n_1<\cdots <n_{t_0}\\ t_0\in \N}}
\left(
\int_X
\sum_{j=1}^{t_0-1}
\sup_{n_j\leq n<n_{j+1}}
|A_ng(x)-A_{n_{j+1}}g(x)|^2
\,d\mu(x)
\right)^{1/2}
\leq C_1\|g\|_{L^2(X)}.
\]
\item For every $g\in L^2(X)$,
\[
\left\|\sup_{n\in\N}|A_ng|\right\|_{L^2(X)}
\leq C_2\|g\|_{L^2(X)}.
\]
\item For every $g\in L^2(X)$, the sequence $(A_ng(x))_{n\in\N}$ converges for almost every $x\in X$.
\item For every $g\in L^2(X)$, the sequence $(A_ng)_{n\in\N}$ converges with respect to the $L^2$-norm on $X$.
\end{enumerate}
\end{thm}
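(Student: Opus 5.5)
I will follow Bourgain's scheme, adapted to the non-archimedean setting. The first step is to dispose of conclusions (2)--(4) given (1). The maximal inequality (2) follows from (1) by taking $t_0=2$ with $n_1=1$ and $n_2=N$, letting $N\to\infty$, and using $\|A_N g\|_{L^2}\le\|g\|_{L^2}$. Pointwise convergence (3) follows in the standard way: if $(A_ng(x))$ failed to converge on a set of positive measure, we could choose arbitrarily long sequences $n_1<\cdots<n_{t_0}$ along which the oscillation stays above some $\varepsilon>0$ on a set of measure at least $\delta$. This would force the left side of (1) to be at least $\varepsilon\sqrt{\delta(t_0-1)}$, which is unbounded. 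Norm convergence (4) follows from (3) together with dominated convergence, using the maximal function from (2) as the dominating function.

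To prove (1), I transfer to the model space $\F_q[t]^\ell$ with counting measure, via the Calderón transference principle. This transference is straightforward here because $\F_q[t]$ is amenable: the balls $\{\deg<M\}$ form a Følner sequence.

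On the model space, $A_n$ is convolution with the probability measure $K_n$ supported on the set of values $(P_1(f),\ldots,P_\ell(f))$ with $\deg f<n$. Its Fourier multiplier on $\mathbb{T}^\ell$ is a Weyl sum
\[
m_n(\xi)=q^{-n}\sum_{\deg f<n}e\Bigl(\textstyle\sum_i \xi_i P_i(f)\Bigr),
\]
where $\mathbb{T}=\F_q((1/t))/\F_q[t]$ and $e$ is the standard additive character.

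Next I apply the circle method. Near a rational point $a/r$ with $\deg r\le \varepsilon n$, I expect
\[
m_n(\xi)\approx G(a/r)\,\Phi_n(\xi-a/r),
\]
where $G$ is a complete exponential sum and $\Phi_n$ is the corresponding continuous (archimedean-free) integral. In the function-field setting $\Phi_n$ should be exactly an indicator-type function of a box, up to a small error, which makes the main term very clean. Off the major arcs, the refined Weyl bounds I intend to establish later should give
\[
|m_n(\xi)|\lesssim q^{-cn}.
\]

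I then decompose $m_n$ into dyadic pieces according to the height $\deg r\sim s$. Using the complete-sum bound $|G(a/r)|\lesssim q^{-c\deg r}$, the piece at height $s$ has norm decaying like $q^{-cs}$. For each fixed $s$, I need an oscillation bound for the pieces $\sum_{\deg r\sim s}G\,\Phi_n(\cdot-a/r)\,\eta_s(\cdot-a/r)$ with loss at most polynomial in $s$. This is Bourgain's multi-frequency lemma, using his logarithmic bound in the number of frequencies. Here the ultrametric structure should help, since the $\Phi_n$ are martingale-like (nested conditional expectations). The oscillation norm of a martingale is controlled by a sharp $L^2$ inequality, and the scales $n\ge s$ separate cleanly from the small scales. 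Summing $q^{-cs}\,\mathrm{poly}(s)$ over $s$ converges. The error terms are handled by square functions: the minor-arc and approximation errors decay geometrically in $n$, so
\[
\sum_n \|\text{error}_n\|_{\ell^2\to\ell^2}<\infty,
\]
which dominates their contribution to the oscillation.

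The main obstacle, I expect, is the Weyl-sum input in positive characteristic. When $\deg P_i\ge p$, the classical Weyl differencing breaks down, and the minor-arc bound must be obtained uniformly on the full complement of the major arcs at the right scale. I would first try the Lê–Liu–Wooley approach of passing to the Frobenius-reduced shape of $P_i$. This means grouping the monomials $u^k$ by the $p$-adic structure of $k$, and adapting the major-arc parametrization to that shape. Only afterwards would I derive the geometric decay of $G$ and the exact approximation $m_n\approx G\Phi_n$.
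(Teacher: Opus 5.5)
Your deduction of (2)--(4) from (1) and the transference step are fine, but there is a genuine gap in the circle-method step. You transfer to $\F_q[t]^\ell$ with the original polynomials and then assert two things for $m_n(\xi)=q^{-n}\sum_{\deg f<n}e(\sum_i\xi_iP_i(f))$: decay $q^{-cn}$ away from small-height rationals, and $|G(a/r)|\ll q^{-c\deg r}$. In characteristic $p$ both fail as soon as some $P_i$ has a monomial whose exponent is divisible by $p$, and no refinement of Weyl-sum bounds can rescue them.

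Take $\ell=1$, $P_1(u)=u^p$. Since $f\mapsto f^p$ is additive, $f\mapsto e(\xi f^p)$ is a character of $\{\deg f<n\}$. Writing $\xi=\sum_{i\ge1}x_it^{-i}$, one gets $m_n(\xi)=1$ if $x_{pj+1}=0$ for all $0\le j<n$, and $m_n(\xi)=0$ otherwise. So $\{m_n=1\}$ is a subgroup of measure $q^{-n}$. This is far larger than the major arcs $\{\ord(\xi-a/r)<-pn+\varepsilon n,\ \deg r\le\varepsilon n\}$, whose total measure is at most $q^{-pn+3\varepsilon n}$. Moreover $G(1/t^i)=1$ for every $i\ge1$ with $i\not\equiv1\pmod p$, so the decomposition by height has no decay. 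Similarly, for $P_1=u^3$, $P_2=u^{3p}$ ($p\ne3$), the multiplier depends only on $\xi_1+\phi(\xi_2)$, where $e(\gamma g^p)=e(\phi(\gamma)g)$ for all $g\in\F_q[t]$. It is therefore identically $1$ on an uncountable graph.

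The missing idea is the paper's normal-form reduction, carried out on the dynamical side \emph{before} transference. Group the monomials $a\,u^{p^\nu r}$ by the $p$-free part $r$ of the exponent. Since $v\mapsto av^{p^\nu}$ is additive, you can define new commuting $\F_q[t]$-actions $S^{(i)}_v:=\prod T^{(j)}_{av^{p^\nu}}$, the product running over the monomials with $p$-free part $r_i$. Then $T^{(1)}_{P_1(f)}\cdots T^{(\ell)}_{P_\ell(f)}=S^{(1)}_{f^{r_1}}\cdots S^{(k)}_{f^{r_k}}$ with distinct $r_i$ coprime to $p$, constant terms are absorbed into $g$, and you transfer to $\F_q[t]^k$. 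For these monic monomials the L\^e--Liu--Wooley theory does give the minor-arc and Gauss-sum bounds you want. Your remark about the ``Frobenius-reduced shape'' points this way, but you place it inside the exponential-sum estimates, where it cannot help. What must change is the action itself (equivalently, pull the multiplier back through a homomorphism $\mathbb{T}^\ell\to\mathbb{T}^k$ and transfer along the dual map), not the Weyl bounds.

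A second gap is your error accounting near the centres of the major arcs. Even in normal form, $\wh{M}_n(\boldsymbol\beta)$ is not a box indicator up to an error that is small uniformly in $n$. For $k=1$ and $r\ge2$ it equals $1$ whenever $0\le\ord\beta+rn\le r-2$. For $r=2$ ($p$ odd) and $\ord\beta=-2n+1$ it has modulus $q^{-1/2}$. So the frame part of $\wh{M}_n-\wh{C}_n$ has multiplier norm at least $1$ for every $n$, and $\sum_n\|\mathrm{error}_n\|_{\ell^2\to\ell^2}<\infty$ is false.

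What is needed is the frequency-wise bound $\sup_{\boldsymbol\alpha}\sum_n|\wh{M}_n(\boldsymbol\alpha)-\wh{C}_n(\boldsymbol\alpha)|^2\ll1$ on the major arcs. The paper gets this from the decay estimate $|\wh{M}_n(\boldsymbol\beta)|\ll q^{-c\Delta_n(\boldsymbol\beta)}$, with $\Delta_n(\boldsymbol\beta)=\max_i(\ord\beta_i+r_in)$ and no $q^{\varepsilon n}$ loss. It combines this with the fact that $\Delta_n(\boldsymbol\beta)$ increases by at least $1$ with each step in $n$. The decay estimate comes from applying L\^e--Liu--Wooley at scale $\asymp\Delta$ rather than $n$, together with the $\varepsilon$-free Weyl bound. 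It is a genuinely new input and belongs on your list next to the minor-arc bound.

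Finally, in the multi-frequency step the large scales begin at $n\gtrsim R_s=\deg Q_s=sq^s$, not at $n\ge s$. Only there do all degree-$s$ denominators divide the common modulus $Q_s$, so that $D_{s,n}$ factors through averages over $Q_s\F_q[t]^k$. The intermediate range is what costs the factor $\log(2+R_s)\ll_q s+1$, via Rademacher--Menshov. The rest of your outline matches the paper: decomposition by $\deg h$ with Gauss-sum decay, and monotone projections with logarithmic loss.
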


Observe that conclusion ($1$) implies conclusions ($2$) and ($3$); see Mirek--Szarek--Wright for details \cite{MirekSzarekWright2022Oscillation}. Conclusions ($2$) and ($3$) then imply conclusion ($4$) by the dominated convergence theorem. Since the averages in our case are naturally normalized by a lacunary sequence $(q^{-n})_{n\in\N}$, we are able to achieve an $L^2$ oscillation inequality which is analogous to the strongest $L^2$ oscillation inequality known over the integers; see Mirek--S{\l}omian--Szarek \cite{MirekSlomianSzarek2023OscillationRemarks}. We adapt the method of Quas--Wierdl \cite{Bergelson2006}, which also contains ideas from Bourgain and Lacey \cite{Lacey1997BourgainInequality}.

Our manuscript builds on the recent work of L\^e--Liu--Wooley \cite{LeLiuWooley2025} and Champagne--Ge--L\^e--Liu--Wooley \cite{ChampagneGeLeLiuWooley2025} towards quantitative Weyl sum estimates over function fields. In particular, we prove an $\varepsilon$-free version of the Weyl estimate of L\^e--Liu--Wooley \cite{LeLiuWooley2025}, and we use this to derive a sufficiently general major arc estimate. This $\varepsilon$-free estimate is of independent interest and has potential applications on problems related to exponential sums on $\F_q[t]$.

\section{Acknowledgments}
The second author was supported by the HUN-REN Alfr\'ed R\'enyi Institute of Mathematics (Erd\H{o}s Center) during the completion of this manuscript, and he thanks the R\'enyi Institute for its hospitality. The second author would also like to thank Tomasz Szarek for introducing him to pointwise ergodic theorems. The first author is supported by NSF Grant DMS-2246921 and a Travel Support for Mathematicians gift from the Simons Foundation.

\section{Notation}\label{notation}

We collect the notation used throughout the paper.
\begin{itemize}
\item $\N$ denotes the set of positive integers.
\item Let $q=p^m$, where $p$ is prime, and let
$\F_q$ denote the finite field with $q$ elements. We write $\F_q[t]$ for the polynomial ring over
$\F_q$, and we use the convention
\[
\deg 0=-\infty.
\]
Thus, for $N\ge 0$, the set
\[
\{f\in\F_q[t]:\deg f<N\}
\]
has cardinality $q^N$. All sums of the form $\sum_{\deg f<N}$ are taken over $f\in\F_q[t]$.

\item Let
\[
\mathbb K:=\F_q(t)
\qquad\text{and}\qquad
\mathbb K_\infty:=\F_q((1/t)).
\]
Thus every nonzero $\alpha\in\mathbb K_\infty$ has a unique Laurent expansion
\[
\alpha=\sum_{j\le J} a_jt^j,
\qquad a_J\neq 0,\quad a_j\in\F_q,
\]
and we define
\[
\ord \alpha:=J,
\qquad
\ord 0:=-\infty.
\]

\item We write
\[
\mathbb T:=\left\{\sum_{j\le -1}a_jt^j:a_j\in\F_q\right\}.
\]

\item Let $\operatorname{Tr}_{\F_q/\F_p}$ denote the field trace. If
\[
\alpha=\sum_{j\le J}a_jt^j\in\mathbb K_\infty,
\]
we write
\[
\operatorname{res}\alpha:=a_{-1},
\]
with the convention that $a_{-1}=0$ if no $t^{-1}$ term appears. Define the additive character
$e:\mathbb K_\infty\to\mathbb C^\times$ by
\[
e(\alpha):=\exp\left(\frac{2\pi i}{p}\operatorname{Tr}_{\F_q/\F_p}(\operatorname{res}\alpha)\right).
\]

Observe $e$ is trivial on $\F_q[t]$. In particular, if $\ord\{\alpha\}<-1$, then $e(\alpha)=1$. We shall repeatedly use the following orthogonality relation. For every $N\ge 0$ and every
$\alpha\in\mathbb K_\infty$,
\begin{equation}\label{eq:orthogonality}
\sum_{\deg f<n} e(f\alpha)
=
q^n\mathbf 1_{\ord\{\alpha\}<-n}.
\end{equation}

\item We write bold letters for vectors. Thus
\[
\boldsymbol{x}=(x_1,\ldots,x_k)\in\F_q[t]^k,
\qquad
\boldsymbol{\alpha}=(\alpha_1,\ldots,\alpha_k)\in\mathbb T^k,
\]
and
\[
\boldsymbol{x}\cdot\boldsymbol{\alpha}
:=
x_1\alpha_1+\cdots+x_k\alpha_k.
\]
If $\boldsymbol{a}=(a_1,\ldots,a_k)$ and $h\in\F_q[t]\setminus\{0\}$, then
\[
\frac{\boldsymbol{a}}{h}
:=
\left(\frac{a_1}{h},\ldots,\frac{a_k}{h}\right).
\]

\item For a finitely supported function $g:\F_q[t]^k\to\mathbb C$, we define its Fourier transform by
\[
\widehat g(\boldsymbol{\alpha})
:=
\sum_{\boldsymbol{x}\in\F_q[t]^k}
g(\boldsymbol{x})e(-\boldsymbol{x}\cdot\boldsymbol{\alpha}),
\qquad
\boldsymbol{\alpha}\in\mathbb T^k.
\]
Then Fourier inversion and Plancherel take the forms
\begin{equation}\label{eq:fourier-inversion}
g(\boldsymbol{x})
=
\int_{\mathbb T^k}
\widehat g(\boldsymbol{\alpha})
e(\boldsymbol{x}\cdot\boldsymbol{\alpha})
\,dm(\boldsymbol{\alpha}),
\end{equation}
and
\begin{equation}\label{eq:plancherel}
\sum_{\boldsymbol{x}\in\F_q[t]^k}|g(\boldsymbol{x})|^2
=
\int_{\mathbb T^k}|\widehat g(\boldsymbol{\alpha})|^2\,dm(\boldsymbol{\alpha}),
\end{equation}
where $m$ is the normalized Haar measure on $\mathbb T^k$. These identities extend in the usual way to $\ell^2(\F_q[t]^k)$. An operator $T$ on $\ell^2(\F_q[t]^k)$ is a Fourier multiplier when
\[
\widehat{Tg}(\boldsymbol{\alpha})
=
\widehat T(\boldsymbol{\alpha})\widehat g(\boldsymbol{\alpha}).
\]

\item If $T$ is a measure-preserving transformation of a measure space $(X, \mu)$, and $g$ is a measurable function on $X$, then $Tg$ is a function on $X$ defined by
\[
(Tg)(x) = g(Tx)
\]
for every $x \in X$.

\item If $a_1,\ldots,a_k,h\in\F_q[t]$ with $h\neq 0$, then
\[
(a_1,\ldots,a_k,h)
\]
denotes their monic greatest common divisor. Thus $(a_1,\ldots,a_k,h)=1$ means that these
polynomials have no common nonconstant divisor. Least common multiples are also taken to be
monic. Unless stated otherwise, rational vectors $\boldsymbol a/h$ are written with $h$ monic and
$\deg a_i<\deg h$ for every $i$.

\item We use Vinogradov notation in the standard way. Thus $A\ll B$ means that $|A|\le CB$ for some
constant $C>0$. Subscripts indicate the allowed dependence of the implicit constant; for instance,
$A\ll_{\mathcal K,q}B$ means that $C$ may depend on $\mathcal K$ and $q$.

\item Lastly, we use $\mathbf 1_E$ for the indicator function of a set $E$. We also use the shorthand
$\mathbf 1_{\mathcal P}$ for the indicator of a condition $\mathcal P$. 
\end{itemize}
\section{Sketch of proof}
The structure of our proof is inspired by Quas-Wierdl \cite[Appendix~B]{Bergelson2006}. To begin, it is convenient to use the Frobenius property to rewrite the iterates $T_{P_1(f)}^{(1)}\cdots T_{P_\ell(f)}^{(\ell)}$ in a normal form $S_{f^{r_1}}^{(1)}\cdots S_{f^{r_k}}^{(k)}$, where $(r_i,p)=1$ for $1\leq i\leq k$. Recall that the goal is to show that the sequence of operators $(A_n)_{n\in \N}$ defined by 
\[
A_ng(x):=\frac{1}{q^n}\sum_{\deg f<n}g(S_{f^{r_1}}^{(1)}\cdots S_{f^{r_k}}^{(k)}x)
\]
satisfies a uniform oscillation inequality, which in our case takes the form 
\[
\sup_{\substack{n_1<\cdots <n_{t_0}\\ t_0\in \N}}
\left(
\int_X
\sum_{j=1}^{t_0-1}
\sup_{n_j\leq n<n_{j+1}}
|A_ng(x)-A_{n_{j+1}}g(x)|^2
\,d\mu(x)
\right)^{1/2}
\leq C\|g\|_{L^2(X)},
\]
where $C$ is a constant depending on $r_1,...,r_k$ and $q$. This implies that $\lim_{n\to \infty} A_n f(x)$ exists for almost every $x\in X$. 

We use a transference argument to show that it suffices to prove the oscillation inequality for the operator $M_n$ on $\ell^2(\F_q[t]^k)$ defined by 
\[
M_n g(x_1,...,x_k)=\frac{1}{q^n}\sum_{\deg f<n}g(x_1+f^{r_1},...,x_k+f^{r_k}).
\]
A critical observation here is that the sum above is in the form of a convolution, and thus it is not hard to see that $M_n$ is a Fourier multiplier so that in particular 
\[
\widehat{M_ng}(\alpha_1,...,\alpha_k)=\widehat{M_n}(\alpha_1,...,\alpha_k)\widehat{g}(\alpha_1,...,\alpha_k).
\]
Additionally, $\widehat{M_n}$ is a polynomial exponential sum over $\F_q[t]$: 
\[
\widehat{M_n}(\alpha_1,...,\alpha_k)=\frac{1}{q^n}\sum_{\deg f<n}e(\alpha_1f^{r_1}+\cdots+\alpha_k f^{r_k}).
\]
At this point we note that because each $r_i$ is coprime to $p$, the exponential sum estimates of Lê-Liu-Wooley \cite{LeLiuWooley2025} apply cleanly to the sum above. This is the advantage of the reduction to the normal form at the beginning of the argument.  

Armed with the exponential sum estimates of \cite{LeLiuWooley2025}, we prove that the sequence of operators $(M_n)$ is equivalent to a sequence of operators $(C_n)$ which we call the major arc model. (See Definition \ref{operatorequivalence} for the definition of equivalence.) In particular, $C_n$ is defined by 
\[
\widehat{C}_n(\alpha_1,...,\alpha_k)
=
\sum_{\substack{h\ \mathrm{monic}\\ \deg h<\rho n}}
\sum_{\substack{a_1,...,a_k\in\F_q[t]\\ \deg a_i<\deg h\ \forall i\\(a_1,...,a_k,h)=1}}
\Lambda(a_1,...,a_k,h)
\1_{\ord(\alpha_i-a_i/h)<-r_i n\ \forall 1\leq i\leq k},
\]
where $\Lambda(a_1,...,a_k,h)$ is a Gauss sum and $\rho$ is a suitable constant. Notice that $\widehat{C_n}$ is supported on very narrow boxes. We achieve this through a pruning procedure, for which we required a sufficiently strong major arc estimate for $\widehat{M_n}$. Over the integers this estimate is standard, but the analogous estimate did not exist in the function field setting at the required level of generality. In the appendix, we achieve this major arc estimate by repeated applications of the Weyl estimate \cite[Theorem~3.1]{LeLiuWooley2025}.

In addition, our notion of equivalence implies that if $(C_n)$ satisfies the oscillation inequality, then $(M_n)$ satisfies the oscillation inequality as well. Thus, the problem reduces to showing $(C_n)$ satisfies the oscillation inequality. The strategy now is to organize the sum defining $\wh{C}_n$ based on the degree of $h$, and define operators $C_{s,n}$ given by 
\[
\widehat{C}_{s,n}(\alpha_1,...,\alpha_k)
=
\1_{s<\rho n}
\sum_{\substack{h\ \mathrm{monic}\\ \deg h=s}}
\sum_{\substack{a_1,...,a_k\in\F_q[t]\\ \deg a_i<\deg h\ \forall i\\(a_1,...,a_k,h)=1}}
\Lambda(a_1,...,a_k,h)
\1_{\ord(\alpha_i-a_i/h)<-r_i n\ \forall 1\leq i\leq k}.
\]
For a fixed $s$, the sequence $(C_{s,n})$ is almost a monotone sequence of projections. In particular, for fixed $s$, there is a monotone sequence of projections $(D_{s,n})$ on $\ell^2(\F_q[t]^k)$ such that for any $g\in \ell^2(\F_q[t]^k)$, there is a $w_s\in \ell^2(\F_q[t]^k)$ satisfying
\[
C_{s,n}g=D_{s,n}w_s
\]
for every $n$, with
\[
\|w_s\|_{\ell^2(\F_q[t]^k)}\ll q^{-\gamma s}\|g\|_{\ell^2(\F_q[t]^k)}.
\]
With this in mind, it suffices to show that $D_{s,n}$ satisfies an oscillation inequality with at most a logarithmic loss. Here we also use the fact that we have an exponential bound on the Gauss sum:
\[
|\Lambda(a_1,...,a_k,h)|\ll q^{-\gamma s}
\qquad (\deg h=s).
\]

Finally, since $D_{s,n}$ is a monotone sequence of projections, proving the oscillation inequality for $D_{s,n}$ requires that $D_{s,n}$ satisfies a maximal inequality of the form 
\[
\|D_s^{\ast}g\|_{\ell^2(\F_q[t]^k)}
\ll \log(2+R_s)\|g\|_{\ell^2(\F_q[t]^k)},
\]
where
\[
R_s=\deg Q_s,
\qquad
Q_s=\prod_{\substack{h\ \mathrm{monic}\\\deg h =s}}h.
\]
We achieve this by cases based on the size of $n$ (the so-called small scales and large scales). We handle the small scales by a simple Rademacher-Menshov type argument. On the other hand, the argument over the large scales quickly reduces to linear ergodic maximal estimate, which we prove via a Vitali-type covering argument and Marcinkiewicz interpolation.

\section{Reduction to the normal form}
Our first goal is to replace arbitrary polynomials appearing in the statement of Theorem \ref{unweightedPW_0} by monomials whose exponents are coprime to the characteristic. This is the natural normal form for the argument that follows, and in particular it is the form to which the exponential sum estimates apply cleanly. The idea behind this reduction is inspired by \cite[Section~3]{ChampagneGeLeLiuWooley2025}. 

\begin{lem}\label{preliminaryreduction}
Suppose $T^{(1)},...,T^{(\ell)}$ are commuting, measure preserving $\F_q[t]$-actions on a measure space $(X,\mu)$, and let $P_1,...,P_\ell\in \F_q[t][u]\setminus\{0\}$ satisfy $P_j(0)=0$ for every $1\le j\le \ell$. Then there exist an integer $k\geq 1$, integers $1\le r_1<\cdots<r_k$ satisfying $(r_i,p)=1$ for every $1\le i\le k$, and commuting measure preserving $\F_q[t]$-actions $S^{(1)},...,S^{(k)}$ on $(X,\mu)$ such that for every $f\in \F_q[t]$,
\begin{equation}\label{CoprimeNested}
T^{(1)}_{P_1(f)}\cdots T^{(\ell)}_{P_\ell(f)}=S_{f^{r_1}}^{(1)}\cdots S_{f^{r_k}}^{(k)}.
\end{equation}
\end{lem}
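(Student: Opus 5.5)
Write $P_j(u)=\sum_{d=1}^{D} c_{j,d}u^d$ with $c_{j,d}\in\F_q[t]$; the sum starts at $d=1$ because $P_j(0)=0$. Each exponent $d\ge 1$ factors uniquely as $d=p^{v}r$ with $(r,p)=1$ and $v\ge 0$. Let $r_1<\cdots<r_k$ be the distinct values of $r$ that occur among the pairs $(j,d)$ with $c_{j,d}\neq 0$. Then $k\ge 1$, since each $P_j$ is nonzero. The plan is to regroup all the iterates according to $r$.

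Step 1 is the Frobenius identity. Since $\F_q[t]$ has characteristic $p$, the map $x\mapsto x^{p}$ is an additive ring endomorphism. Hence
\[
c\,f^{p^v r}=c\,(f^{r})^{p^v}=\phi_{c,v}(f^{r}),
\qquad \phi_{c,v}(x):=c\,x^{p^v},
\]
and $\phi_{c,v}:\F_q[t]\to\F_q[t]$ is additive. Step 2 uses this to compose actions. If $T$ is a measure-preserving $\F_q[t]$-action and $\phi$ is an additive endomorphism of $\F_q[t]$, then $T^{\phi}_a:=T_{\phi(a)}$ is again a measure-preserving $\F_q[t]$-action. Indeed, $T^\phi_0=T_0=\mathrm{id}$, and $T^\phi_{a+b}=T_{\phi(a)+\phi(b)}=T^\phi_aT^\phi_b$; measure preservation is inherited from $T$.

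Step 3 defines the new actions. For each $1\le i\le k$ set
\[
S^{(i)}_a:=\prod_{j=1}^{\ell}\ \prod_{\substack{v\ge 0\\ c_{j,p^v r_i}\neq 0}} T^{(j)}_{c_{j,p^v r_i}a^{p^v}}.
\]
This is a finite product of commuting transformations, so the order does not matter. Each factor $a\mapsto T^{(j)}_{\phi(a)}$ is an $\F_q[t]$-action by Step 2. A product of pairwise commuting $\F_q[t]$-actions is again an $\F_q[t]$-action: $S^{(i)}_{a+b}$ splits factor by factor, and commutativity lets us rearrange the product into $S^{(i)}_aS^{(i)}_b$. A composition of measure-preserving maps is measure-preserving. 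The $S^{(i)}$ commute with each other because every factor is some $T^{(j)}_c$, and all of these commute.

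Step 4 verifies \eqref{CoprimeNested}. Using $T_{a+b}=T_aT_b$,
\[
T^{(j)}_{P_j(f)}=\prod_{d:\,c_{j,d}\neq 0}T^{(j)}_{c_{j,d}f^d}.
\]
Taking the product over $j$ and grouping the factors by $r_i$, where $d=p^v r_i$, gives exactly $\prod_i S^{(i)}_{f^{r_i}}$, because $c_{j,d}f^d=c_{j,d}(f^{r_i})^{p^v}$.

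There is no real obstacle here. The only point needing care is checking that $a\mapsto c\,a^{p^v}$ is additive, which is the Frobenius property in characteristic $p$. This is what makes $S^{(i)}$ an $\F_q[t]$-action, and it is the reason the argument is specific to function fields. We also need the convention that the exponents $r_i$ are distinct and listed in increasing order, which holds by construction.
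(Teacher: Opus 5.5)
Your proposal is correct and follows essentially the same route as the paper: split each $P_j$ into monomials, group them by the $p$-free part $r$ of the exponent, and use the Frobenius additivity of $a\mapsto c\,a^{p^v}$ together with commutativity to make each grouped product $S^{(i)}$ a commuting, measure-preserving $\F_q[t]$-action. Your Step 2, which packages the construction as ``an action precomposed with an additive endomorphism is an action,'' is a slightly cleaner way to organize the same verification.
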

\begin{proof}
For each $1\le j\le \ell$, write
\[
P_j(u)=\sum_{\nu=1}^{N_j} a_{j,\nu}u^{e_{j,\nu}},
\]
where $N_j\ge 1$, $a_{j,\nu}\in \F_q[t]$, and $e_{j,\nu}\ge 1$. Since each $T^{(j)}$ is an $\F_q[t]$-action, we have
\[
T^{(j)}_{P_j(f)}=\prod_{\nu=1}^{N_j}T^{(j)}_{a_{j,\nu}f^{e_{j,\nu}}}
\]
for every $1\le j\le \ell$. Using also that the actions $T^{(1)},\ldots,T^{(\ell)}$ commute, we may reorder all of the monomial factors and group together those whose exponents have the same $p$-free part. Thus it is enough to consider a subproduct of the form
\[
T^{(j_1)}_{a_1f^{e_1}}\cdots T^{(j_m)}_{a_mf^{e_m}},
\]
where $a_1,\ldots,a_m\in \F_q[t]$ and $e_i=p^{n_i}r$ for every $1\le i\le m$, for some fixed integer $r$ coprime to $p$. Since $\mathbb{F}_q$ has characteristic $p$, each map $u\mapsto a_i u^{p^{n_i}}$
is additive on $\F_q[t]$ by the Frobenius property. We therefore define
\[
S_u:=T^{(j_1)}_{a_1u^{p^{n_1}}}\cdots T^{(j_m)}_{a_mu^{p^{n_m}}}.
\]
Because the original actions commute, $S$ is an $\F_q[t]$-action; indeed, for $u,v\in \F_q[t]$,
using $(u+v)^{p^{n_i}}=u^{p^{n_i}}+v^{p^{n_i}}$ for every $i$ and then reordering commuting factors, we get $S_{u+v}=S_uS_v.$
Moreover, each $S_u$ is a composition of measure preserving transformations, and is therefore itself measure preserving. Finally, by design we have 
\[
S_{f^r}=T^{(j_1)}_{a_1f^{e_1}}\cdots T^{(j_m)}_{a_mf^{e_m}}.
\]
There are only finitely many distinct $p$-free parts occurring among the exponents in $P_1,\ldots,P_\ell$; enumerate them as $1\le r_1<\cdots<r_k$. Performing the above construction for each of them yields commuting measure preserving $\F_q[t]$-actions $S^{(1)},...,S^{(k)}$ and hence the representation \eqref{CoprimeNested}. Moreover, if we have $S^{(i)}$, $S^{(j)}$ and $u,v\in \F_{q}[t]$, then every factor in $S^{(i)}_u$ commutes with every factor in $S^{(j)}_v$, by the commutativity hypothesis on the original $\F_q[t]$-actions. Hence the constructed $\F_q[t]$-actions $S^{(1)},...,S^{(k)}$ commute.
\end{proof}

Combining Lemma~\ref{preliminaryreduction} with the harmless removal of constant terms explained below, Theorem~\ref{unweightedPW_0} reduces to the following.
\begin{thm}\label{unweightedPW}
Let $S^{(1)},...,S^{(k)}$ be commuting measure preserving $\F_q[t]$-actions on a measure space $(X,\mu)$. Let $r_1<r_2<\cdots<r_k$ be positive integers coprime to $p$. Let $g\in L^2(X)$. Then,
\[\lim_{n\to\infty} \frac{1}{q^n}\sum_{
\deg f<n}g\big(S_{f^{r_1}}^{(1)}\cdots S_{f^{r_k}}^{(k)}x\big)
\;
\text{ exists for almost every } x\in X.\]
\end{thm}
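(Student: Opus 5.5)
We follow the outline in the sketch section: transfer to $\ell^2(\F_q[t]^k)$, replace the multiplier by a major-arc model, and prove a uniform oscillation inequality for that model. Pointwise convergence then follows from the oscillation inequality.

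\textbf{Transference.} We first prove the uniform oscillation inequality for the averages $A_n$ of Theorem~\ref{unweightedPW}, since oscillation implies a.e.\ convergence. By a Calder\'on-type transference, it suffices to bound the oscillation seminorm of
\[
M_ng(\boldsymbol x)=q^{-n}\sum_{\deg f<n}g(x_1+f^{r_1},\ldots,x_k+f^{r_k})
\]
on $\ell^2(\F_q[t]^k)$ with a constant independent of the chosen sequence $n_1<\cdots<n_{t_0}$. For fixed $x$, set $\phi(\boldsymbol y)=g(S^{(1)}_{y_1}\cdots S^{(k)}_{y_k}x)\mathbf 1_{\deg y_i<D_i}$, with $D_i$ much larger than $r_iN$. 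Apply the discrete inequality to $\phi$, average over $x$ using measure preservation and commutativity, and let $D_i\to\infty$. The $\sigma$-finiteness lets us restrict to finite-measure pieces.

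\textbf{Fourier analysis.} $M_n$ has multiplier $\widehat{M_n}(\boldsymbol\alpha)=q^{-n}\sum_{\deg f<n}e(\sum\alpha_if^{r_i})$. Because $(r_i,p)=1$, the Weyl bound of L\^e--Liu--Wooley applies. Off the major arcs of height $q^{\rho n}$ it gives $|\widehat{M_n}|\ll q^{-\delta n}$. On the major arcs we need an $\varepsilon$-free approximation
\[
\widehat{M_n}(\boldsymbol\alpha)=\Lambda(\boldsymbol a,h)\,\widehat{J_n}(\boldsymbol\alpha-\boldsymbol a/h)+O(q^{-\delta n}),
\]
where $\widehat{J_n}$ is the continuous-type integral. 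In the non-archimedean setting $\widehat{J_n}(\boldsymbol\beta)$ equals $1$ on the box $\ord\beta_i<-r_in$ and is small beyond it. We would derive this by iterating the Weyl estimate, and we expect it to be the \textbf{main obstacle}. The error terms are square-summable in $n$ (they are geometric), so they contribute to the oscillation norm only through $\sum_n\|\cdot\|_{\ell^2\to\ell^2}$. The pruning step then gives equivalence with the model $C_n$.

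\textbf{Oscillation for the model.} Split $C_n=\sum_s C_{s,n}$ by $\deg h=s$. For fixed $s$, the boxes around distinct $\boldsymbol a/h$ with $\deg h=s$ are disjoint once $n>2s$, so $C_{s,n}=D_{s,n}w_s$. Here $\widehat{w_s}=\sum\Lambda\,\mathbf 1_{\text{box at scale }2s}\,\widehat g$, so $\|w_s\|\ll q^{-\gamma s}\|g\|$. The operators $D_{s,n}$ are nested projections onto unions of shrinking boxes, that is, conditional expectations twisted by characters. For nested projections, oscillation is controlled by a maximal function through a Lacey/Quas--Wierdl type argument. The maximal estimate comes in two ranges. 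For $n\le$ about $\log R_s$ we use Rademacher--Menshov, losing $\log\log$ factors. For larger $n$ we factor through $Q_s$: the projection becomes a lattice average along $Q_s\F_q[t]^k$-cosets composed with a fixed multiplier, which reduces to a martingale-type (Vitali covering) maximal inequality over ultrametric balls.

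Summing over $s$, the factor $q^{-\gamma s}\log(2+R_s)$ is summable since $R_s\le sq^s$. The small scales $s\ge\rho n$ are handled by the indicator. This completes the oscillation inequality and hence the a.e.\ limit. Finally, reinstate the constant terms $P_j(0)$: they contribute a fixed measure-preserving composition, which commutes with the averaging and does not affect convergence.
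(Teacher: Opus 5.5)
Your outline follows the paper's route step for step: Calder\'on transference to $M_n$ on $\ell^2(\F_q[t]^k)$, pruning to the major-arc model $C_n=\sum_s C_{s,n}$, writing $C_{s,n}=D_{s,n}w_s$ with $\|w_s\|\ll q^{-\gamma s}\|g\|$, the Quas--Wierdl reduction of the oscillation of the monotone projections $D_{s,n}$ to a maximal inequality, and a small/large-scale split. However, the scale split does not close as you state it. The large-scale factorization $f_i=Q_su_i+b_i$, with $\deg b_i<R_s$ and $\deg u_i<r_in-R_s$, exists only when $r_in\ge R_s=\deg Q_s$. If $r_in<R_s$ for some $i$, no nonzero multiple of $Q_s$ has degree $<r_in$, so there is no lattice average and no $n$-independent multiplier to pull out. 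Hence the scales between roughly $\log R_s$ and $R_s/r_1$ are covered by neither of your arguments. You have imported the integer normalization, where a dyadic scale $2^n$ is compared with a modulus $Q$, so small scales are $n\lesssim\log Q$ with a $\log\log Q$ loss. Over $\F_q[t]$ the parameter $n$ is already a degree and must be compared with $R_s$ itself. The fix is to run Rademacher--Menshov over all $N_s\le n<R_s^4$, as the paper does, at a cost of $\log(2+R_s)$ rather than a $\log\log$ factor. Since $R_s=sq^s$, this is still absorbed by $q^{-\gamma s}$, and your final summation already uses exactly this factor.

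Second, the pruning step needs a quantitative form of ``$\widehat{J_n}$ is small beyond the box''. This, not the major-arc approximation, is where the real work lies. What is required is $|\wh M_n(\boldsymbol\beta)|\ll q^{-c\Delta_n(\boldsymbol\beta)}$, with $\Delta_n(\boldsymbol\beta)=\max_i(\ord\beta_i+r_in)$, uniformly for $0\le\Delta_n(\boldsymbol\beta)<n/2$. Since $\Delta_n$ increases by at least $1$ with $n$, the frame contributions at a fixed $\boldsymbol\alpha$ then sum geometrically. Decay in $n$ is not available near the box: for $k=1$, $r_1=2$ and $\ord\beta=-2n+1$, a quadratic Gauss sum over $\F_q$ gives $|\wh M_n(\beta)|=q^{-1/2}$. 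Applying the L\^e--Liu--Wooley estimate at scale $n$ loses a factor $q^{\varepsilon n}$, which swamps $q^{-c\Delta}$ whenever $\Delta$ is small relative to $n$. The missing idea is to apply that estimate at scale $m\asymp\Delta$ after writing $f=z+t^{n-m}y$ (Proposition \ref{smallbeta_inverse}). One then bootstraps to an $\varepsilon$-free Weyl estimate and passes to the non-maximal exponents by downward induction (Theorem \ref{epsilonfree31}, Corollaries \ref{epsilonfree43} and \ref{smallbeta_inverse_ultimate}). The step you flag as the main obstacle is in fact exact and elementary. On $N_n(\mathbf a,h)$ one has $\wh M_n(\boldsymbol\alpha)=\Lambda(\mathbf a,h)\wh M_n(\boldsymbol\alpha-\mathbf a/h)$ with no error term (Proposition \ref{prop:major-arc}), so one may simply take $\widehat{J_n}=\wh M_n$ there.
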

\begin{remark}
Notice that to apply Proposition \ref{preliminaryreduction} we require the polynomials to each have $0$ constant term. This turns out to be harmless for the following reason. Suppose $P_j\in \F_q[t][u]$ for $1\le j\le \ell$ are the polynomials appearing in Theorem \ref{unweightedPW_0}, and write $P_j(u)=Q_j(u)+a_j$,
with $Q_j(0)=0$ and $a_j\in \F_q[t]$. We write
\[
\{j\in\{1,\ldots,\ell\}:Q_j\neq 0\}=\{j_1<\cdots<j_m\}.
\]
Define $g_0:X\to \mathbb{C}$ by 
\[
g_0(x):=g\big(T^{(1)}_{a_1}\cdots T^{(\ell)}_{a_\ell}x\big).
\]
Since the actions are commuting and measure preserving, we have $g_0\in L^2(X)$ and
\[
g\big(T^{(1)}_{P_1(f)}\cdots T^{(\ell)}_{P_\ell(f)}x\big)
=g_0\big(T^{(j_1)}_{Q_{j_1}(f)}\cdots T^{(j_m)}_{Q_{j_m}(f)}x\big),
\]
where the right-hand side is interpreted as $g_0(x)$ when $m=0$. If $m=0$, then the averages are identically equal to $g_0(x)$ and there is nothing to prove. Otherwise, the constant terms may be absorbed into $g$, the indices with $Q_j=0$ may be discarded, and it is enough to treat the case of nonzero polynomials with zero constant term covered by Lemma \ref{preliminaryreduction}.
\end{remark}
\section{The oscillation inequality}
Given a sequence $(a_n)_{n\in\N}$ of complex numbers and positive integers
$n_1<\cdots<n_{t_0}$, define the oscillation functional $\mathcal{O}_{n_1,\ldots,n_{t_0}}$ by 
\begin{equation}\label{osc}
\mathcal{O}_{n_1,\ldots,n_{t_0}}(a_n)
:=
\left(
\sum_{j=1}^{t_0-1}
\sup_{n_j\le n < n_{j+1}}
\big|a_n-a_{n_{j+1}} \big|^2
\right)^{1/2}.
\end{equation}
The oscillation functional is a standard object in the study of pointwise ergodic theorems. We list the known facts and definitions that we will need. We omit the proofs. (See \cite{MirekSzarekWright2022Oscillation} for details).

\begin{defn}
We say that a sequence of operators $(A_n)$ on $L^2(X,\mu)$ satisfies the \textit{oscillation inequality} if there exists a constant $C>0$ such that, for every $f\in L^2(X)$ and every finite increasing sequence
$n_1<\cdots<n_{t_0}$ of positive integers, one has
\[
\|\mathcal{O}_{n_1,\ldots,n_{t_0}}(A_n f)\|_{L^2(X)}\le C\|f\|_{L^2(X)}.
\]
\end{defn}

\begin{lem}\label{oscImpliesConv}
Suppose $(A_n)$ is a sequence of operators on $L^2(X)$ which satisfies the oscillation inequality. Then $A_nf(x)$ converges as $n\to\infty$ for almost every $x\in X$. 
\end{lem}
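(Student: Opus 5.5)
We argue by contradiction, following the standard scheme in \cite{MirekSzarekWright2022Oscillation}: a uniform bound on oscillations rules out divergence on a set of positive measure, because divergence would produce arbitrarily many disjoint jumps of a fixed size. Fix $f\in L^2(X)$. It suffices to treat a measurable set $X_0\subseteq X$ of finite measure, since $X$ is $\sigma$-finite and countable unions of null sets are null. Suppose the set
\[
E:=\Bigl\{x\in X_0:\ \limsup_{m,n\to\infty}|A_mf(x)-A_nf(x)|>0\Bigr\}
\]
has positive measure. Then for some $\varepsilon>0$ the set $E_\varepsilon:=\{x\in X_0:\limsup_{m,n}|A_mf(x)-A_nf(x)|>4\varepsilon\}$ has measure $\delta>0$. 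For $x\in E_\varepsilon$ and every $N$ we have $\sup_{N\le m<n}|A_mf(x)-A_nf(x)|>4\varepsilon$.

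The key step is a greedy construction of a single finite sequence $n_1<\cdots<n_{t_0}$ that works simultaneously for most of $E_\varepsilon$, so that
\[
\sum_{j=1}^{t_0-1}\sup_{n_j\le n<n_{j+1}}|A_nf(x)-A_{n_{j+1}}f(x)|^2\ \gtrsim\ J\varepsilon^2
\]
on a set of measure at least $\delta/2$, where $J$ is an arbitrary prescribed integer. We build the blocks inductively. Given $n_j$, we use continuity of measure (monotone convergence in the upper endpoint) to choose $n_{j+1}$ so large that, outside a set of measure $\delta 2^{-j-2}$ in $E_\varepsilon$, there exist $n_j\le m<m'<n_{j+1}$ with $|A_mf(x)-A_{m'}f(x)|>2\varepsilon$. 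The point is to ensure that each block $[n_j,n_{j+1})$ contains an $\varepsilon$-jump relative to its right endpoint. By the triangle inequality,
\[
|A_mf-A_{m'}f|\le|A_mf-A_{n_{j+1}}f|+|A_{m'}f-A_{n_{j+1}}f|,
\]
so either $m$ or $m'$ yields $\sup_{n_j\le n<n_{j+1}}|A_nf(x)-A_{n_{j+1}}f(x)|>\varepsilon$. Intersecting over $j\le J$ gives a set $G\subseteq E_\varepsilon$ with $\mu(G)\ge\delta/2$ on which every one of the $J$ blocks contributes at least $\varepsilon^2$.

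Integrating over $G$ and applying the oscillation inequality gives
\[
J\varepsilon^2\,\frac{\delta}{2}\ \le\ \|\mathcal O_{n_1,\dots,n_{J+1}}(A_nf)\|_{L^2(X)}^2\ \le\ C^2\|f\|_{L^2(X)}^2 .
\]
Letting $J\to\infty$ contradicts this bound, so $E$ is null, and $(A_nf(x))$ is Cauchy, hence convergent, almost everywhere.

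The main obstacle is the inductive choice of $n_{j+1}$, which needs measurability and a.e. finiteness of the functions involved. Each $A_nf$ is measurable and a.e. finite as an element of $L^2$. The quantities $\sup_{n_j\le m<m'<N}|A_mf-A_{m'}f|$ are then measurable and increase in $N$ to a limit exceeding $4\varepsilon$ on $E_\varepsilon$. Continuity from below on the finite-measure set $X_0$ therefore supplies the required $n_{j+1}$, using the slack between $4\varepsilon$ and $2\varepsilon$.
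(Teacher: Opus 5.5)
Your argument is correct and is essentially the standard one the paper relies on (the paper omits the proof and defers to \cite{MirekSzarekWright2022Oscillation}). You choose block endpoints $n_1<n_2<\cdots$ inductively by continuity of measure, so that on most of $E_\varepsilon$ each block $[n_j,n_{j+1})$ contains a jump of size $>\varepsilon$ relative to $A_{n_{j+1}}f$; you then integrate the resulting lower bound $J\varepsilon^2$ against the uniform oscillation bound and let $J\to\infty$. As a minor simplification, in $L^2$ you do not need to intersect the good sets over $j\le J$: if each block's good set has measure at least $\delta/2$, summing $\varepsilon^2$ times these measures over the $J$ blocks already gives a lower bound growing linearly in $J$.
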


The oscillation functional also satisfies a triangle inequality. In other words, $\mathcal{O}_{n_1,\ldots,n_{t_0}}$ is a subadditive functional.

\begin{lem}\label{oscTriangle}
For any two complex sequences $(a_n)$ and $(b_n)$ and any finite increasing sequence of positive integers $n_1<\cdots<n_{t_0}$, one has
\[
\mathcal{O}_{n_1,\ldots,n_{t_0}}(a_n+b_n)
\leq \mathcal{O}_{n_1,\ldots,n_{t_0}}(a_n)+\mathcal{O}_{n_1,\ldots,n_{t_0}}(b_n).
\]
\end{lem}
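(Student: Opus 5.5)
The plan is to reduce the statement to the triangle inequality for the $\ell^2$ norm on $\mathbb{C}^{t_0-1}$, after first applying the triangle inequality for the supremum inside each block. Fix $n_1<\cdots<n_{t_0}$ and write $c_n:=a_n+b_n$. For each $1\le j\le t_0-1$ we set
\[
u_j:=\sup_{n_j\le n<n_{j+1}}|a_n-a_{n_{j+1}}|,\qquad v_j:=\sup_{n_j\le n<n_{j+1}}|b_n-b_{n_{j+1}}|,\qquad w_j:=\sup_{n_j\le n<n_{j+1}}|c_n-c_{n_{j+1}}|.
\]
Each of these suprema is over a finite set of indices, so they are finite nonnegative reals. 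By definition, $\mathcal{O}(a_n)=\|(u_j)_j\|_{\ell^2}$, and likewise $\mathcal{O}(b_n)=\|(v_j)_j\|_{\ell^2}$ and $\mathcal{O}(a_n+b_n)=\|(w_j)_j\|_{\ell^2}$.

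The first step is a pointwise bound for each $j$. For every $n$ with $n_j\le n<n_{j+1}$, the triangle inequality in $\mathbb{C}$ gives
\[
|c_n-c_{n_{j+1}}|\le |a_n-a_{n_{j+1}}|+|b_n-b_{n_{j+1}}|\le u_j+v_j.
\]
Taking the supremum over $n$ in this block yields $0\le w_j\le u_j+v_j$.

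The second step uses monotonicity of the $\ell^2$ norm on nonnegative vectors together with Minkowski's inequality in $\mathbb{C}^{t_0-1}$:
\[
\Big(\sum_j w_j^2\Big)^{1/2}\le \Big(\sum_j (u_j+v_j)^2\Big)^{1/2}\le \Big(\sum_j u_j^2\Big)^{1/2}+\Big(\sum_j v_j^2\Big)^{1/2}.
\]
This is exactly the claimed inequality.

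No step here is a real obstacle. The only point needing care is that taking the supremum in $n$ must come before passing to the $\ell^2$ sum, and the pointwise bound in the first step ensures exactly this. The same argument applied pointwise in $x$ gives the subadditivity of $\|\mathcal{O}_{n_1,\ldots,n_{t_0}}(A_n f)\|_{L^2(X)}$ in $f$, because Minkowski's inequality also holds in $L^2(X)$.
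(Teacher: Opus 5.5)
Your proof is correct and complete. Bounding each block supremum by $u_j+v_j$ via the triangle inequality in $\mathbb{C}$, then applying Minkowski's inequality in $\ell^2$ over the blocks, is the standard argument. The paper does not prove this lemma itself (it omits the proof and refers to Mirek--Szarek--Wright), so there is nothing further to compare.
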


The next notion will let us replace one operator sequence by another without changing the
validity of an oscillation inequality.

\begin{defn}\label{operatorequivalence}
Let $(X,\mu)$ be a measure space, and let $(A_n)_{n\in\N}$ and $(B_n)_{n\in\N}$ be sequences of
linear operators on $L^2(X,\mu)$. We say that $(A_n)$ and $(B_n)$ are \emph{equivalent on $L^2(X,\mu)$} if, for every $f\in L^2(X,\mu)$,
\[
\sum_{n=1}^\infty \|A_nf-B_nf\|_{L^2(X,\mu)}^2\ll \|f\|_{L^2(X,\mu)}^2.
\]
\end{defn}

\begin{lem}\label{equivalenceOscLemma}
Let $(X,\mu)$ be a $\sigma$-finite measure space, and let $(A_n)_{n\in\N}$ and $(B_n)_{n\in\N}$ be equivalent
sequences of linear operators on $L^2(X,\mu)$. If $(B_n)$ satisfies the oscillation inequality, then $(A_n)$ also satisfies the oscillation inequality. 
\end{lem}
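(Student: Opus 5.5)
The proof will write $A_nf = B_nf + (A_nf - B_nf)$ and apply the subadditivity of Lemma~\ref{oscTriangle} pointwise in $x$. This gives
\[
\mathcal{O}_{n_1,\ldots,n_{t_0}}(A_nf)\le \mathcal{O}_{n_1,\ldots,n_{t_0}}(B_nf)+\mathcal{O}_{n_1,\ldots,n_{t_0}}(E_nf),
\qquad E_n:=A_n-B_n .
\]
Taking $L^2(X)$ norms, the first term is bounded by $C\|f\|_{L^2(X)}$, because $(B_n)$ satisfies the oscillation inequality. It therefore suffices to bound the second term by a constant multiple of $\|f\|_{L^2(X)}$, uniformly over all choices of $n_1<\cdots<n_{t_0}$.

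For the error term we use a crude pointwise bound. For each $j$ and each $n_j\le n<n_{j+1}$,
\[
|E_nf(x)-E_{n_{j+1}}f(x)|^2\le 2|E_nf(x)|^2+2|E_{n_{j+1}}f(x)|^2 .
\]
Taking the supremum over $n$ in $[n_j,n_{j+1})$ and bounding it by the sum, we get
\[
\sup_{n_j\le n<n_{j+1}}|E_nf(x)-E_{n_{j+1}}f(x)|^2\le 2\sum_{n_j\le n<n_{j+1}}|E_nf(x)|^2+2|E_{n_{j+1}}f(x)|^2 .
\]
Now sum over $j$. The intervals $[n_j,n_{j+1})$ are disjoint, and the points $n_{j+1}$ are distinct, so each index $n$ appears at most twice in total. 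This yields the pointwise bound
\[
\mathcal{O}_{n_1,\ldots,n_{t_0}}(E_nf)(x)^2\le 4\sum_{n=1}^\infty |E_nf(x)|^2 .
\]

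Integrate over $X$ and interchange sum and integral by Tonelli, which applies since every term is nonnegative. The equivalence hypothesis in Definition~\ref{operatorequivalence} then gives
\[
\|\mathcal{O}_{n_1,\ldots,n_{t_0}}(E_nf)\|_{L^2(X)}^2\le 4\sum_{n}\|A_nf-B_nf\|_{L^2(X)}^2\ll\|f\|_{L^2(X)}^2 .
\]
This bound is independent of the sequence $n_1<\cdots<n_{t_0}$, which completes the argument.

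There is no real obstacle. The only points needing care are measurability of the supremum, which holds because it is taken over finitely many $n$, and the bookkeeping that each index is counted at most twice.
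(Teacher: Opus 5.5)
Your proof is correct. Subadditivity of $\mathcal{O}_{n_1,\ldots,n_{t_0}}$ splits off the error $E_n=A_n-B_n$. Your pointwise bound $\mathcal{O}_{n_1,\ldots,n_{t_0}}(E_nf)^2\le 4\sum_n|E_nf|^2$ (each index counted at most twice), combined with Tonelli and the equivalence hypothesis, then gives a bound uniform in $n_1<\cdots<n_{t_0}$.

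The paper does not prove this lemma; it omits the proof and cites Mirek--Szarek--Wright. Your argument is the standard one for this fact, so there is nothing to add.
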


In the course of the proof of Theorem \ref{unweightedPW}, we will prove the following.

\begin{thm}\label{oscillationinequality}
Let $r_1,\ldots,r_k$ be positive integers with $(r_i,p)=1$ for every $1\leq i\leq k$ and let $S^{(1)},\ldots,S^{(k)}$ be commuting, measure-preserving $\F_q[t]$-actions on a $\sigma$-finite measure space $(X,\mu)$. Define an operator $A_n$ on $L^2(X,\mu)$ by 
\[
A_ng(x)=\frac{1}{q^n}\sum_{\deg u<n}
g\left(S^{(1)}_{u^{r_1}}\cdots S^{(k)}_{u^{r_k}}x\right).
\]
Then $(A_n)$ satisfies the oscillation inequality. 
\end{thm}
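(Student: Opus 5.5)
The plan follows the outline in the proof sketch, in four stages: transfer to a model operator on $\ell^2(\F_q[t]^k)$, approximate it by a major arc model, decompose that model by the degree of the denominator, and prove oscillation bounds for each piece.

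\textbf{Transference.} Fix $x\in X$ and a large parameter $N$. Set
\[
\phi_x(y_1,\ldots,y_k)=g\big(S^{(1)}_{y_1}\cdots S^{(k)}_{y_k}x\big)\,\1_{\deg y_i<r_iN'}.
\]
Integrating in $x$ and using that the actions commute and preserve measure reduces the oscillation inequality for $(A_n)$, with $n\le N$, to the same inequality for
\[
M_n\phi(\boldsymbol{x})=q^{-n}\sum_{\deg f<n}\phi(x_1+f^{r_1},\ldots,x_k+f^{r_k})
\]
on $\ell^2(\F_q[t]^k)$. The box of size $\prod q^{r_iN'}$ with $N'\ge N$ only costs a factor tending to $1$ as $N'/N\to\infty$. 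This step is the routine Calderón principle; the constant must be uniform in $N$, and Lemma \ref{oscImpliesConv} is not needed here.

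\textbf{Approximation by the major arc model.} $M_n$ is a Fourier multiplier with symbol
\[
\widehat{M_n}(\boldsymbol\alpha)=q^{-n}\sum_{\deg f<n}e(\alpha_1f^{r_1}+\cdots+\alpha_kf^{r_k}).
\]
Define $C_n$ by the multiplier $\widehat{C}_n$ from the sketch: Gauss sums $\Lambda(\boldsymbol a,h)$ times indicators of the boxes $\ord(\alpha_i-a_i/h)<-r_in$, with $\deg h<\rho n$. By Plancherel, equivalence in the sense of Definition \ref{operatorequivalence} amounts to
\[
\sup_{\boldsymbol\alpha}\sum_n|\widehat{M_n}(\boldsymbol\alpha)-\widehat{C}_n(\boldsymbol\alpha)|^2<\infty.
\]
I would split into three regions. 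On the minor arcs, the Weyl bound of \cite{LeLiuWooley2025} gives $|\widehat{M_n}|\ll q^{-\delta n}$, which is summable in $n$. On the major arcs, the sum factors as $\widehat{M_n}=\Lambda(\boldsymbol a,h)\,\widehat{J}_n(\boldsymbol\beta)+O(q^{-\delta n})$, where $\boldsymbol\beta=\boldsymbol\alpha-\boldsymbol a/h$ and $\widehat{J}_n(\boldsymbol\beta)=q^{-n}\sum_{\deg f<n}e(\sum\beta_if^{r_i})$. In the box $\ord\beta_i<-r_in$ the factor $\widehat{J}_n$ equals $1$ exactly, since the phases have order $<-1$. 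Outside the box we need decay of $\widehat{J}_n(\boldsymbol\beta)$ at a rate like $q^{-c\,\max_i(r_in+\ord\beta_i)}$. That decay is again summable over $n$ for fixed $\boldsymbol\beta$, because it is geometric in the scale. The arcs at different scales $n$ overlap, so summing over $n$ needs the standard pruning: for fixed $\boldsymbol\alpha$, at most one fraction of each denominator degree is relevant, and $|\Lambda|\ll q^{-\gamma\deg h}$ pays for the remaining sums.

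\textbf{The main obstacle.} I expect the hardest step to be the uniform major arc approximation with a power saving when $\deg h$ is as large as $\rho n$. This is the $\varepsilon$-free Weyl estimate together with the iterated application of \cite[Theorem~3.1]{LeLiuWooley2025}. I would try to first prove it by Weyl differencing directly over $\F_q[t]$. There $(r_i,p)=1$ guarantees that the leading coefficients after differencing do not vanish.

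\textbf{Oscillation for the model.} By Lemma \ref{equivalenceOscLemma} it now suffices to treat $(C_n)$. Write $C_n=\sum_s C_{s,n}$ and use Lemma \ref{oscTriangle}, so it is enough to bound the oscillation of each $(C_{s,n})_n$ by $O(s\,q^{-\gamma s})$; this is summable in $s$. For fixed $s$ and $n$ large, the boxes around distinct fractions $\boldsymbol a/h$ with $\deg h=s$ are disjoint. Writing $\widehat{w_s}=\sum\Lambda\cdot\1_{\text{box at scale } n_s}$ exhibits $C_{s,n}g=D_{s,n}w_s$, where $D_{s,n}$ is the decreasing family of projections onto $\ord(\alpha_i-a_i/h)<-r_in$, and $\|w_s\|_2\ll q^{-\gamma s}\|g\|_2$. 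For monotone projections, the oscillation is controlled by the square function, which is bounded by orthogonality, plus the maximal function $D_s^*$.

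The maximal function splits by scale. The small scales, $n\lesssim s$ or $n\lesssim\log R_s$, are handled by Rademacher–Menshov, with a loss of $\log$ of their number. At large scales, $q^{r_in}$ exceeds $\deg Q_s$, so $D_{s,n}$ factors as modulation by the characters $\boldsymbol a/h$ composed with the projection onto a small box at $0$. That projection is the averaging operator over cosets $\{y:\deg(y_i-x_i)<r_in\}$, i.e.\ a martingale average on $\F_q[t]^k$. The maximal function of this averaging is bounded on $\ell^2$ by the ultrametric Vitali covering and Marcinkiewicz interpolation. Composing with the finitely many modulations, using lattice periodicity modulo $Q_s$ as in Bourgain's argument, gives the bound $\log(2+R_s)\ll s\,q^{s}$. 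This suffices against the factor $q^{-\gamma s}$ only if it is really polylogarithmic in $q^{s}$, and it is, since $\log R_s\ll s q^s$ gives $\log\log$-type losses after interpolation.
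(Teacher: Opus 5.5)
Your architecture matches the paper's: transference, major/minor arcs, pruning by $\deg h$ against the Gauss sum bound, $C_{s,n}g=D_{s,n}w_s$, monotone projections, and small/large scales. But the one genuinely hard analytic input is left unproved, and the route you suggest for it fails.

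\textbf{The frame estimate.} The major arc step is actually exact here (Proposition \ref{prop:major-arc}). What the equivalence of $(M_n)$ and $(C_n)$ really needs is the frame estimate $|\wh M_n(\boldsymbol\beta)|\ll q^{-c\max_i\Delta_i}$, with $\Delta_i=\ord\beta_i+r_in$, throughout the range $0\le\max_i\Delta_i<n/(4(r^\ast)^2)$, and with no $q^{\varepsilon n}$ loss. The loss is fatal because $\Delta_n(\boldsymbol\beta)$ grows by at most $r^\ast$ per step. Hence a point $\boldsymbol\alpha$ can lie in frames at a number of consecutive scales that grows linearly in the first such scale. A bound of the shape $q^{\varepsilon n-c\Delta}$ is trivial on linearly many of these scales, so the sum over $n$ diverges.

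Weyl differencing does not give this estimate. After $r-1$ differencings the linear coefficient is $r!\,\alpha h_1\cdots h_{r-1}$, and $r!\equiv 0\pmod p$ once $r\ge p$, whether or not $p\mid r$ (take $r=p+1$). So $(r_i,p)=1$ does not keep it nonzero. Moreover, Weyl's inequality carries its own $q^{\varepsilon n}$ divisor loss and only sees the top coefficient, while the maximum defining $\Delta_n(\boldsymbol\beta)$ may be attained at a lower exponent.

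The missing idea is the rescaling in Proposition \ref{smallbeta_inverse}. Write $f=z+t^{n-m}y$ with $m=\lfloor 2\Delta/r_k\rfloor$, fix a good $z$ by pigeonholing, and apply the $\varepsilon$-lossy \cite[Theorem~3.1]{LeLiuWooley2025} at scale $m\asymp\Delta$. At that scale the $y^{r_k}$ coefficient has order $\Delta-r_km\approx -r_km/2$, so an $\varepsilon m$ loss is affordable. This estimate then drives Wooley's foliation argument (Theorem \ref{epsilonfree31}) and a downward induction over the exponents (Corollary \ref{epsilonfree43}), giving decay in every $\Delta_i$ (Corollary \ref{smallbeta_inverse_ultimate}).

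Likewise, on the minor arcs you need an inverse theorem for every coefficient. This is Corollary \ref{epsilonfree43}, or its $\varepsilon$-lossy predecessor in \cite{ChampagneGeLeLiuWooley2025}. The reason is that \cite[Theorem~3.1]{LeLiuWooley2025} alone only approximates the $\preceq_p$-maximal coefficients in $\mathcal K^\ast$.

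\textbf{The split of scales.} Your threshold in the maximal inequality is off by an exponential and leaves a range of scales uncovered. The periodicity step, i.e.\ the identity $D_{s,n}g=\mathcal L_{s,n}G_s$, needs a complete residue system modulo $Q_s$ inside the box $\{\deg f_i<r_in\}$. That means $r_in\ge R_s=\deg Q_s=sq^s$, not $q^{r_in}>\deg Q_s$. With your threshold, the roughly $sq^s$ scales $N_s\le n<R_s/\min_i r_i$ are covered by neither argument. Treating $D_{s,n}$ there as a sum of up to $q^{(k+1)s}$ modulated box averages loses far more than $q^{-\gamma s}$ can absorb.

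The fix is to run Rademacher--Menshov over all $N_s\le n<R_s^4$, as the paper does. This costs $\log(2+R_s)\ll 1+s$, not $sq^s$. The Vitali and Marcinkiewicz step for $\mathcal L_{s,n}$ costs only $O(1)$, so no ``$\log\log$'' losses enter. You then recover exactly the $O(sq^{-\gamma s})$ per-$s$ bound you asserted.
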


\section{A Calder\'on transference argument}

By a direct transference argument, it suffices to prove an analogous
oscillation inequality for a discrete model on $\F_q[t]$ with counting measure.
For $n\in \N$ and $r_1<\cdots<r_k$ with $(r_j,p)=1$ for every $j$, define
\begin{equation}\label{Mn}
M_n^{(r_1,\ldots,r_k)} g(a_1,\ldots,a_k)
=
\frac{1}{q^n}\sum_{\deg u< n}
g(a_1+u^{r_1},\ldots,a_k+u^{r_k}),
\qquad g\in \ell^2(\F_q[t]^k),\ a_1,\ldots,a_k\in \F_q[t].
\end{equation}

\begin{thm}\label{discreteOsc}
Let $g\in \ell^2(\F_q[t]^k)$, and suppose $r_1,\ldots,r_k$ are distinct positive integers with $(r_i,p)=1$ for every $1\leq i\leq k$. Then $M_n^{(r_1,\ldots,r_k)}$ satisfies the oscillation inequality. 
\end{thm}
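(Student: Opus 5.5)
We follow the circle-method route outlined in the sketch. Throughout, we work on the Fourier side. Since $M_n:=M_n^{(r_1,\ldots,r_k)}$ is a convolution operator, it is a Fourier multiplier with symbol
\[
\widehat{M_n}(\boldsymbol{\alpha})=\frac{1}{q^n}\sum_{\deg u<n}e(\alpha_1u^{r_1}+\cdots+\alpha_ku^{r_k}).
\]
By Lemma \ref{equivalenceOscLemma} and Plancherel, it suffices to find a model $(C_n)$ satisfying the oscillation inequality with
\[
\sum_n\sup_{\boldsymbol{\alpha}}\ldots
\]
replaced by the pointwise bound
\[
\sum_n|\widehat{M_n}(\boldsymbol{\alpha})-\widehat{C_n}(\boldsymbol{\alpha})|^2\ll 1,
\]
uniformly in $\boldsymbol{\alpha}\in\mathbb T^k$. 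Indeed, this pointwise bound gives equivalence in the sense of Definition \ref{operatorequivalence}.

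\textbf{Construction of the model.} We take $C_n$ to be the major arc model of the sketch: boxes of width $q^{-r_in}$ around $\boldsymbol a/h$ with $\deg h<\rho n$, weighted by the complete Gauss sums
\[
\Lambda(\boldsymbol a,h)=q^{-\deg h}\sum_{\deg u<\deg h}e\big(\textstyle\sum_i a_iu^{r_i}/h\big).
\]
The comparison proceeds in three steps.
\begin{enumerate}[label=(\roman*)]
\item \emph{Minor arcs.} Off the arcs $\ord(\alpha_i-a_i/h)<-r_in+\rho n$ with $\deg h<\rho n$, the Weyl bound of L\^e--Liu--Wooley \cite[Theorem~3.1]{LeLiuWooley2025} gives $|\widehat{M_n}|\ll q^{-\delta n}$. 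This bound is summable in $n$.
\item \emph{Major arcs.} On the major arcs, the $\varepsilon$-free major arc approximation (proved in the appendix) gives
\[
\widehat{M_n}(\boldsymbol\alpha)=\Lambda(\boldsymbol a,h)\,\widehat{V_n}(\boldsymbol\alpha-\boldsymbol a/h)+O(q^{-\delta n}).
\]
Here $V_n$ is the continuous analogue
\[
\widehat{V_n}(\boldsymbol\beta)=\int_{\ord x<n}e\big(\textstyle\sum\beta_ix^{r_i}\big)\,dx.
\]
In the function-field setting, $\widehat{V_n}$ equals $1$ when $\ord\beta_i<-r_in$ for all $i$, and is $\ll (q^{n}\max_i|\beta_i|^{1/r_i})^{-\delta}$ otherwise.
\item \emph{Pruning.} Replacing $\widehat{V_n}$ by the indicator of the narrow box costs a square-summable error. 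This is because, for fixed $\boldsymbol\alpha$, the decay in $n$ is geometric once $n$ passes the scale of $\boldsymbol\alpha-\boldsymbol a/h$. Since the major arcs for distinct $\boldsymbol a/h$ are disjoint when $\rho$ is small, at most one term contributes for each $n$.
\end{enumerate}

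\textbf{Oscillation inequality for the model.} Next we decompose $C_n=\sum_s C_{s,n}$ by $\deg h=s$. By Lemma \ref{oscTriangle}, it suffices to bound the oscillation norm of each $(C_{s,n})_n$ by $\ll s\,q^{-\gamma s}\|g\|$, and then sum over $s$.

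For fixed $s$, we set
\[
\widehat{w_s}=\sum_{\deg h=s}\sum_{\boldsymbol a}\Lambda(\boldsymbol a,h)\,\mathbf 1_{\ord(\boldsymbol\alpha-\boldsymbol a/h)<-r\cdot n_0(s)}\,\widehat g,
\]
where $n_0(s)\approx s/\rho$. This gives $\|w_s\|\ll q^{-\gamma s}\|g\|$ by the Gauss sum bound and the disjointness of the boxes. We then write $C_{s,n}g=D_{s,n}w_s$, where $D_{s,n}$ is the multiplier
\[
\sum_{\deg h=s}\sum_{\boldsymbol a}\mathbf 1_{\ord(\alpha_i-a_i/h)<-r_in}.
\]
The family $(D_{s,n})_n$ is a decreasing family of projections. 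For a monotone family of projections, the oscillation seminorm is controlled by a maximal function via the Quas--Wierdl/Lacey argument \cite{Bergelson2006}.

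\textbf{Maximal estimate.} The core step, and the expected main obstacle, is the maximal estimate
\[
\|\sup_n|D_{s,n}f|\|\ll\log(2+R_s)\|f\|.
\]
Here the loss must be only polynomial in $s$; note that $R_s\approx sq^s$, so the logarithm is $\approx s$. We split into two ranges.
\begin{enumerate}[label=(\alph*)]
\item \emph{Small scales}, $q^{n}\lesssim Q_s$. There are only $O(R_s)$ dyadic scales here, and the Rademacher--Menshov argument for orthogonal increments $D_{s,n}-D_{s,n+1}$ gives a loss of $\log R_s$.
\item \emph{Large scales.} We factor
\[
D_{s,n}=\Pi_s\circ(\text{averaging over } \boldsymbol x\in Q_s\F_q[t]^k\text{ boxes}).
\]
Since $\Pi_s$ is bounded on the whole $\ell^2$ for the relevant $\boldsymbol a/h$ (sampling on the lattice $Q_s\F_q[t]^k$), the sup reduces to a linear box-average maximal function on $\F_q[t]^k$. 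That maximal function is bounded by a Vitali covering argument; ultrametric balls are nested, which makes this essentially Doob's inequality, followed by Marcinkiewicz interpolation.
\end{enumerate}
Combining the pieces, the oscillation norm of $(C_{s,n})_n$ is $\ll s\,q^{-\gamma s}\|g\|$, which sums over $s$. This yields Theorem \ref{discreteOsc}.
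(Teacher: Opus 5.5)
Your skeleton is the paper's. It uses the same major arc model, the splitting $C_n=\sum_s C_{s,n}$, the auxiliary $w_s$ with $C_{s,n}g=D_{s,n}w_s$, the Quas--Wierdl reduction to a maximal inequality, and Rademacher--Menshov at small scales. At large scales it uses the exact factorization of $D_{s,n}$ through averages over $Q_s\F_q[t]^k$-boxes with Vitali/Marcinkiewicz. Your cutoff $n\gtrsim R_s$ in place of $R_s^4$ is fine, since that factorization only needs $r_in\ge R_s$.

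The gap is that you have put the difficulty in the wrong place. The maximal estimate is routine. What carries the proof is the decay you simply assert in (ii): $|\wh{V_n}(\boldsymbol\beta)|\ll q^{-c\Delta}$ with $\Delta=\max_i(\ord\beta_i+r_in)$, with no loss of size $q^{\varepsilon n}$. Step (iii) needs this exactly on the frames, where $0\le\Delta<\rho n$ and $\Delta$ can be tiny compared with $n$. Over $\F_q[t]$ this is not a standard fact; it is what the paper's appendix has to prove. The tool you cite does not give it. Applied at scale $n$, \cite[Theorem~3.1]{LeLiuWooley2025} only yields $\Delta<\varepsilon n+C_0\eta$, which is vacuous when $\Delta\le\varepsilon n$. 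It also only controls coefficients whose exponents are $\preceq_p$-maximal in $\mathcal K$, whereas the frame condition can be triggered by any coordinate. For example, $1\preceq_p r$ for every $r$ coprime to $p$ (since $\binom{r}{1}=r$), so the linear coefficient is never controlled when $1\in\mathcal K$ and $k\ge 2$. This second objection also affects your minor arc step (i). To place $\boldsymbol\alpha$ on a major arc you need approximations to every $\alpha_i$ with a common small denominator. That requires a downward induction as in Corollary~\ref{epsilonfree43}, not Theorem~3.1 alone.

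The missing idea is to run the Weyl bound at a scale $m\asymp\Delta$ rather than $n$, so the $\varepsilon m$ loss is absorbed by $\Delta$. The paper does this in Proposition~\ref{smallbeta_inverse} by splitting $f=z+t^{n-m}y$, which reaches only the top exponent. It then bootstraps to the $\varepsilon$-free bound (Theorem~\ref{epsilonfree31}, Corollary~\ref{epsilonfree43}), and finally gets $|\wh{M}_n(\boldsymbol\beta)|\ll q^{n-c\Delta_i}$ for every $i$ with $\Delta_i<n/2$ (Corollary~\ref{smallbeta_inverse_ultimate}).

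Your continuous formulation actually suggests a clean way to do the rescaling. Normalize $\wh{V_n}$ to equal $1$ on the box. Then $\wh{V_n}(\boldsymbol\beta)=\wh{V_m}(\boldsymbol\beta')$ with $\beta_i'=\beta_it^{r_i(n-m)}$, which leaves every $\Delta_i$ unchanged. Also $\wh{V_m}(\boldsymbol\beta')=\wh{M}_m(\boldsymbol\beta')$ as soon as $\max_i\Delta_i<m$. So with $m=2\max_i\Delta_i$, an all-coefficient Weyl bound at scale $m$ suffices even with an $\varepsilon m$ loss. But this argument, and that input, must actually be supplied.

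Two smaller corrections. First, (ii) is an exact identity, $\wh{M}_n(\boldsymbol\alpha)=\Lambda(\boldsymbol a,h)\wh{M}_n(\boldsymbol\beta)$, obtained by writing $u=hy+z$ (Proposition~\ref{prop:major-arc}). Moreover $\wh{M}_n(\boldsymbol\beta)=\wh{V_n}(\boldsymbol\beta)$ on the major arcs, so no Weyl input and no $O(q^{-\delta n})$ error arise there. Second, in (iii), ``at most one rational per $n$'' is not enough to sum over $n$, because the active rational can change with $n$. You need that for each fixed $s=\deg h$ there is at most one reduced $\boldsymbol a/h$ near $\boldsymbol\alpha$ for \emph{all} $n$. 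You then need the Gauss sum bound $|\Lambda(\boldsymbol a,h)|\ll q^{-\gamma s}$ to sum over $s$.
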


\begin{proof}[Proof of Theorem \ref{unweightedPW} assuming Theorem \ref{discreteOsc}]
Define, for $x\in X$ and a large positive integer $K$,
\begin{equation}\label{Phi}
\Phi_{x,K}(a_1,\ldots,a_k)
:=
\1_{\deg a_1<r_1K}\cdots\1_{\deg a_k<r_kK}
\bigl(S^{(1)}_{a_1}\cdots S^{(k)}_{a_k}g\bigr)(x),
\qquad a_1,\ldots,a_k\in \F_q[t].
\end{equation}
If $n\leq K$ and $\deg a_i<r_iK$ for every $1\leq i\leq k$, then
\begin{equation}\label{eq:transfer}
A_n\bigl(S^{(1)}_{a_1}\cdots S^{(k)}_{a_k}g\bigr)(x)
=
M_n^{(r_1,\ldots,r_k)}\Phi_{x,K}(a_1,\ldots,a_k).
\end{equation}
Indeed,
\begin{align*}
M_n^{(r_1,\ldots,r_k)}\Phi_{x,K}(a_1,\ldots,a_k)
&=\frac{1}{q^n}\sum_{\deg u<n}\Phi_{x,K}(a_1+u^{r_1},\ldots,a_k+u^{r_k})\\
&=\frac{1}{q^n}\sum_{\deg u<n}
\bigl(S^{(1)}_{a_1+u^{r_1}}\cdots S^{(k)}_{a_k+u^{r_k}}g\bigr)(x)\\
&=\frac{1}{q^n}\sum_{\deg u<n}
\bigl(S^{(1)}_{u^{r_1}}\cdots S^{(k)}_{u^{r_k}}
S^{(1)}_{a_1}\cdots S^{(k)}_{a_k}g\bigr)(x)\\
&=A_n\bigl(S^{(1)}_{a_1}\cdots S^{(k)}_{a_k}g\bigr)(x),
\end{align*}
where in the second line we used the degree assumptions to guarantee that the indicators in \eqref{Phi} are equal to $1$.

Now fix positive integers $n_1<\cdots<n_{t_0}\le K$. Since the actions commute with $A_n$ and each
$S^{(j)}_a$ is measure preserving, we have
\begin{align*}
q^{K(r_1+\cdots+r_k)}
\|\mathcal{O}_{n_1,\ldots,n_{t_0}}(A_n g)\|_{L^2(X)}^2
&=
\sum_{\deg a_1< r_1K}\cdots\sum_{\deg a_k< r_kK}
\|\mathcal{O}_{n_1,\ldots,n_{t_0}}(A_n(S^{(1)}_{a_1}\cdots S^{(k)}_{a_k} g))\|_{L^2(X)}^2\\
&=
\sum_{\deg a_1< r_1K}\cdots\sum_{\deg a_k< r_kK}
\int_X
\bigl|\mathcal{O}_{n_1,\ldots,n_{t_0}}(A_n(S^{(1)}_{a_1}\cdots S^{(k)}_{a_k} g)(x))\bigr|^2\,d\mu(x).
\end{align*}
Using \eqref{eq:transfer}, this is at most
\begin{align*}
&\int_X
\sum_{(a_1,\ldots,a_k)\in \F_q[t]^k}
\bigl|\mathcal{O}_{n_1,\ldots,n_{t_0}}
(M_n^{(r_1,\ldots,r_k)}\Phi_{x,K}(a_1,\ldots,a_k))\bigr|^2\,d\mu(x)\\
&=
\int_X
\|\mathcal{O}_{n_1,\ldots,n_{t_0}}
(M_n^{(r_1,\ldots,r_k)}\Phi_{x,K})\|_{\ell^2(\F_q[t]^k)}^2\,d\mu(x).
\end{align*}
By Theorem \ref{discreteOsc}, we obtain
\[
\int_X
\|\mathcal{O}_{n_1,\ldots,n_{t_0}}
(M_n^{(r_1,\ldots,r_k)}\Phi_{x,K})\|_{\ell^2(\F_q[t]^k)}^2\,d\mu(x)
\ll
\int_X \|\Phi_{x,K}\|_{\ell^2(\F_q[t]^k)}^2\,d\mu(x).
\]
Finally,
\begin{align*}
\int_X \|\Phi_{x,K}\|_{\ell^2(\F_q[t]^k)}^2\,d\mu(x)
&=
\int_X
\sum_{\deg a_1<r_1K}\cdots\sum_{\deg a_k<r_kK}
|(S^{(1)}_{a_1}\cdots S^{(k)}_{a_k} g)(x)|^2\,d\mu(x)\\
&=
\sum_{\deg a_1<r_1K}\cdots\sum_{\deg a_k<r_kK}
\|g\|_{L^2(X)}^2\\
&=
q^{K(r_1+\cdots+r_k)}\|g\|_{L^2(X)}^2.
\end{align*}
Putting all of this together and dividing through by $q^{K(r_1+\cdots+r_k)}$ yields
\[
\|\mathcal{O}_{n_1,\ldots,n_{t_0}}(A_ng)\|_{L^2(X)}^2
\ll
\|g\|_{L^2(X)}^2
\]
for every finite increasing sequence $n_1<\cdots<n_{t_0}\le K$. Since $K$ is arbitrary, this proves
Theorem \ref{oscillationinequality}. Lemma \ref{oscImpliesConv} then gives Theorem
\ref{unweightedPW}.
\end{proof}

\section{The circle method}

Consider the discrete shift averaging operator $M^{(r_1,...,r_k)}_n$ as defined in the previous section, and from now on suppress the dependence on $r_1,...,r_k$ by writing $M_n= M^{(r_1,...,r_k)}_n$. The great advantage of working with a discrete shift average $M_n$ is that it is a Fourier multiplier operator where $\wh{M}_n$ is a polynomial exponential sum that can be analyzed via the circle method. In particular, by a change of variables, for each $\boldsymbol{\alpha}=(\alpha_1,...,\alpha_k)\in \mathbb{T}^k$, we have 
\begin{align*}
\widehat{M_ng}(\boldsymbol{\alpha})&=\sum_{\boldsymbol{u}\in \F_q[t]^k}\frac{1}{q^n}\sum_{\deg f<n}g(\boldsymbol{u}+(f^{r_1},...,f^{r_k}))e(-\boldsymbol{u}\cdot \boldsymbol{\alpha}) \\
&=\frac{1}{q^n}\sum_{\deg f<n }e(\alpha_1f^{r_1}+\cdots+\alpha_kf^{r_k})\sum_{\boldsymbol{u}\in\F_q[t]^k}g(\boldsymbol{u})e(-\boldsymbol{u}\cdot \boldsymbol{\alpha}) \\
&=\widehat{M}_n(\boldsymbol{\alpha})\widehat{g}(\boldsymbol{\alpha}),
\end{align*}
where
\[\widehat{M}_n(\alpha_1,...,\alpha_k)=\frac{1}{q^n}\sum_{\deg f<n}e(\alpha_1f^{r_1}+\cdots +\alpha_kf^{r_k}).\]

Throughout this section we assume $\mathcal{K}=\{r_1,...,r_k\}$ with $(r_i,p)=1$ for every $1\leq i\leq k$. Let $c,C>0$ be the constants obtained when applying Corollary \ref{epsilonfree43} with
$\mathcal{K}$. Set
\begin{equation}\label{circlemethodparameters}
r^\ast:=\max\{r_1,\ldots,r_k\}, \qquad
\rho:=\frac{1}{8r^\ast}, \qquad
\delta_0:=\min\!\left\{\frac c2,\frac{1}{32C(r^\ast)^2}\right\}.
\end{equation}

Define
\begin{equation}\label{majorarcbox}
N_n(a_1,\ldots,a_k,h)
:=
\Big\{(\alpha_1,\ldots,\alpha_k)\in \mathbb{T}^k:
\ord\!\Big(\alpha_i-\frac{a_i}{h}\Big)<-r_i n+\frac{n}{4(r^{\ast})^2}
\ \forall\,1\le i\le k\Big\}.
\end{equation}
We then define the \emph{major arcs} by
\begin{equation}\label{majorarcs}
\mathfrak{M}_n
=\bigcup_{\substack{h\ \mathrm{monic}\\ \deg h<\rho n}}
\ \ \bigcup_{\substack{a_1,\ldots,a_k\in \F_q[t]\\ \deg a_i<\deg h\ \forall i\\ (a_1,\ldots,a_k,h)=1}}
N_n(a_1,\ldots,a_k,h),
\end{equation}
and the \emph{minor arcs} by$
\mathfrak{m}_n := \mathbb{T}^k\setminus \mathfrak{M}_n.$

We will also use that these major arcs are pairwise disjoint. Indeed, suppose that
\[
\boldsymbol{\alpha}\in N_n(a_1,...,a_k,h)\cap N_n(b_1,...,b_k,g),
\]
where $h,g$ are monic, $\deg h,\deg g<\rho n$, and both fractions are reduced in the sense that
$(a_1,...,a_k,h)=(b_1,...,b_k,g)=1$. If $\boldsymbol{a}/h\neq \boldsymbol{b}/g$, then for some
$1\le i\le k$ we have $a_i/h\neq b_i/g$. Hence
\[
\ord\!\left(\frac{a_i}{h}-\frac{b_i}{g}\right)\ge -\deg h-\deg g>-2\rho n
=-\frac{n}{2r^\ast}.
\]
On the other hand, by the ultrametric inequality and the definition of $N_n$,
\[
\ord\!\left(\frac{a_i}{h}-\frac{b_i}{g}\right)
<
-r_in+\frac{n}{4(r^\ast)^2}
\le -n+\frac{n}{4(r^\ast)^2}
<-\frac{n}{2r^\ast},
\]
which is a contradiction.  By assumption $h,g$ are monic $(a_1,...,a_k,h)=(b_1,...,b_k,g)=1$, so this forces $\boldsymbol{a}/h=\boldsymbol{b}/g$.

Now we state the minor arc estimate, which is a consequence of
Corollary \ref{epsilonfree43} with $\mathcal{K}=\{r_1,...,r_k\}$ and our chosen parameters.

\begin{lem}\label{minorarcestimate}
For every integer $n\ge 1$ and every $\boldsymbol{\alpha}\in \mathfrak{m}_n$, one has
\[
|\widehat{M}_n(\boldsymbol{\alpha})|\ll_{\mathcal{K},q} q^{-\delta_0 n}.
\]
\end{lem}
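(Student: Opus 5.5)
We argue by contraposition. Fix $n\ge 1$ and $\boldsymbol{\alpha}\in\mathbb T^k$, and suppose that $|\widehat M_n(\boldsymbol\alpha)|> C_0 q^{-\delta_0 n}$ for a large constant $C_0$ to be chosen. We will show that $\boldsymbol\alpha\in\mathfrak M_n$. Small $n$ (say $n\le n_0(\mathcal K,q)$) are trivial, since $|\widehat M_n|\le 1$ and the implied constant may absorb $q^{\delta_0 n_0}$. So assume $n$ is large.

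The main tool is Corollary~\ref{epsilonfree43}, the $\varepsilon$-free Weyl estimate, applied with $\mathcal K=\{r_1,\ldots,r_k\}$. I expect it to say the following. If $|\widehat M_n(\boldsymbol\alpha)|\ge q^{-\eta n}$ with $0<\eta\le c$, then there exist a monic $h$ and $a_1,\ldots,a_k$ such that
\[
\deg h\le C\eta n \qquad\text{and}\qquad \ord(h\alpha_i-a_i)<-r_in+C\eta n
\]
for every $1\le i\le k$. This is the Dirichlet-type structural conclusion, with linear dependence on $\eta$ and no $\varepsilon$-loss.

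We apply this with $\eta=\delta_0$, which is admissible since $\delta_0\le c/2<c$. We may assume $\deg a_i<\deg h$ after reducing $a_i$ modulo $h$; this is allowed because $\ord(h\alpha_i-a_i)<0$ for large $n$ and $\alpha_i\in\mathbb T$. Next we divide out $d=(a_1,\ldots,a_k,h)$ to obtain a reduced fraction $\boldsymbol a'/h'$ with $h'\mid h$. Replacing $h$ by $h'$ only improves the degree bound. For the approximation, note that $h\alpha_i-a_i=(h/h')(h'\alpha_i-a_i')$, so
\[
\ord(h'\alpha_i-a_i')\le \ord(h\alpha_i-a_i).
\]
The approximation is therefore preserved as well.

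It remains to check the numerology against \eqref{circlemethodparameters}. First, $\deg h'\le C\delta_0 n\le n/(32 C_{}^{0}(r^\ast)^2)\cdot C$; since $\delta_0\le 1/(32C(r^\ast)^2)$, this gives $\deg h'\le n/(32(r^\ast)^2)<n/(8r^\ast)=\rho n$. Second, from $\ord(h'\alpha_i-a_i')<-r_in+C\delta_0 n$ we get
\[
\ord\!\left(\alpha_i-\frac{a_i'}{h'}\right)<-r_in+C\delta_0 n-\deg h'\le -r_in+\frac{n}{32(r^\ast)^2}<-r_in+\frac{n}{4(r^\ast)^2}.
\]
Hence $\boldsymbol\alpha\in N_n(\boldsymbol a',h')\subseteq\mathfrak M_n$, which contradicts $\boldsymbol\alpha\in\mathfrak m_n$. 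This yields $|\widehat M_n(\boldsymbol\alpha)|\le C_0q^{-\delta_0 n}$ on $\mathfrak m_n$.

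The main obstacle is matching the precise form of Corollary~\ref{epsilonfree43}. Its normalisation may differ from what I assumed. For example, the bound may be stated in terms of $q^{-\eta n}$ with $\eta$ measured relative to $r^\ast$, or the approximation may be given as $\ord(h\alpha_i-a_i)<-r_in+C\eta n$ for only one coefficient, or with a common denominator of degree $\le C\eta n$. If the corollary provides simultaneous approximation only for the leading index $r^\ast$, I would iterate it, applying it to the remaining coefficients after fixing $\alpha_{r^\ast}$ near $a/h$ and absorbing the partial denominators, at the cost of constants. The factor $32$ built into $\delta_0$ suggests the intended constants leave exactly such slack. One should also make sure the corollary's threshold exponent is at least $c/2$, which the definition of $\delta_0$ guarantees.
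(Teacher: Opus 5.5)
Your overall strategy is the paper's. You argue by contradiction, apply Corollary~\ref{epsilonfree43} with $\eta=\delta_0 n$ (in the paper's unnormalised convention $q^{n-\eta}$, $\eta\le cn$), produce a reduced rational point $\boldsymbol a/h$, and check that $\boldsymbol\alpha\in N_n(\boldsymbol a,h)$ with $\deg h<\rho n$. The gap is that Corollary~\ref{epsilonfree43} does not give a common denominator. It gives, for each $i$ separately, some $b_i$ and monic $g_i$ with $\deg g_i\le C\eta+D$ and $\ord(g_i\alpha_i-b_i)<-r_in+C\eta+D$. So your main argument skips the key step beyond quoting the corollary: assembling these into one denominator. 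Your fallback of iterating from the leading exponent is not what is needed, because the corollary already performs that downward induction internally.

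The fix, as in the paper, is as follows. Write $b_i/g_i=a_i'/h_i$ in lowest terms, and set $h:=\operatorname{lcm}(h_1,\ldots,h_k)$ and $a_i:=a_i'h/h_i$. Then $(a_1,\ldots,a_k,h)=1$ automatically. The approximation does not deteriorate, since $\ord(\alpha_i-a_i/h)=\ord(\alpha_i-b_i/g_i)\le\ord(g_i\alpha_i-b_i)<-r_in+C\delta_0 n+D<-r_in+n/(4(r^\ast)^2)$ for large $n$. Also $\deg a_i<\deg h$ follows because $\alpha_i\in\mathbb T$ and this order is negative. What changes is the degree: you only have $\deg h\le\sum_i\deg h_i\le k(C\delta_0 n+D)$. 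So your check $\deg h'\le n/(32(r^\ast)^2)$ is not the relevant one. Instead use that the $r_i$ are distinct, so $k\le r^\ast$. This gives $kC\delta_0 n\le n/(32r^\ast)$, hence $\deg h\le n/(32r^\ast)+kD<\rho n=n/(8r^\ast)$ for $n$ large. This is exactly what the second factor of $r^\ast$ in $\delta_0$ is for. With this step inserted, your argument is complete.
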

\begin{proof}
Write $\boldsymbol{\alpha}=(\alpha_1,\ldots,\alpha_k)$ and set $\eta:=\delta_0 n$.
Suppose for contradiction that
\[
|\widehat{M}_n(\boldsymbol{\alpha})|>q^{-\delta_0 n}=q^{-\eta}.
\]
Then
\[
\left|\sum_{\deg u<n}e(\alpha_1u^{r_1}+\cdots+\alpha_ku^{r_k})\right|
=q^n|\widehat{M}_n(\boldsymbol{\alpha})|>q^{n-\eta}.
\]
Since $\eta=\delta_0 n\le (c/2)n<cn$, we may apply Corollary \ref{epsilonfree43} with
$N=n$ and $\mathcal{K}=\{r_1,\ldots,r_k\}$. For this application of Corollary \ref{epsilonfree43} we must also assume $n$ is sufficiently large in terms of $\mathcal{K}$ and $q$, but the desired result can be guaranteed for these small $n$ by enlarging the implicit constant. 
We therefore obtain for
each $1\leq i\leq k$ polynomials $b_i\in\F_q[t]$ and monic $g_i\in\F_q[t]$ such that
\[
\ord(g_i\alpha_i-b_i)<-r_in+C\eta+D
\]
and
\[
\deg g_i\leq C\eta+D\le \frac{n}{16(r^\ast)^2}.
\]
Reducing $b_i/g_i$ to lowest terms, we may write $\frac{b_i}{g_i}=\frac{a_i'}{h_i}$
with $h_i$ monic and $(a_i',h_i)=1$. Then $\deg h_i\leq n/(8(r^\ast)^2)$ and
\[
\ord\!\left(\alpha_i-\frac{a_i'}{h_i}\right)
<-r_in-\deg h_i+C\eta+D
<-r_in+\frac{n}{4(r^\ast)^2}.
\]
Now let $h=\operatorname{lcm}(h_1,\ldots,h_k)$, and define $a_i:=a_i'h/h_i$. By construction,
$a_i/h=a_i'/h_i$, and if $d_i:=(a_i,h)$, then $h_i=h/d_i$. Moreover,
$(a_1,\ldots,a_k,h)=1$. Since the $r_i$ are distinct and $r_i\le r^\ast$, we have $k\le r^\ast$,
and hence
\[
\deg h\leq \sum_{i=1}^k\deg h_i
\leq k(C\delta_0n+D)
\leq \frac{n}{16r^\ast}
<\rho n.
\]
Moreover,
\[
\ord\!\left(\alpha_i-\frac{a_i}{h}\right)
=
\ord\!\left(\alpha_i-\frac{a_i'}{h_i}\right)
<-r_in+\frac{n}{4(r^\ast)^2}
\]
for every $1\leq i\leq k$. Therefore $\boldsymbol{\alpha}\in N_n(a_1,\ldots,a_k,h)\subset \mathfrak{M}_n$,
contradicting $\boldsymbol{\alpha}\in \mathfrak{m}_n$. This proves the lemma.
\end{proof}

Now we begin the proof of the major arc estimate. We will encounter the following type of Gauss sum. For $(a_1,...,a_k,h)=1$ define
\begin{equation}\label{LambdaHat}
\Lambda(a_1,...,a_k,h)
:=q^{-\deg h}\sum_{\deg f<\deg h}
e\!\left(\frac{a_1f^{r_1}+\cdots+a_kf^{r_k}}{h}\right).
\end{equation}
For later use, if $h$ is monic, define
\begin{equation}\label{AhNotation}
\mathcal{A}_h:=\bigl\{\boldsymbol{a}=(a_1,\ldots,a_k)\in \F_q[t]^k:
\deg a_i<\deg h\ \forall i,\ (a_1,...,a_k,h)=1\bigr\}.
\end{equation}
For $\boldsymbol{a}\in\mathcal{A}_h$, write
\begin{equation}\label{vectorFractionNotation}
\frac{\boldsymbol{a}}{h}:=\left(\frac{a_1}{h},\ldots,\frac{a_k}{h}\right),
\qquad
\Lambda(a_1,...,a_k,h):=\Lambda(a_1,\ldots,a_k,h).
\end{equation}
With these definitions in hand we derive the major arc estimate. This proof is inspired by Kubota's seminal work on Waring's problem over $\F_q[t]$ \cite{Kubota1974Waring}.

\begin{prop}[Major arc estimate]\label{prop:major-arc}
Assume that $n\in\mathbb{N}$. Let $h\in \F_q[t]$ be monic with $\deg h<\rho n$, let $a_1,\ldots,a_k\in \F_q[t]$ satisfy $\deg a_i<\deg h$ for every $1\le i\le k$, and assume that $(a_1,\ldots,a_k,h)=1$. If $(\alpha_1,\ldots,\alpha_k)\in N_n(a_1,\ldots,a_k,h)$ and
\[
\beta_i:=\alpha_i-\frac{a_i}{h}
\qquad (1\le i\le k),
\]
then
\begin{equation}\label{majorarcestimate}
\wh{M}_n(\alpha_1,\ldots,\alpha_k)
=
\Lambda(a_1,\ldots,a_k,h)\wh{M}_n(\beta_1,\ldots,\beta_k).
\end{equation}
\end{prop}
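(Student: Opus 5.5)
The plan is to prove the identity exactly, by splitting each $f$ into a residue class modulo $h$ and a quotient, in the spirit of Kubota. Set $d:=\deg h<\rho n\le n$. Every $f$ with $\deg f<n$ can be written uniquely as $f=hg+s$ with $\deg s<d$ and $\deg g<n-d$. The two phases will then decouple as follows. The rational part $e\big(\sum_i a_if^{r_i}/h\big)$ depends only on $s$. The perturbation part $e\big(\sum_i\beta_if^{r_i}\big)$ will turn out to depend only on $g$, because the dependence on $s$ is killed by the smallness of $\beta_i$.

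The rational part is immediate. Expanding $(hg+s)^{r_i}=s^{r_i}+h\cdot(\text{polynomial})$ gives $a_i(hg+s)^{r_i}/h\equiv a_is^{r_i}/h$ modulo $\F_q[t]$, and $e$ is trivial on $\F_q[t]$. Hence
\[
e\Big(\sum_i\alpha_if^{r_i}\Big)=e\Big(\sum_i\frac{a_is^{r_i}}{h}\Big)\,e\Big(\sum_i\beta_if^{r_i}\Big).
\]

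The key step, and the only place where the definition of $N_n$ enters, is the claim that $e(\beta_i(hg+s)^{r_i})=e(\beta_i(hg)^{r_i})$ for each $i$. First, the difference $(hg+s)^{r_i}-(hg)^{r_i}$ is a sum of terms each containing at least one factor $s$. So its degree is at most $(r_i-1)(n-1)+\deg s\le (r_i-1)(n-1)+\rho n-1$. Second, since $\operatorname{ord}$ is integer valued, the defining condition of $N_n$ gives $\operatorname{ord}\beta_i\le -r_in+n/(4(r^\ast)^2)-1$. Adding the two bounds, $\operatorname{ord}$ of $\beta_i$ times the difference is at most
\[
-n-r_i+1+\rho n+\frac{n}{4(r^\ast)^2}-2<-1,
\]
because $\rho+1/(4(r^\ast)^2)<1$ and $r_i\ge1$. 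Therefore the residue of this product vanishes and $e$ of it equals $1$. Taking the product over $i$ gives $e(\sum_i\beta_if^{r_i})=e(\sum_i\beta_i(hg)^{r_i})$.

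It remains to sum over $s$ and $g$. Using the two decoupled phases,
\[
\wh M_n(\boldsymbol\alpha)=q^{-n}\sum_{\deg s<d}e\Big(\sum_i\frac{a_is^{r_i}}{h}\Big)\sum_{\deg g<n-d}e\Big(\sum_i\beta_i(hg)^{r_i}\Big)
=\Lambda(\boldsymbol a,h)\,q^{d-n}\sum_{\deg g<n-d}e\Big(\sum_i\beta_i(hg)^{r_i}\Big).
\]
Now apply the same invariance to $\wh M_n(\boldsymbol\beta)$ itself, again writing $f=hg+s$. The summand depends only on $g$, so
\[
\wh M_n(\boldsymbol\beta)=q^{-n}\,q^{d}\sum_{\deg g<n-d}e\Big(\sum_i\beta_i(hg)^{r_i}\Big).
\]
Comparing the last two displays yields \eqref{majorarcestimate}. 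The only step needing care is the degree bookkeeping in the key step above; everything else is a formal manipulation.
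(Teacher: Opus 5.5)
Your proposal is correct and is essentially the paper's proof: the same decomposition $f=hg+s$, the same observation that the rational phase depends only on $s$ while the $\boldsymbol\beta$-phase depends only on $g$, and the same comparison with the decomposed $\wh M_n(\boldsymbol\beta)$. Your degree count, using $n-1$ and $\deg s\le d-1$, is slightly sharper than the paper's, so it works uniformly in $n$, whereas the paper's bound needs $n>1$.

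One harmless slip: $n/(4(r^\ast)^2)$ need not be an integer, so $\operatorname{ord}\beta_i<-r_in+n/(4(r^\ast)^2)$ does not give $\operatorname{ord}\beta_i\le -r_in+n/(4(r^\ast)^2)-1$. You do not need it, because the strict bound alone already yields $\operatorname{ord}\bigl(\beta_i((hg+s)^{r_i}-(hg)^{r_i})\bigr)<-n-r_i+\rho n+n/(4(r^\ast)^2)\le -1$.
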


\begin{proof}
Since $(\alpha_1,\ldots,\alpha_k)\in N_n(a_1,\ldots,a_k,h)$, we have
\[
\ord(\beta_i)<-r_i n+\frac{n}{4(r^{\ast})^2}
\qquad (1\le i\le k).
\]
Every polynomial $u\in \F_q[t]$ with $\deg u<n$ can be written uniquely in the form $u=hy+z$,

where $\deg z<\deg h$ and $\deg y<n-\deg h$. Therefore
\begin{align*}
q^n\wh{M}_n(\alpha_1,\ldots,\alpha_k)
&=\sum_{\deg z<\deg h}\sum_{\deg y<n-\deg h}
 e\!\Big(\sum_{i=1}^k \alpha_i(hy+z)^{r_i}\Big).
\end{align*}
Fix such $y$ and $z$. For each $1\le i\le k$, the binomial theorem gives
\[
(hy+z)^{r_i}-(hy)^{r_i}=\sum_{j=1}^{r_i}\binom{r_i}{j}(hy)^{r_i-j}z^j.
\]
Since $\deg(hy)<n$ and $\deg z<\deg h$, each term on the right has degree at most
\[
(r_i-j)n+j\deg h\le (r_i-1)n+\deg h.
\]
It follows that
\[
\ord\!\Big(\beta_i\big((hy+z)^{r_i}-(hy)^{r_i}\big)\Big)
<-n+\deg h+\frac{n}{4(r^{\ast})^2}.
\]
Since $\deg h<\rho n=n/(8r^{\ast})$, the right-hand side is $<-1$ for all $n>1$. Therefore for $1\leq i\leq k$ we have 
$e\!\big(\beta_i(hy+z)^{r_i}\big)=e\!\big(\beta_i(hy)^{r_i}\big)$
and hence
\[
e\!\Big(\sum_{i=1}^k \beta_i(hy+z)^{r_i}\Big)=e\!\Big(\sum_{i=1}^k \beta_i(hy)^{r_i}\Big).
\]
Also, $(hy+z)^{r_i}\equiv z^{r_i}\pmod h$
for every $1\le i\le k$, so
\[
e\!\Big(\sum_{i=1}^k \frac{a_i}{h}(hy+z)^{r_i}\Big)=e\!\Big(\sum_{i=1}^k \frac{a_i}{h}z^{r_i}\Big).
\]
Combining these identities, we obtain
\begin{align*}
q^n\wh{M}_n(\alpha_1,\ldots,\alpha_k)
&=\Big(\sum_{\deg z<\deg h}e\!\Big(\frac{a_1z^{r_1}+\cdots+a_kz^{r_k}}{h}\Big)\Big)
\Big(\sum_{\deg y<n-\deg h}e\!\Big(\sum_{i=1}^k \beta_i(hy)^{r_i}\Big)\Big)\\
&=q^{\deg h}\Lambda(a_1,\ldots,a_k,h)
\sum_{\deg y<n-\deg h}e\!\Big(\sum_{i=1}^k \beta_i(hy)^{r_i}\Big).
\end{align*}
The same decomposition gives
\begin{align*}
q^n\wh{M}_n(\beta_1,\ldots,\beta_k)
&=\sum_{\deg z<\deg h}\sum_{\deg y<n-\deg h}e\!\Big(\sum_{i=1}^k \beta_i(hy+z)^{r_i}\Big)\\
&=q^{\deg h}\sum_{\deg y<n-\deg h}e\!\Big(\sum_{i=1}^k \beta_i(hy)^{r_i}\Big).
\end{align*}
Comparing the last two displays proves \eqref{majorarcestimate}.
\end{proof}

\section{Reduction to the major arc model}\label{majorarcmodel}

Recall the definitions of
$\mathcal{A}_h$, $\mathbf{a}/h$, and $N_n(a_1,\ldots,a_k,h)$ from the previous section.

Let
\[
\mathcal{B}_n:=\Bigl\{\boldsymbol{\beta}\in\mathbb{T}^k:\ord(\beta_i)<-r_in\ \forall i\Bigr\}.
\]
We define a multiplier $\wh{C}_n$ supported on the translated boxes
$\mathbf{a}/h+\mathcal{B}_n$, one for each rational point $\mathbf{a}/h$ with $\deg h<\rho n$, weighted
by $\Lambda(\mathbf{a},h)$. In particular, define
\[
\wh{C}_n(\boldsymbol{\alpha})
:=\sum_{\substack{h\ \mathrm{monic}\\ \deg h<\rho n}}
\ \sum_{\mathbf{a}\in\mathcal{A}_h}
\Lambda(\mathbf{a},h)
\1_{\mathcal{B}_n}\left(\boldsymbol{\alpha}-\frac{\mathbf{a}}{h}\right).
\] 
Notice that if $\boldsymbol{\beta}\in\mathcal{B}_n$, then $\wh{M}_n(\boldsymbol{\beta})=1$. Indeed, if $\deg f<n$, then
\[
\ord(\beta_i f^{r_i})<-r_in+r_i(n-1)<-1
\qquad (1\le i\le k),
\]
and hence
\[
e\!\Bigl(\beta_1f^{r_1}+\cdots+\beta_kf^{r_k}\Bigr)=1.
\]
For each pair $(\mathbf{a},h)$ define the frame
\[
\mathcal{F}_n(\mathbf{a},h):=N_n(\mathbf{a},h)\setminus\left(\frac{\mathbf{a}}{h}+\mathcal{B}_n\right).
\]
Then we may decompose $\mathbb{T}^k$ into major and minor arcs at scale $n$, and then we may decompose each major arc box into a translate of $\mathcal{B}_n$ and a frame for each $\boldsymbol{\alpha}\in \mathbb{T}^k$. With this decomposition in mind, using the disjointness of the major arcs and Proposition~\ref{prop:major-arc}, we immediately have 
\begin{equation}\label{decomposition}
\wh{M}_n(\boldsymbol{\alpha})-\wh{C}_n(\boldsymbol{\alpha})
=
\wh{M}_n(\boldsymbol{\alpha})\1_{\mathfrak{m}_n}(\boldsymbol{\alpha})
+
\sum_{\substack{h\ \mathrm{monic}\\ \deg h<\rho n}}
\sum_{\mathbf{a}\in \mathcal{A}_h}
\Lambda(\mathbf{a},h)
\wh{M}_n\left(\boldsymbol{\alpha}-\frac{\mathbf{a}}{h}\right)
\1_{\mathcal{F}_n(\mathbf{a},h)}(\boldsymbol{\alpha}).
\end{equation}

With this notation in hand, we are prepared to perform the reduction from $M_n$ to the major arc model $C_n$. 

\begin{prop}\label{outermajorreduction}
The sequences $M_n$ and $C_n$ are equivalent on $\ell^2(\F_q[t]^k)$.
\end{prop}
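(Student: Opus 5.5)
By Plancherel, it suffices to show that for every $\boldsymbol{\alpha}\in\mathbb{T}^k$,
\[
\sum_{n=1}^\infty |\wh{M}_n(\boldsymbol{\alpha})-\wh{C}_n(\boldsymbol{\alpha})|^2\ll_{\mathcal{K},q} 1 .
\]
Indeed, then
\[
\sum_n\|M_ng-C_ng\|_{\ell^2}^2=\int_{\mathbb{T}^k}\sum_n|\wh{M}_n-\wh{C}_n|^2|\wh g|^2\,dm\ll\|g\|_{\ell^2}^2 .
\]
We use the decomposition \eqref{decomposition}. Fix $n$. Since the major arcs are pairwise disjoint, at most one term of the right-hand side is nonzero at any given $\boldsymbol{\alpha}$. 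So the pointwise sum splits into a minor arc contribution and a frame contribution, which we bound separately.

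\emph{Minor arcs.} By Lemma~\ref{minorarcestimate}, $|\wh{M}_n(\boldsymbol{\alpha})\1_{\mathfrak{m}_n}(\boldsymbol{\alpha})|\ll q^{-\delta_0 n}$ uniformly in $\boldsymbol{\alpha}$. Summing the squares over $n$ gives a geometric series, bounded by a constant depending on $\mathcal{K}$ and $q$.

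\emph{Frames.} If $\boldsymbol{\alpha}\in\mathcal{F}_n(\mathbf{a},h)$, set $\boldsymbol{\beta}=\boldsymbol{\alpha}-\mathbf{a}/h$. Then $\boldsymbol{\beta}\notin\mathcal{B}_n$ but $\ord\beta_i<-r_in+n/(4(r^\ast)^2)$ for all $i$. We bound $|\Lambda(\mathbf{a},h)|\le 1$ trivially and need a decay estimate for $\wh{M}_n(\boldsymbol{\beta})$ in terms of how far $\boldsymbol{\beta}$ sits outside $\mathcal{B}_n$. Define the \emph{excess}
\[
m(\boldsymbol{\beta}):=\max_i(\ord\beta_i+r_in)\ge 0,
\]
where the inequality holds because $\boldsymbol{\beta}\notin\mathcal{B}_n$. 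The goal is an estimate
\[
|\wh{M}_n(\boldsymbol{\beta})|\ll q^{-c' m(\boldsymbol{\beta})}\quad\text{or}\quad \ll q^{-c'n}
\]
for some $c'>0$. To get it, apply Corollary~\ref{epsilonfree43} (the $\varepsilon$-free Weyl inequality) to $\wh{M}_n(\boldsymbol{\beta})$ with $\eta$ a small multiple of $m$. If $|\wh{M}_n(\boldsymbol{\beta})|>q^{-\eta}$, the corollary yields monic $g_i$ with $\deg g_i\le C\eta+D$ and $\ord(g_i\beta_i-b_i)<-r_in+C\eta+D$. Since $\ord\beta_i<-1$ is very small, the only possibility is $b_i=0$. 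This forces $\ord\beta_i<-r_in+C\eta+D$ for all $i$, which contradicts the definition of $m$ once $C\eta+D<m$. This works for $m$ larger than a constant; for bounded $m$ we use the trivial bound. Alternatively, one can prove the decay directly: sum over $\deg f<n$ by splitting $f=f_1t^{n-j}+f_0$, exploiting the linear phase in the top coefficients, and using orthogonality \eqref{eq:orthogonality}.

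\emph{Summing over $n$.} Fix $\boldsymbol{\alpha}$. For each $n$ with $\boldsymbol{\alpha}$ in a frame at scale $n$, note that $\deg h<\rho n$ and the boxes are disjoint, so the rational point $\mathbf{a}/h$ is determined by $n$. The key observation is that as $n$ varies, the relevant $\mathbf{a}/h$ can change only when $\deg h$ jumps. For a fixed $\mathbf{a}/h$, the excess $m_n=\max_i(\ord\beta_i+r_in)$ increases strictly (by at least $1$) in $n$. Hence
\[
\sum_n q^{-2c'm_n}\ll 1
\]
for each fixed rational point. We must also control the number of distinct rational points $\mathbf{a}/h$ that $\boldsymbol{\alpha}$ visits. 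Suppose $\boldsymbol{\alpha}$ is near $\mathbf{a}/h$ at scale $n$ and near a distinct $\mathbf{a}'/h'$ at scale $n'>n$. Separation gives $\ord(\mathbf{a}/h-\mathbf{a}'/h')\ge-2\rho n'$, so $\boldsymbol{\alpha}$ has excess of order $\gg n'$ relative to $\mathbf{a}/h$ at scale $n'$. From this we expect to deduce that the frame contributions attached to $\mathbf{a}/h$ are of size $q^{-c''n}$ once $n$ is large compared with the scale at which the point was first seen. In the worst case, one can instead bound each frame term by $\min(q^{-c'm_n},q^{-c'n})$ with $m_n$ large unless $n$ is in a short range. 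This bookkeeping, namely showing that the frame terms summed over all $n$ are uniformly bounded even though the center $\mathbf{a}/h$ changes with $n$, is the main obstacle. I would handle it by observing that since $\ord\beta_i<-r_in+n/(4(r^\ast)^2)$ is required, $\boldsymbol{\alpha}$ lies in a frame around a given $\mathbf{a}/h$ for only $O(1+m)$-controlled ranges, with the geometric decay in $m$ summing to $O(1)$.
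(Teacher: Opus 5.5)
Your minor-arc step, your frame decay estimate, and your observation that for a \emph{fixed} centre the excess grows by at least $1$ per step in $n$ are all sound. The decay estimate applies Corollary~\ref{epsilonfree43} with $\eta$ a small multiple of the excess and forces $b_i=0$, which is essentially how the paper proves Corollary~\ref{smallbeta_inverse_ultimate}.

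The gap is in the last step, and you create it by bounding $|\Lambda(\mathbf{a},h)|\le 1$ trivially. With that bound the bookkeeping you describe cannot be closed. A single $\boldsymbol{\alpha}$ can lie in frames around arbitrarily many, even infinitely many, distinct centres $\mathbf{a}_j/h_j$ as $n$ grows. Already for $k=1$ this happens for $\alpha$ whose continued-fraction denominators have rapidly increasing degrees. At the first scale at which $\boldsymbol{\alpha}$ enters the frame of a new centre, the excess can be $0$, and there $\wh{M}_n(\boldsymbol{\beta})$ is not small at all. For instance, with $k=1$, $r\ge 2$ and $\ord\beta=-rn$, one has $\wh{M}_n(\beta)=1$ exactly. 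At such a scale $|\wh{M}_n(\boldsymbol{\alpha})-\wh{C}_n(\boldsymbol{\alpha})|=|\Lambda(\mathbf{a}_j,h_j)|$. So the pointwise sum over $n$ is at least $\sum_j|\Lambda(\mathbf{a}_j,h_j)|^2$. Neither a bound of the form $q^{-c'n}$ on frames (which is not available there anyway) nor a count of controlled ranges can make this uniform without information on the size of $\Lambda$.

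The missing ingredient is the Gauss sum bound of Corollary~\ref{GaussSumBound}, $|\Lambda(\mathbf{a},h)|\ll q^{-\gamma\deg h}$, combined with grouping the frame terms in \eqref{decomposition} by $s=\deg h$. For fixed $s$ and $\boldsymbol{\alpha}$ there is at most one reduced $\mathbf{a}/h$ with $\deg h=s$ such that $\boldsymbol{\alpha}\in N_n(\mathbf{a},h)$ for \emph{some} $n$ with $s<\rho n$. Indeed, two distinct such centres satisfy $\ord(a_i/h-a_i'/h')\ge -2s$ for some $i$, whereas $\boldsymbol{\alpha}$ lies at order $<-n\bigl(1-1/(4(r^\ast)^2)\bigr)<-2s$ from each.

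Hence all degree-$s$ frame terms at $\boldsymbol{\alpha}$ involve one fixed $\boldsymbol{\beta}$. Your monotonicity argument then bounds the degree-$s$ contribution by $q^{-2\gamma s}\sum_{m\ge 0}q^{-2cm}\ll q^{-2\gamma s}$. Summing over $s$ (Minkowski in $\ell^2(\mathbb{N}\times\mathbb{T}^k)$, as the paper does) gives the uniform bound. In your terms: distinct visited centres necessarily have distinct degrees, and it is the factor $q^{-2\gamma\deg h}$, not any decay in $n$, that makes their contributions summable.
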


\begin{proof}
By \eqref{decomposition} and Plancherel's identity, we have 
\begin{align*}
\sum_{x\in\F_q[t]^k}\sum_{n=1}^\infty |M_ng(x)-C_ng(x)|^2=\sum_{n=1}^\infty\int_{\mathbb{T}^k}
\bigl|\wh{M}_n(\boldsymbol{\alpha})-\wh{C}_n(\boldsymbol{\alpha})\bigr|^2
|\wh{g}(\boldsymbol{\alpha})|^2\,dm(\boldsymbol{\alpha}).
\end{align*}
Notice that by Lemma \ref{minorarcestimate}, we have 
\[
 \sum_{n=1}^\infty\int_{\mathbb{T}^k}
\bigl|\wh{M}_n(\boldsymbol{\alpha})\1_{\mathfrak{m}_n}(\boldsymbol{\alpha})\bigr|^2|\wh{g}(\boldsymbol{\alpha})|^2\,dm(\boldsymbol{\alpha})
\leq \sum_{n=1}^\infty q^{-2\delta_0n}\|g\|_{\ell^2(\F_q[t]^k)}^2
\ll_{\mathcal{K},q}\|g\|_{\ell^2(\F_q[t]^k)}^2.
\]

Therefore, by also organizing the frames $\mathcal{F}_n(\mathbf{a},h)$ by the degree of $h$ and using the triangle inequality, it suffices to show 
\begin{equation}\label{framebound}
\sum_{n=1}^\infty\int_{\mathbb{T}^k}
\left|
\1_{s<\rho n}
\sum_{\substack{h\ \mathrm{monic}\\ \deg h=s}}
\sum_{\mathbf{a}\in \mathcal{A}_h}
\Lambda(\mathbf{a},h)
\wh{M}_n\left(\boldsymbol{\alpha}-\frac{\mathbf{a}}{h}\right)
\1_{\mathcal{F}_n(\mathbf{a},h)}(\boldsymbol{\alpha})
\right|^2
|\wh{g}(\boldsymbol{\alpha})|^2\,dm(\boldsymbol{\alpha})
\ll_{\mathcal{K},q}q^{-2\gamma s}\|g\|_{\ell^2(\F_q[t]^k)}^2
\end{equation}
for every fixed $s\ge 0$.

For ease of notation define the multiplier
\[
\wh{F}_{s,n}(\boldsymbol{\alpha})
:=
\1_{s<\rho n}
\sum_{\substack{h\ \mathrm{monic}\\ \deg h=s}}
\sum_{\mathbf{a}\in \mathcal{A}_h}
\Lambda(\mathbf{a},h)
\wh{M}_n\left(\boldsymbol{\alpha}-\frac{\mathbf{a}}{h}\right)
\1_{\mathcal{F}_n(\mathbf{a},h)}(\boldsymbol{\alpha}).
\]
Fix $s\ge 0$ and $\boldsymbol{\alpha}\in\mathbb{T}^k$. 

Notice that if $h_1,h_2$ are monic with 
\[
\deg h_1=\deg h_2=s<\rho n
\]
and $(\mathbf{a}_1,h_1)=(\mathbf{a}_2,h_2)=1$ with 
\[
\boldsymbol{\alpha}\in N_n(\mathbf{a}_1,h_1)\cap N_n(\mathbf{a}_2,h_2),
\]
then the ultrametric inequality easily gives $\mathbf{a}_1/h_1=\mathbf{a}_2/h_2$. 
Thus, for a fixed pair $(s,\boldsymbol{\alpha})$ there is at most one
reduced rational $\mathbf{a}/h$ with $\deg h=s$ and $h$-monic such that there exists an integer $n_0$ satisfying
$s<\rho n_0$ and $\boldsymbol{\alpha}\in N_{n_0}(\mathbf{a},h)$.
If no such rational $\mathbf{a}/h$ exists, then $\wh{F}_{s,n}(\boldsymbol{\alpha})= 0$ for all $n$, and we are done. 
Otherwise, set $\boldsymbol{\beta}:=\boldsymbol{\alpha}-\frac{\mathbf{a}}{h}.$
Whenever $\wh{F}_{s,n}(\boldsymbol{\alpha})\neq 0$, we have
$\boldsymbol{\alpha}\in \mathcal{F}_n(\mathbf{a},h)$, and therefore
\[
\Delta_n(\boldsymbol{\beta})
:=\max_{1\le i\le k}\bigl(\ord(\beta_i)+r_in\bigr)
\ge 0,
\]
because $\boldsymbol{\beta}\notin\mathcal{B}_n$. On the other hand,
$\boldsymbol{\alpha}\in N_n(\mathbf{a},h)$ implies, for $1\leq i\leq k$,
\[
\ord(\beta_i)<-r_in+\frac{n}{4(r^\ast)^2},
\]
and hence
\[
0\le \Delta_n(\boldsymbol{\beta})<\frac{n}{4(r^\ast)^2}<\frac n2.
\]

Applying Corollary \ref{smallbeta_inverse_ultimate} to an index at which the maximum defining
$\Delta_n(\boldsymbol{\beta})$ is attained, after relabeling the exponents if necessary, we obtain a constant $c>0$ depending on $\mathcal{K}$ and $q$ such that for all $n\in \N$,
\[
\bigl|\wh{M}_n(\boldsymbol{\beta})\bigr|
\ll q^{-c\Delta_n(\boldsymbol{\beta})}.
\]
Here we also used that $(r_i,p)=1$ for all $1\leq i\leq k$ so that Corollary \ref{smallbeta_inverse_ultimate} applies to each exponent in $\mathcal{K}$. Combining this with the Gauss sum bound from Proposition~\ref{GaussSumBound} yields
\[
|\wh{F}_{s,n}(\boldsymbol{\alpha})|^2
\ll q^{-2\gamma s}\,q^{-2c\Delta_n(\boldsymbol{\beta})}
\]
whenever $\wh{F}_{s,n}(\boldsymbol{\alpha})\neq 0$. Therefore it is enough to show that
\[
\sum_{\substack{n\ge 1\\ \wh{F}_{s,n}(\boldsymbol{\alpha})\neq 0}}
q^{-2c\Delta_n(\boldsymbol{\beta})}
\ll_{\mathcal{K},q} 1.
\]

Observe that the quantity $\Delta_n(\boldsymbol{\beta})$ increases by at least $1$ at each step.
Indeed, if $i_0$ is an index at which the maximum defining $\Delta_n(\boldsymbol{\beta})$ is attained, then
\begin{align*}
\Delta_{n+1}(\boldsymbol{\beta})
&=\max_{1\le i\le k}\bigl(\ord(\beta_i)+r_i(n+1)\bigr)\\
&\ge \ord(\beta_{i_0})+r_{i_0}(n+1)\\
&\ge \Delta_n(\boldsymbol{\beta})+1.
\end{align*}
Since $\Delta_n(\boldsymbol{\beta})\ge 0$ whenever $\wh{F}_{s,n}(\boldsymbol{\alpha})\neq 0$, it follows that
the nonnegative values $\Delta_n(\boldsymbol{\beta})$ arising from the set
$\{n\ge 1:\wh{F}_{s,n}(\boldsymbol{\alpha})\neq 0\}$ are separated by at least $1$. Therefore
\[
\sum_{\substack{n\ge 1\\ \wh{F}_{s,n}(\boldsymbol{\alpha})\neq 0}} q^{-2c\Delta_n(\boldsymbol{\beta})}
\le \sum_{m\ge 0} q^{-2cm}
\ll_{\mathcal{K},q} 1.
\]
This proves $\displaystyle
\sum_{n=1}^\infty |\wh{F}_{s,n}(\boldsymbol{\alpha})|^2\ll_{\mathcal{K},q} q^{-2\gamma s}.
$
Consequently,
\[
\sum_{n=1}^\infty\int_{\mathbb{T}^k}|\wh{F}_{s,n}(\boldsymbol{\alpha})|^2|\wh g(\boldsymbol{\alpha})|^2\,dm(\boldsymbol{\alpha})
\ll_{\mathcal{K},q} q^{-2\gamma s}\|g\|_{\ell^2(\F_q[t]^k)}^2,
\]
which proves \eqref{framebound}. Finally, by the triangle inequality in $\ell^2(\mathbb{N}\times\mathbb{T}^k)$,
\[
\left(\sum_{n=1}^\infty\int_{\mathbb{T}^k}\left|\sum_{s\ge0}\wh{F}_{s,n}(\boldsymbol{\alpha})\right|^2|\wh g(\boldsymbol{\alpha})|^2\,dm(\boldsymbol{\alpha})\right)^{1/2}
\ll_{\mathcal{K},q}\sum_{s\ge0}q^{-\gamma s}\|g\|_{\ell^2(\F_q[t]^k)}
\ll_{\mathcal{K},q}\|g\|_{\ell^2(\F_q[t]^k)}.
\]
Combining this with the minor arc estimate proves the proposition.
\end{proof}

\section{A monotone sequence of projections}

Recall the definition of the major arc model $C_n$ from the previous section. For each
nonnegative integer $s$, define the degree $s$ piece $C_{s,n}$ by
\[
\widehat{C}_{s,n}(\boldsymbol{\alpha})
:=
\1_{s<\rho n}
\sum_{\substack{h\ \mathrm{monic}\\ \deg h=s}}
\sum_{\boldsymbol{a}\in\mathcal{A}_h}
\Lambda(\boldsymbol{a},h)
\1_{\mathcal{B}_n}\left(\boldsymbol{\alpha}-\frac{\boldsymbol{a}}{h}\right).
\]
Thus
\[
C_n=\sum_{s\ge 0}C_{s,n}.
\]
By summing over $s$, we reduce the proof of the oscillation inequality for $C_n$ to the following.

\begin{prop}\label{Crosc}
For every finite increasing sequence $n_1<\cdots<n_{t_0}$ of positive integers,
\begin{equation}
\|\mathcal{O}_{n_1,\ldots,n_{t_0}}(C_{s,n}g)\|_{\ell^2(\F_q[t]^k)}
\ll q^{-\gamma s}\|g\|_{\ell^2(\F_q[t]^k)}.
\end{equation}
\end{prop}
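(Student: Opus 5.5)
We follow the outline in the sketch of proof. Fix $s\ge 0$ and set $Q_s:=\prod_{h\text{ monic},\,\deg h=s}h$ and $R_s:=\deg Q_s$. The plan is to factor $C_{s,n}$ as a monotone family of projections applied to one fixed vector $w_s$ of small norm, and then prove an oscillation inequality for that family which loses only $\log(2+R_s)$.

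\textbf{Step 1: factorization.} For $n$ with $s<\rho n$, define the projection $D_{s,n}$ by the multiplier
\[
\widehat{D}_{s,n}(\boldsymbol{\alpha}):=\sum_{\substack{h\ \mathrm{monic}\\ \deg h=s}}\sum_{\boldsymbol{a}\in\mathcal{A}_h}\1_{\mathcal{B}_n}\Big(\boldsymbol{\alpha}-\frac{\boldsymbol{a}}{h}\Big).
\]
For $n$ with $s\ge \rho n$, set $D_{s,n}:=0$. Also put
\[
\widehat{w}_s(\boldsymbol{\alpha}):=\sum_{h,\boldsymbol{a}}\Lambda(\boldsymbol{a},h)\1_{\mathcal{B}_{n_s}}\Big(\boldsymbol{\alpha}-\frac{\boldsymbol{a}}{h}\Big)\widehat g(\boldsymbol{\alpha}),
\]
where $n_s$ is the least $n$ with $s<\rho n$.

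Since $\deg h=s<\rho n$, the boxes $\boldsymbol{a}/h+\mathcal{B}_n$ are pairwise disjoint; this is the ultrametric argument used for the major arcs. They also decrease in $n$, because $\mathcal{B}_{n+1}\subset\mathcal{B}_n$. Hence $D_{s,n}$ is a monotone decreasing sequence of orthogonal projections for $n\ge n_s$, and $C_{s,n}g=D_{s,n}w_s$ for every $n$.

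By disjointness and the Gauss sum bound of Proposition~\ref{GaussSumBound}, $|\Lambda(\boldsymbol a,h)|\ll q^{-\gamma' s}$ for some $\gamma'>0$. This gives $\|w_s\|_{\ell^2}\ll q^{-\gamma' s}\|g\|_{\ell^2}$. The index jump at $n_s$, from $0$ to a projection, adds only one term to the oscillation, and that term is controlled by the maximal function below. Proposition~\ref{Crosc} therefore follows, with $\gamma<\gamma'$, once we show
\[
\|\mathcal{O}_{n_1,\ldots,n_{t_0}}(D_{s,n}w)\|_{\ell^2}\ll \log(2+R_s)\,\|w\|_{\ell^2},
\]
since $\log(2+R_s)\ll s$ is absorbed by $q^{-(\gamma'-\gamma)s}$. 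Here we use $R_s\le s q^s$.

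\textbf{Step 2: oscillation from a maximal bound.} For a monotone sequence of projections, write $D_{s,n_j}-D_{s,n_{j+1}}=:E_j$; these are mutually orthogonal projections. Then
\[
\sup_{n_j\le n<n_{j+1}}|D_{s,n}w-D_{s,n_{j+1}}w|=\sup_{n_j\le n<n_{j+1}}|D_{s,n}E_jw|
\]
for $n\ge n_j$, because $D_{s,n}D_{s,n_j}=D_{s,n}$ and $D_{s,n}D_{s,n_{j+1}}=D_{s,n_{j+1}}$. Summing the squares and using the orthogonality $\sum_j\|E_jw\|^2\le\|w\|^2$ reduces Step 2 to the maximal inequality
\[
\Big\|\sup_n|D_{s,n}u|\Big\|_{\ell^2}\ll\log(2+R_s)\|u\|_{\ell^2}.
\]
This is the Quas--Wierdl argument.

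\textbf{Step 3: the maximal inequality, the main obstacle.} We split into small and large scales according to whether $n\le C R_s$, for a constant $C$ depending on $r^\ast$.

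\emph{Small scales.} The roughly $O(R_s)$ projections in this range are nested, so the Rademacher--Menshov inequality for a monotone chain of projections gives the bound with loss $\log(2+R_s)$.

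\emph{Large scales.} Here $r_in\ge R_s$ for every $i$. All rationals $\boldsymbol a/h$ with $\deg h=s$ have denominators dividing $Q_s$, so $\widehat D_{s,n}(\boldsymbol{\alpha})=\Pi(\boldsymbol{\alpha})\1_{\mathcal{B}_n}(\boldsymbol{\alpha}-\boldsymbol{b}/Q_s)$ for $\boldsymbol b/Q_s$ in a fixed set $S$ of residues. Write $\Pi$ for the fixed projection with multiplier equal to $1$ on $\bigcup_{\boldsymbol b/Q_s\in S}(\boldsymbol b/Q_s+\mathcal{B}_{n'})$ at the first large scale $n'$. Then $D_{s,n}=\Pi\circ P_n$, where $P_n$ is the periodized box average. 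Writing $x=Q_s y+z$ with $\deg z<\deg Q_s$, $P_n$ acts in $y$ as the averaging over boxes $\{\deg y_i<r_in-R_s\}$, fibrewise in $z$; this uses $\mathcal{B}_n$ being the Fourier dual of the box $\{\deg x_i<r_in\}$, via \eqref{eq:orthogonality}. So $\sup_n|P_n u|$ is dominated by the maximal function of averages over nested boxes in $\F_q[t]^k$. Nested ultrametric boxes form a martingale filtration, which gives the $\ell^2$ bound; alternatively one can use weak $(1,1)$ via a Vitali covering argument together with Marcinkiewicz interpolation.

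The delicate point is to verify that this modulation and periodization exactly intertwines $D_{s,n}$ with the box averages, with the constant independent of $s$. Disjointness of the translated boxes is what makes this work.
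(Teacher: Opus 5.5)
Your proposal is correct and follows essentially the same route as the paper. It uses the factorization $C_{s,n}g=D_{s,n}w_s$ with $\|w_s\|\ll q^{-\gamma s}\|g\|$, the Quas--Wierdl reduction of the oscillation of a monotone chain of projections to a maximal bound, and the split into Rademacher--Menshov small scales and large scales, where $D_{s,n}$ becomes a $Q_s$-dilated box average of a fixed projected function (the paper's identity $D_{s,n}g=\mathcal{L}_{s,n}G_s$).

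The differences are cosmetic: threshold $R_s$ instead of $R_s^4$, the fixed projection built from boxes at the first large scale rather than from $V_{R_s}$, and Doob as an alternative to Vitali plus Marcinkiewicz. You should, however, write the factorization as $D_{s,n}u=P_n(\Pi u)$, which is legitimate since Fourier multipliers commute, so that the maximal bound is applied to $\Pi u$.
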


The point of the current section is that $C_{s,n}$ can be realized in terms of a monotone sequence
of projections. As in Quas--Wierdl, the oscillation inequality for a monotone sequence of
projections is tractable once one obtains an appropriate maximal inequality.

For each nonnegative integer $s$, define
\[
\mathcal{E}_s:=\left\{\frac{\boldsymbol{a}}{h}: h\ \text{monic},\ \deg h=s,\ \boldsymbol{a}\in\mathcal{A}_h\right\}\subset\mathbb{T}^k.
\]
Also define
\[
N_s:=\min\{n\in\N:s<\rho n\}.
\]
For $n\ge 1$, define a Fourier multiplier operator $D_{s,n}:\ell^2(\F_q[t]^k)\to \ell^2(\F_q[t]^k)$ by
\[
\widehat D_{s,n}(\boldsymbol{\alpha})
:=
\1_{s<\rho n}
\sum_{\substack{h\ \mathrm{monic}\\ \deg h=s}}
\sum_{\boldsymbol{a}\in\mathcal{A}_h}
\1_{\mathcal{B}_n}\left(\boldsymbol{\alpha}-\frac{\boldsymbol{a}}{h}\right).
\]
Observe that $D_{s,n}$ satisfies the following key property as a consequence of our choice of
$\rho$ and the disjointness of the major arcs at scale $n$ when $s<\rho n$:
\begin{equation}\label{disjoint}
\widehat D_{s,n}(\boldsymbol{\alpha})\in\{0,1\}.
\end{equation}
Thus $D_{s,n}$ is a projection. For $n\ge N_s$, the sets
\[
U_{s,n}:=\operatorname{supp}(\widehat D_{s,n})
\]
are decreasing in $n$, because the boxes $\mathcal{B}_n$ shrink as $n$ increases while their
centers remain fixed. We will call a sequence of projections $(P_n)$ monotone if
\[
P_mP_n=P_nP_m=P_n
\qquad\text{whenever }m\le n.
\]
Therefore $(D_{s,n})_{n\ge N_s}$ is a monotone sequence of projections.

To prove Proposition \ref{Crosc}, it suffices to show the oscillation inequality for $D_{s,n}$ with at most
a logarithmic loss in
\[
R_s:=\deg Q_s,
\qquad
Q_s:=\prod_{\substack{h\ \mathrm{monic}\\ \deg h=s}}h.
\]

\begin{prop}\label{Drosc}
For every finite increasing sequence $n_1<\cdots<n_{t_0}$ of positive integers,
\begin{equation}
\|\mathcal{O}_{n_1,\ldots,n_{t_0}}(D_{s,n}g)\|_{\ell^2(\F_q[t]^k)}
\ll \log(2+R_s)\|g\|_{\ell^2(\F_q[t]^k)}.
\end{equation}
\end{prop}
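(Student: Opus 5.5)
We follow the Quas--Wierdl route. First we dispose of the indices $n<N_s$, where $D_{s,n}=0$: oscillation terms with $n_j<N_s$ only see $D_{s,n_{j+1}}g$ or $0$, which is controlled by a single extra term. So it suffices to work with the monotone sequence $(D_{s,n})_{n\ge N_s}$.

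For a monotone sequence of projections $P_n$ and $n_j\le n<n_{j+1}$, we write $P_ng-P_{n_{j+1}}g=P_n(P_{n_j}g-P_{n_{j+1}}g)$. This holds because $P_nP_{n_j}=P_n$ and $P_nP_{n_{j+1}}=P_{n_{j+1}}$. Setting $g_j:=(P_{n_j}-P_{n_{j+1}})g$, we get
\[
\sup_{n_j\le n<n_{j+1}}|P_ng-P_{n_{j+1}}g|\le P^\ast g_j, \qquad P^\ast f:=\sup_n|P_nf|.
\]
Squaring, summing in $j$ and taking norms gives
\[
\|\mathcal{O}(P_ng)\|^2\le \|P^\ast\|^2_{\ell^2\to\ell^2}\sum_j\|g_j\|^2.
\]
The $g_j$ have disjoint Fourier supports $U_{s,n_j}\setminus U_{s,n_{j+1}}$, so $\sum_j\|g_j\|^2\le\|g\|^2$. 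Hence the proposition reduces to the maximal inequality $\|D_s^\ast g\|\ll\log(2+R_s)\|g\|$ announced in the sketch.

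To prove the maximal inequality we split the scales into small scales $N_s\le n<N_s+cR_s$ and large scales $n\ge N_s+cR_s$, with $c$ a suitable multiple of $\max r_i$.

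For the small scales there are $O(R_s)$ indices in a monotone chain of projections. We apply the Rademacher--Menshov dyadic decomposition: any $D_{s,n}g$ is a telescoping sum over $O(\log R_s)$ dyadic blocks of differences $(D_{s,m}-D_{s,m'})g$. Within each dyadic level these differences are orthogonal, so Cauchy--Schwarz gives a bound $\ll\log(2+R_s)\|g\|$.

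For the large scales we use that every $h$ with $\deg h=s$ divides $Q_s$, so all points of $\mathcal{E}_s$ lie in $Q_s^{-1}\F_q[t]^k$ modulo $1$. Once $r_in$ greatly exceeds $R_s$, the boxes $\boldsymbol a/h+\mathcal{B}_n$ are much smaller than the spacing $q^{-R_s}$. We then factor
\[
\widehat D_{s,n}(\boldsymbol\alpha)=\Big(\sum_{\boldsymbol\theta\in\mathcal{E}_s}\1_{\mathcal{B}_{n_0}}(\boldsymbol\alpha-\boldsymbol\theta)\Big)\cdot\Phi_n(\boldsymbol\alpha),
\]
where $\Phi_n$ is the $Q_s$-periodized box multiplier $\sum_{\boldsymbol b\bmod Q_s}\1_{\mathcal B_n}(\boldsymbol\alpha-\boldsymbol b/Q_s)$ and $n_0$ is the threshold scale. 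The first factor is a fixed bounded ($0/1$) projection independent of $n$. The periodized box multiplier $\Phi_n$ corresponds, after splitting $\F_q[t]^k$ into residue classes modulo $Q_s$, to averages $\frac{1}{|B|}\sum_{\boldsymbol y\in B}f(\boldsymbol x+Q_s\boldsymbol y)$ over anisotropic boxes $\{\deg y_i<r_in-R_s\}$. By the ultrametric property these boxes are nested and form a martingale-like filtration, so a Vitali covering argument gives a weak $(1,1)$ bound and Marcinkiewicz interpolation gives an $\ell^2$ maximal bound, uniform in $s$. Combining both ranges yields the $\log(2+R_s)$ bound.

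The main obstacle is the large-scale factorization: checking that $\Phi_n$ agrees with the restriction to the $\mathcal{E}_s$-boxes exactly. This requires disjointness at the threshold scale together with the fact that $\mathcal{B}_n\subset\mathcal{B}_{n_0}$. It is also where the threshold $cR_s$ is forced.
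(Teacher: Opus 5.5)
Your proposal is correct and is essentially the paper's argument. The identity $D_{s,n}g-D_{s,n_{j+1}}g=D_{s,n}(D_{s,n_j}g-D_{s,n_{j+1}}g)$, together with disjointness of the sets $U_{s,n_j}\setminus U_{s,n_{j+1}}$, reduces everything to the maximal inequality for $D_s^\ast$. That inequality is then proved by a Rademacher--Menshov argument on small scales and, on large scales, by writing $D_{s,n}$ as a fixed projection followed by the $Q_s$-dilated box averages $\mathcal{L}_{s,n}$ (your $\Phi_n$ is exactly the multiplier of $\mathcal{L}_{s,n}$), which are controlled by Vitali covering and Marcinkiewicz interpolation.

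The differences are cosmetic. You cut at $N_s+cR_s$ instead of $R_s^4$, use the $\mathcal{B}_{n_0}$-neighbourhoods of $\mathcal{E}_s$ in place of the $V_{R_s}$-neighbourhoods defining $G_s$, and verify the factorization on the Fourier side rather than through the splitting $f_i=Q_su_i+b_i$. One small inaccuracy: the interval straddling $N_s$ can also contain indices $n\ge N_s$, not only $0$ or $D_{s,n_{j+1}}g$. Its contribution is still at most $2D_s^\ast g$, exactly as in the paper.
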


\begin{proof}[Proof of Proposition \ref{Crosc} assuming Proposition \ref{Drosc}]
Define $w_s:\F_q[t]^k\to \mathbb{C}$ by
\[
\widehat{w_s}(\boldsymbol{\alpha})
:=
\sum_{\substack{h\ \mathrm{monic}\\ \deg h=s}}
\sum_{\boldsymbol{a}\in\mathcal{A}_h}
\Lambda(\boldsymbol{a},h)
\1_{\mathcal{B}_{N_s}}\left(\boldsymbol{\alpha}-\frac{\boldsymbol{a}}{h}\right)
\widehat{g}(\boldsymbol{\alpha}).
\]
Since the boxes in the definition of $\widehat{w_s}$ are pairwise disjoint, the Gauss sum bound from
Proposition~\ref{GaussSumBound} implies
\[
\|w_s\|_{\ell^2(\F_q[t]^k)}^2\ll q^{-2\gamma s}\|g\|_{\ell^2(\F_q[t]^k)}^2.
\]
Moreover, for every $n\ge N_s$ we have
\[
D_{s,n}w_s=C_{s,n}g.
\]
For $n<N_s$, both sides are zero. Therefore Proposition \ref{Drosc} yields
\[
\|\mathcal{O}_{n_1,\ldots,n_{t_0}}(C_{s,n}g)\|_{\ell^2(\F_q[t]^k)}
=
\|\mathcal{O}_{n_1,\ldots,n_{t_0}}(D_{s,n}w_s)\|_{\ell^2(\F_q[t]^k)}
\ll \log(2+R_s)\|w_s\|_{\ell^2(\F_q[t]^k)}.
\]
Thus
\[
\|\mathcal{O}_{n_1,\ldots,n_{t_0}}(C_{s,n}g)\|_{\ell^2(\F_q[t]^k)}
\ll \log(2+R_s)q^{-\gamma s}\|g\|_{\ell^2(\F_q[t]^k)}.
\]
Since there are exactly $q^s$ monic polynomials of degree $s$, we have
\[
R_s=sq^s.
\]
Hence, after possibly decreasing $\gamma$, the logarithmic factor may be absorbed into the
exponential decay, and we recover Proposition \ref{Crosc}.
\end{proof}

Because $D_{s,n}$ is a monotone sequence of projections for $n\ge N_s$, to prove Proposition
\ref{Drosc} it suffices to show that $D_{s,n}$ satisfies a maximal inequality. Define the maximal
operator $D_s^\ast$ by
\[
D_s^{\ast}g(\boldsymbol{x})
:=
\sup_{n\ge N_s}|D_{s,n}g(\boldsymbol{x})|,
\]
for $\boldsymbol{x}\in\F_q[t]^k$.

\begin{prop}\label{Dmax}
\begin{equation}
\|D_s^{\ast}g\|_{\ell^2(\F_q[t]^k)}
\ll \log(2+R_s)\|g\|_{\ell^2(\F_q[t]^k)}.
\end{equation}
\end{prop}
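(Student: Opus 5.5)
Following the sketch in Section 4, I would split the range $n\ge N_s$ into small scales $N_s\le n<n_\ast$ and large scales $n\ge n_\ast$, where $n_\ast$ is chosen so that $q^{n_\ast}$ is comparable to $q^{R_s}$. Concretely, I would take $n_\ast$ to be the least integer with $n_\ast\ge N_s$ and $n_\ast\ge R_s$, where $R_s=\deg Q_s$. Then I would bound
\[
D_s^\ast g\le \sup_{N_s\le n<n_\ast}|D_{s,n}g|+\sup_{n\ge n_\ast}|D_{s,n}g|.
\]

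\emph{Small scales.} The projections $D_{s,n}$, $N_s\le n<n_\ast$, are monotone, so the differences $D_{s,n}-D_{s,n+1}$ are orthogonal projections onto mutually orthogonal subspaces. The standard Rademacher--Menshov dyadic decomposition of the index interval then gives
\[
\Big\|\sup_{N_s\le n<n_\ast}|D_{s,n}g|\Big\|_{\ell^2}\ll \log\big(2+(n_\ast-N_s)\big)\|g\|_{\ell^2}\ll \log(2+R_s)\|g\|_{\ell^2}.
\]
This holds because there are at most $R_s+1$ scales.

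\emph{Large scales.} For $n\ge n_\ast$, I would use that every denominator $h$ of degree $s$ divides $Q_s$. I would write $\mathcal{B}_n$ as a product of balls $\{\ord\beta_i<-r_in\}$. Since $n\ge R_s=\deg Q_s$, the box $\mathcal B_n$ is small compared with the spacing $1/Q_s$, so the multiplier factors as
\[
\wh D_{s,n}(\boldsymbol\alpha)=\wh{\Pi}(\boldsymbol\alpha)\,\wh{L}_n(\boldsymbol\alpha).
\]
Here $\wh L_n(\boldsymbol\alpha)=\sum_{\boldsymbol b/Q_s}\1_{\mathcal B_n}(\boldsymbol\alpha-\boldsymbol b/Q_s)$ is summed over all points $\boldsymbol b/Q_s$ with $\deg b_i<\deg Q_s$. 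The factor $\wh\Pi$ is a fixed multiplier taking values in $\{0,1\}$ that selects the boxes around the points of $\mathcal E_s$, for instance $\wh\Pi=\wh D_{s,n_\ast}$. So $\|\Pi\|\le1$, and it suffices to bound $\sup_{n\ge n_\ast}|L_n g|$.

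The operator $L_n$ is explicit on the physical side. The inverse transform of $\1_{\mathcal B_n}$ is the normalized average over the box $\{\deg x_i<r_in\}$. Summing over the translates $\boldsymbol b/Q_s$ restricts this average to $\boldsymbol x\in Q_s\F_q[t]^k$ and multiplies it by $q^{kR_s}$. Thus $L_n g(\boldsymbol y)$ is the uniform average of $g(\boldsymbol y+Q_s\boldsymbol x)$ over $\deg x_i<r_in-R_s$, which is a genuine average because $n\ge R_s$. Splitting $\F_q[t]^k$ into cosets of $Q_s\F_q[t]^k$, this is a linear ergodic average for the lattice action. Its maximal function is then controlled by the maximal function of box averages on $\F_q[t]^k$. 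Those averages are nested ultrametric boxes $\{\deg x_i<r_im\}$, a single increasing chain of subgroups, so a Vitali-type covering argument applies. Since two such boxes are either disjoint or nested, the covering is in fact trivial, and it gives the weak $(1,1)$ bound. Marcinkiewicz interpolation with the trivial $\ell^\infty$ bound then gives an $\ell^2$ bound with an absolute constant. Alternatively, these averages are conditional expectations with respect to a decreasing filtration, and Doob's inequality gives the same bound.

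I expect the main obstacle to be the factorization step. It requires choosing the cutoff so that every box $\boldsymbol a/h+\mathcal B_n$ with $n\ge n_\ast$ is exactly a box $\boldsymbol b/Q_s+\mathcal B_n$, and that the selection multiplier $\wh\Pi$ is independent of $n$. This needs the spacing of $\{\boldsymbol b/Q_s\}$, which is at least $q^{-R_s}$ in $\ord$, to exceed the box radius. That condition is guaranteed once $r_in>R_s$, which is why I would take $n_\ast$ slightly larger than $R_s$ if necessary. Combining the two parts proves Proposition \ref{Dmax}.
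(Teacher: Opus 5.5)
Your proposal is correct and follows essentially the paper's route: Rademacher--Menshov on the small scales, then on the large scales writing $D_{s,n}$ as the lattice box average $\mathcal{L}_{s,n}$ over $Q_s\F_q[t]^k$ applied to a fixed $\ell^2$-contraction of $g$, and bounding its maximal function by the nested-box covering (weak $(1,1)$) argument plus Marcinkiewicz. The only differences are cosmetic: you cut at $n_\ast\approx\max\{N_s,R_s\}$ where the paper cuts at $\max\{N_s,R_s^4\}$, and your fixed selector is $\widehat D_{s,n_\ast}$ where the paper uses $\sum_{\boldsymbol a/h\in\mathcal E_s}V_{R_s}(\boldsymbol\alpha-\boldsymbol a/h)$.
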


\begin{proof}[Proof of Proposition \ref{Drosc} assuming Proposition \ref{Dmax}]
Since $D_{s,n}$ is a monotone sequence of projections for $n\ge N_s$, if
$N_s\le m_1\le m_2$, then
\[
D_{s,m_1}D_{s,m_2}=D_{s,m_2}D_{s,m_1}=D_{s,m_2}.
\]
Therefore, if $N_s\le n_j\le n\le n_{j+1}$, then
\[
D_{s,n}g-D_{s,n_{j+1}}g
=
D_{s,n}\bigl(D_{s,n_j}g-D_{s,n_{j+1}}g\bigr).
\]
The intervals in the oscillation functional which lie below $N_s$ contribute nothing, and there is
at most one interval which crosses the threshold $N_s$. The contribution of this crossing interval
is bounded by $2D_s^\ast g$. Thus, by Proposition \ref{Dmax},
\begin{align*}
\|\mathcal{O}_{n_1,\ldots,n_{t_0}}(D_{s,n}g)\|_{\ell^2(\F_q[t]^k)}^2
&\ll
(\log(2+R_s))^2\|g\|_{\ell^2(\F_q[t]^k)}^2\\
&\quad+
\sum_j\sum_{\boldsymbol{x}\in \F_q[t]^k}
\sup_{n_j\le n<n_{j+1}}
\bigl|D_{s,n}(D_{s,n_j}g-D_{s,n_{j+1}}g)(\boldsymbol{x})\bigr|^2,
\end{align*}
where the sum is restricted to those $j$ with $n_j\ge N_s$. Applying Proposition \ref{Dmax} again, we obtain
\begin{align*}
\|\mathcal{O}_{n_1,\ldots,n_{t_0}}(D_{s,n}g)\|_{\ell^2(\F_q[t]^k)}^2
&\ll
(\log(2+R_s))^2
\left(
\|g\|_{\ell^2(\F_q[t]^k)}^2
+
\sum_j
\|D_{s,n_j}g-D_{s,n_{j+1}}g\|_{\ell^2(\F_q[t]^k)}^2
\right).
\end{align*}
Finally, by Plancherel,
\begin{align*}
\sum_j
\|D_{s,n_j}g-D_{s,n_{j+1}}g\|_{\ell^2(\F_q[t]^k)}^2
&=
\int_{\mathbb{T}^k}|\widehat g(\boldsymbol{\alpha})|^2
\sum_j
\big|\widehat D_{s,n_j}(\boldsymbol{\alpha})
-\widehat D_{s,n_{j+1}}(\boldsymbol{\alpha})\big|^2
\,dm(\boldsymbol{\alpha}).
\end{align*}
It remains to show
\begin{equation}\label{disjointsum}
\sum_j
\big|\widehat D_{s,n_j}(\boldsymbol{\alpha})
-\widehat D_{s,n_{j+1}}(\boldsymbol{\alpha})\big|^2
\le 1
\end{equation}
for all $\boldsymbol{\alpha}\in\mathbb T^k$, where the sum is over those $j$ with $n_j\ge N_s$.
Set
\[
U_j:=\operatorname{supp}(\widehat D_{s,n_j}).
\]
Since $(D_{s,n})_{n\ge N_s}$ is a monotone sequence of projections, we have
$\widehat D_{s,n_j}(\boldsymbol{\alpha})\in\{0,1\}$ and the sets $U_j$ are decreasing. Hence
\[
\big|\widehat D_{s,n_j}(\boldsymbol{\alpha})
-\widehat D_{s,n_{j+1}}(\boldsymbol{\alpha})\big|^2
=
\1_{U_j\setminus U_{j+1}}(\boldsymbol{\alpha}),
\]
and the sets $U_j\setminus U_{j+1}$ are pairwise disjoint as $j$ varies. Therefore
\[
\sum_j
\big|\widehat D_{s,n_j}(\boldsymbol{\alpha})
-\widehat D_{s,n_{j+1}}(\boldsymbol{\alpha})\big|^2
\le 1.
\]
This proves \eqref{disjointsum} and hence Proposition \ref{Drosc}.
\end{proof}

\section{The small scales}
We have reduced the problem to showing Proposition \ref{Dmax}, the maximal inequality for $D_{s,n}$. The idea is to split $n\ge N_s$ into two cases based on the size of $n$, which we call the \textit{small scales} and \textit{large scales}. We treat the small scales first. This is a standard Rademacher-Menshov type argument, but we include it for the sake of completeness. 

Recall from the previous section that
\[
R_s=\deg Q_s,
\qquad
Q_s=\prod_{\substack{h\ \mathrm{monic}\\ \deg h=s}}h.
\]
The small scales are the integers $n$ with
\[
N_s\le n<R_s^4.
\]
If $N_s\ge R_s^4$, there is nothing to prove in this range, so assume from now on that
$N_s<R_s^4$. Set
\[
T:=R_s^4
\qquad\text{and}\qquad
L:=T-N_s.
\]
For convenience, define
\[
P_0:=0,
\qquad
P_m:=D_{s,T-m}\quad (1\le m\le L),
\]
and set
\[
P_m:=D_{s,N_s}\quad (m>L).
\]
Then $(P_m)$ is an increasing sequence of projections. Moreover,
\[
\sup_{N_s\le n<R_s^4}|D_{s,n}g(\boldsymbol{x})|
=
\sup_{1\le m\le L}|P_mg(\boldsymbol{x})|
\le
\sup_{0\le m\le 2^M}|P_mg(\boldsymbol{x})|,
\]
where
\[
M:=\left\lceil \log_2 L\right\rceil.
\]

For each $0\le n\le 2^M$, by considering the binary expansion of $n$, we may write $P_n$ in the form
\[
P_n=\sum_{\ell=0}^M
\bigl(P_{(d_\ell+1)2^\ell}-P_{d_\ell2^\ell}\bigr),
\]
where $d_\ell\in\{0,1,\ldots,2^{M-\ell}-1\}$ for each $\ell$, with the convention that some of these terms may be absent. Consequently, by Cauchy's inequality,
\[
\sup_{0\le n\le 2^M}|P_ng(\boldsymbol{x})|^2
\le
(M+1)
\sum_{\ell=0}^M
\sum_{0\le d<2^{M-\ell}}
\big|P_{(d+1)2^\ell}g(\boldsymbol{x})-P_{d2^\ell}g(\boldsymbol{x})\big|^2.
\]
By the same argument used to prove \eqref{disjointsum}, for each $\boldsymbol{\alpha}\in\mathbb{T}^k$ one has
\[
\sum_{d=0}^{2^{M-\ell}-1}
\big|\widehat P_{(d+1)2^\ell}(\boldsymbol{\alpha})-\widehat P_{d2^\ell}(\boldsymbol{\alpha})\big|^2\le 1.
\]
Combining these estimates and applying Plancherel yields
\begin{align*}
\sum_{\boldsymbol{x}\in \F_q[t]^k}\sup_{N_s\le n<R_s^4}|D_{s,n}g(\boldsymbol{x})|^2
&\le
(M+1)
\sum_{\ell=0}^M
\sum_{0\le d<2^{M-\ell}}
\sum_{\boldsymbol{x}\in \F_q[t]^k}
\big|P_{(d+1)2^\ell}g(\boldsymbol{x})-P_{d2^\ell}g(\boldsymbol{x})\big|^2\\
&=
(M+1)
\sum_{\ell=0}^M
\int_{\mathbb{T}^k}|\widehat g(\boldsymbol{\alpha})|^2
\sum_{0\le d<2^{M-\ell}}
\big|\widehat P_{(d+1)2^\ell}(\boldsymbol{\alpha})-\widehat P_{d2^\ell}(\boldsymbol{\alpha})\big|^2
\,dm(\boldsymbol{\alpha})\\
&\le
(M+1)^2
\int_{\mathbb{T}^k}|\widehat g(\boldsymbol{\alpha})|^2\,dm(\boldsymbol{\alpha})\\
&=
(M+1)^2
\sum_{\boldsymbol{x}\in \F_q[t]^k}|g(\boldsymbol{x})|^2.
\end{align*}
Since $L\le R_s^4$, we have
\[
M+1\ll \log(2+R_s).
\]
Thus the contribution of the small scales is bounded by a logarithmic factor, which is acceptable.

\section{The large scales}

We now treat the \emph{large scales}, namely the range
\[
n\ge \max\{N_s,R_s^4\}.
\]
At this point the argument from Appendix~B of Bergelson's survey simplifies slightly in the function-field setting: because the geometric sum identity on $\F_q[t]$ is exact, the auxiliary linear operators that appear in \cite[Appendix~B]{Bergelson2006}
are already built into the projections $D_{s,n}$.

For ease of reference, we recall that $D_{s,n}:\ell^2(\F_q[t]^k)\to \ell^2(\F_q[t]^k)$ is given by
\[
\widehat D_{s,n}(\boldsymbol{\alpha})
=
\1_{s<\rho n}
\sum_{\substack{h\ \mathrm{monic}\\ \deg h=s}}
\sum_{\boldsymbol{a}\in\mathcal{A}_h}
\1_{\mathcal{B}_n}\left(\boldsymbol{\alpha}-\frac{\boldsymbol{a}}{h}\right),
\]
where
\[
\mathcal{B}_n
=
\{(\beta_1,\ldots,\beta_k)\in\mathbb{T}^k:
\ord \beta_i<-r_in\ \forall\,1\leq i\leq k\},
\]
and
\[
Q_s=\prod_{\substack{h\ \mathrm{monic}\\ \deg h=s}}h
\qquad\text{and}\qquad
R_s=\deg Q_s.
\]
For $\boldsymbol{\beta}=(\beta_1,\ldots,\beta_k)\in\mathbb{T}^k$, define
\[
V_{R_s}(\boldsymbol{\beta})
:=
\prod_{i=1}^k\1_{\ord(\beta_i)<-R_s}.
\]
By the geometric sum identity in each coordinate,
\[
V_{R_s}(\boldsymbol{\beta})
=
q^{-kR_s}
\sum_{\substack{\boldsymbol{b}=(b_1,\ldots,b_k)\in\F_q[t]^k\\ \deg b_i<R_s\ \forall i}}
e(b_1\beta_1+\cdots+b_k\beta_k),
\]
and similarly
\[
\1_{\mathcal{B}_n}(\boldsymbol{\beta})
=
q^{-(r_1+\cdots+r_k)n}
\sum_{\substack{\boldsymbol{f}=(f_1,\ldots,f_k)\in\F_q[t]^k\\ \deg f_i<r_in\ \forall i}}
e(f_1\beta_1+\cdots+f_k\beta_k).
\]

Now, for $n\ge \max\{N_s,R_s^4\}$, the the geometric sum identity yields
\begin{align*}
\widehat{D_{s,n}g}(\boldsymbol{\alpha})
&=
\sum_{\substack{h\ \mathrm{monic}\\ \deg h=s}}
\sum_{\boldsymbol{a}\in\mathcal{A}_h}
\1_{\mathcal{B}_n}\left(\boldsymbol{\alpha}-\frac{\boldsymbol{a}}{h}\right)\widehat{g}(\boldsymbol{\alpha})\\
&=
q^{-(r_1+\cdots+r_k)n}
\sum_{\substack{h\ \mathrm{monic}\\ \deg h=s}}
\sum_{\boldsymbol{a}\in\mathcal{A}_h}
\sum_{\substack{\boldsymbol{f}=(f_1,\ldots,f_k)\in\F_q[t]^k\\ \deg f_i<r_in\ \forall i}}
e\left(\boldsymbol{f}\cdot \left(\boldsymbol{\alpha}-\frac{\boldsymbol{a}}{h}\right) \right)\widehat{g}(\boldsymbol{\alpha})\\
&=
q^{-(r_1+\cdots+r_k)n}
\sum_{\substack{h\ \mathrm{monic}\\ \deg h=s}}
\sum_{\boldsymbol{a}\in\mathcal{A}_h}
\sum_{\substack{\boldsymbol{f}=(f_1,\ldots,f_k)\in\F_q[t]^k\\ \deg f_i<r_in\ \forall i}}
e\left(-\boldsymbol{f}\cdot\frac{\boldsymbol{a}}{h}\right)
e\left(\boldsymbol{f}\cdot \boldsymbol{\alpha}\right)\widehat{g}(\boldsymbol{\alpha}).
\end{align*}
Thus, by Fourier inversion term by term, we have
\begin{equation}\label{Drnphysical}
D_{s,n}g(\boldsymbol{x})
=
q^{-(r_1+\cdots+r_k)n}
\sum_{\substack{h\ \mathrm{monic}\\ \deg h=s}}
\sum_{\boldsymbol{a}\in\mathcal{A}_h}
\sum_{\substack{\boldsymbol{f}\in\F_q[t]^k\\ \deg f_i<r_in\ \forall i}}
e\left(-\boldsymbol{f}\cdot\frac{\boldsymbol{a}}{h}\right)
g(\boldsymbol{x}+\boldsymbol{f}).
\end{equation}
We may write each coordinate uniquely as
\[
f_i=Q_su_i+b_i,
\qquad
\deg b_i<R_s,
\qquad
\deg u_i<r_in-R_s,
\]
because $n\ge R_s^4$ implies $r_in\ge R_s$ for every $i$. Moreover,
\[
e\left(-Q_su_i\frac{a_i}{h}\right)=1
\]
for every monic $h$ with $\deg h=s$, since $h\mid Q_s$. Substituting this decomposition into \eqref{Drnphysical}, we obtain the exact identity
\[
D_{s,n}g(\boldsymbol{x})=\mathcal{L}_{s,n}G_s(\boldsymbol{x}),
\]
where
\[
\mathcal{L}_{s,n}F(\boldsymbol{x})
:=
q^{-\sum_{i=1}^k(r_in-R_s)}
\sum_{\substack{\boldsymbol{u}=(u_1,\ldots,u_k)\in\F_q[t]^k\\ \deg u_i<r_in-R_s\ \forall i}}
F(\boldsymbol{x}+Q_s\boldsymbol{u}),
\]
and
\[
G_s(\boldsymbol{x})
:=
q^{-kR_s}
\sum_{\substack{\boldsymbol{b}=(b_1,\ldots,b_k)\in\F_q[t]^k\\ \deg b_i<R_s\ \forall i}}
\sum_{\substack{h\ \mathrm{monic}\\ \deg h=s}}
\sum_{\boldsymbol{a}\in\mathcal{A}_h}
e\left(-\boldsymbol{b}\cdot\frac{\boldsymbol{a}}{h}\right)
g(\boldsymbol{x}+\boldsymbol{b}).
\]
Applying the linear maximal inequality proved Appendix \ref{linearmax} to the operators $\mathcal{L}_{s,n}$ gives
\begin{equation}\label{largescalemax}
\left\|\sup_{n\ge \max\{N_s,R_s^4\}}|D_{s,n}g|\right\|_{\ell^2(\F_q[t]^k)}
\ll
\|G_s\|_{\ell^2(\F_q[t]^k)}.
\end{equation}
It remains to estimate $G_s$. By construction and the geometric sum identity,
\[
\widehat{G_s}(\boldsymbol{\alpha})
=
\sum_{\substack{h\ \mathrm{monic}\\ \deg h=s}}
\sum_{\boldsymbol{a}\in\mathcal{A}_h}
V_{R_s}\left(\boldsymbol{\alpha}-\frac{\boldsymbol{a}}{h}\right)\widehat{g}(\boldsymbol{\alpha}).
\]
There are exactly $q^s$ monic polynomials of degree $s$, so
\[
R_s=sq^s\ge 2s.
\]
Hence the same separation argument used to prove \eqref{disjoint} shows that the boxes
\[
\left\{\boldsymbol{\alpha}\in\mathbb{T}^k:
V_{R_s}\left(\boldsymbol{\alpha}-\frac{\boldsymbol{a}}{h}\right)=1
\right\},
\qquad
h\ \mathrm{monic},\ \deg h=s,\ \boldsymbol{a}\in\mathcal{A}_h,
\]
are pairwise disjoint. Therefore
\[
\sum_{\substack{h\ \mathrm{monic}\\ \deg h=s}}
\sum_{\boldsymbol{a}\in\mathcal{A}_h}
V_{R_s}\left(\boldsymbol{\alpha}-\frac{\boldsymbol{a}}{h}\right)
\le 1
\qquad \text{for every }\boldsymbol{\alpha}\in\mathbb{T}^k.
\]
Plancherel gives
\[
\|G_s\|_{\ell^2(\F_q[t]^k)}
\le
\|g\|_{\ell^2(\F_q[t]^k)}.
\]
Combining this with \eqref{largescalemax}, we obtain
\[
\left\|\sup_{n\ge \max\{N_s,R_s^4\}}|D_{s,n}g|\right\|_{\ell^2(\F_q[t]^k)}
\ll
\|g\|_{\ell^2(\F_q[t]^k)}.
\]
Together with the small-scale estimate proved in the previous section, this yields Proposition \ref{Dmax}.

\appendix 
\section{A Hardy-Littlewood type maximal inequality}\label{linearmax}
The goal of this section is to prove a standard Hardy--Littlewood type maximal inequality in the function field setting, but we record the proof for the sake of completeness.

Recall the definition of $\mathcal{L}_{s,n}$ for
\[
n\ge H_s:=\max\{N_s,R_s^4\},
\]
where
\[
Q_s=\prod_{\substack{h\ \mathrm{monic}\\ \deg h=s}}h
\qquad\text{and}\qquad
R_s=\deg Q_s.
\]
We have
\[
\mathcal{L}_{s,n}F(\boldsymbol{x})
=
q^{-\sum_{i=1}^k(r_in-R_s)}
\sum_{\substack{\boldsymbol{u}=(u_1,\ldots,u_k)\in\F_q[t]^k\\ \deg u_i<r_in-R_s\ \forall i}}
F(\boldsymbol{x}+Q_s\boldsymbol{u}).
\]
Define the maximal operator $\mathcal{L}_s^\ast$ by
\[
\mathcal{L}_s^\ast F(\boldsymbol{x})
:=
\sup_{n\ge H_s}\mathcal{L}_{s,n}|F|(\boldsymbol{x}).
\]
Since
\[
|\mathcal{L}_{s,n}F|\le \mathcal{L}_{s,n}|F|,
\]
the estimate \eqref{linearmax} follows from
\[
\left\|\mathcal{L}^\ast_sF\right\|_{\ell^2(\F_q[t]^k)}
\ll \|F\|_{\ell^2(\F_q[t]^k)}.
\]
The operator $\mathcal{L}_s^\ast$ is strong type $(\infty,\infty)$ with norm at most $1$. Thus, by Marcinkiewicz interpolation, it suffices to prove that $\mathcal{L}_s^\ast$ is weak type $(1,1)$. This follows from a Vitali type covering argument.

\begin{prop}
Let $\mathcal{L}_s^\ast$ be defined as above. Let $F\in \ell^1(\F_q[t]^k)$. Then, for every $\alpha>0$,
\[
\mu\bigl(\{\boldsymbol{x}\in \F_q[t]^k: \mathcal{L}^\ast_sF(\boldsymbol{x})>\alpha\}\bigr)
\leq
\frac{1}{\alpha}
\sum_{\boldsymbol{x}\in \F_q[t]^k}|F(\boldsymbol{x})|,
\]
where $\mu$ is the counting measure on $\F_q[t]^k$.
\end{prop}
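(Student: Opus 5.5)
The plan is to exploit the non-archimedean structure: the averaging sets are nested subgroups, so a Vitali-type covering holds with no loss at all. That is why the weak $(1,1)$ constant is exactly $1$.

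For each $n\ge H_s$, set
\[
W_n:=\{Q_s\boldsymbol{u}:\boldsymbol{u}\in\F_q[t]^k,\ \deg u_i<r_in-R_s\ \forall i\}.
\]
This is an additive subgroup of $\F_q[t]^k$ with $|W_n|=q^{\sum_i(r_in-R_s)}$. The averaging operator is then
\[
\mathcal{L}_{s,n}|F|(\boldsymbol{x})=|W_n|^{-1}\sum_{\boldsymbol{y}\in \boldsymbol{x}+W_n}|F(\boldsymbol{y})|,
\]
the average of $|F|$ over the coset $\boldsymbol{x}+W_n$. Since each bound $r_in-R_s$ increases with $n$, the subgroups are nested: $W_n\subseteq W_{n'}$ whenever $n\le n'$. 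Consequently, any two cosets $\boldsymbol{x}+W_n$ and $\boldsymbol{x}'+W_{n'}$ are either disjoint or nested, with the one of smaller index contained in the other. This is the ultrametric replacement for the Vitali lemma.

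The covering argument runs as follows. Fix $\alpha>0$. By monotone convergence it suffices to bound $|E\cap B|$ for each finite set $B$, where
\[
E:=\{\boldsymbol{x}:\mathcal{L}^\ast_sF(\boldsymbol{x})>\alpha\}.
\]
Let $\mathcal{C}$ be the family of cosets $\boldsymbol{x}+W_n$ with $n\ge H_s$ and
\[
\sum_{\boldsymbol{x}+W_n}|F|>\alpha|W_n|.
\]
Every point of $E$ lies in some member of $\mathcal{C}$. Each coset $C\in\mathcal{C}$ satisfies $|C|<\alpha^{-1}\|F\|_{\ell^1}$, so the sizes of these cosets, and hence the indices $n$ that occur, are bounded. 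Therefore every coset in $\mathcal{C}$ is contained in a maximal one. The maximal cosets are pairwise disjoint by the nesting dichotomy, and they cover $E$. Summing over them gives
\[
|E|\le\sum_{C\text{ maximal}}|C|<\alpha^{-1}\sum_{C\text{ maximal}}\sum_{C}|F|\le\alpha^{-1}\|F\|_{\ell^1}.
\]
This is the claimed estimate.

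The main point to verify is that maximal elements exist and are disjoint. Existence needs $F\in\ell^1$ and the fact that $|W_n|\to\infty$ (this uses $r_i\ge1$), so that only finitely many scales qualify above each point. Disjointness needs the check that two cosets of nested subgroups are either disjoint or nested: if $\boldsymbol{x}+W_n$ and $\boldsymbol{x}'+W_{n'}$ meet with $n\le n'$, then $\boldsymbol{x}-\boldsymbol{x}'\in W_{n'}$, so $\boldsymbol{x}+W_n\subseteq\boldsymbol{x}'+W_{n'}$. Everything else is bookkeeping.
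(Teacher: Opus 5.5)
Your proof is correct and is essentially the paper's argument. Both rest on the fact that cosets of the nested subgroups $B_{Q_s,n}$ (your $W_n$) are either disjoint or nested, so a lossless Vitali-type selection of pairwise disjoint qualifying cosets gives constant $1$. The only difference is bookkeeping: the paper fixes one scale $n_{\boldsymbol{x}}$ per point, selects greedily (largest scale first) on an arbitrary finite $T\subseteq E_\alpha$, and takes the supremum over $T$; you instead use the bound $|C|<\alpha^{-1}\|F\|_{\ell^1}$ to cap the scales and pass directly to maximal qualifying cosets, which also makes your preliminary reduction to finite sets unnecessary.
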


\begin{proof}
Let $\alpha>0$ and define
\[
E_\alpha:=\{\boldsymbol{x}\in \F_q[t]^k: \mathcal{L}^\ast_sF(\boldsymbol{x})>\alpha\}.
\]
For each $\boldsymbol{x}\in E_\alpha$, choose $n_{\boldsymbol{x}}\ge H_s$ such that
\[
\mathcal{L}_{s,n_{\boldsymbol{x}}}|F|(\boldsymbol{x})>\alpha.
\]
For $n\ge H_s$, define
\[
B_{Q_s,n}
:=
\{Q_s\boldsymbol{u}:\boldsymbol{u}\in\F_q[t]^k,\ \deg u_i<r_in-R_s\ \forall 1\le i\le k\}.
\]
Then $B_{Q_s,n}$ is an additive subgroup of $\F_q[t]^k$, and
\[
|B_{Q_s,n}|=q^{\sum_{i=1}^k(r_in-R_s)}.
\]
The condition $\mathcal{L}_{s,n_{\boldsymbol{x}}}|F|(\boldsymbol{x})>\alpha$ is therefore equivalent to
\[
\sum_{\boldsymbol{u}\in B_{Q_s,n_{\boldsymbol{x}}}}
|F(\boldsymbol{x}+\boldsymbol{u})|
>
\alpha |B_{Q_s,n_{\boldsymbol{x}}}|.
\]

We now use the nesting of these sets. If
\[
\boldsymbol{x}_1+B_{Q_s,n_{\boldsymbol{x}_1}}
\cap
\boldsymbol{x}_2+B_{Q_s,n_{\boldsymbol{x}_2}}
\neq \emptyset
\]
and $n_{\boldsymbol{x}_1}\ge n_{\boldsymbol{x}_2}$, then
\[
B_{Q_s,n_{\boldsymbol{x}_2}}\subseteq B_{Q_s,n_{\boldsymbol{x}_1}}.
\]
Moreover, the intersection assumption gives
\[
\boldsymbol{x}_2-\boldsymbol{x}_1\in B_{Q_s,n_{\boldsymbol{x}_1}}.
\]
Hence
\[
\boldsymbol{x}_2+B_{Q_s,n_{\boldsymbol{x}_2}}
\subseteq
\boldsymbol{x}_1+B_{Q_s,n_{\boldsymbol{x}_1}}.
\]

Now let $T\subseteq E_\alpha$ be finite. By selecting greedily a point with maximal
$n_{\boldsymbol{x}}$ among the points not yet covered, we may find finitely many
$\boldsymbol{x}_1,\ldots,\boldsymbol{x}_m\in T$ such that the sets
\[
\boldsymbol{x}_j+B_{Q_s,n_{\boldsymbol{x}_j}}
\qquad (1\le j\le m)
\]
are pairwise disjoint and cover $T$. Therefore
\begin{align*}
|T|
&\le
\sum_{j=1}^m |B_{Q_s,n_{\boldsymbol{x}_j}}|\\
&<
\frac{1}{\alpha}
\sum_{j=1}^m
\sum_{\boldsymbol{u}\in B_{Q_s,n_{\boldsymbol{x}_j}}}
|F(\boldsymbol{x}_j+\boldsymbol{u})|\\
&\le
\frac{1}{\alpha}
\sum_{\boldsymbol{x}\in\F_q[t]^k}|F(\boldsymbol{x})|,
\end{align*}
where the last inequality uses the pairwise disjointness of the selected translates. Taking the supremum over all finite $T\subseteq E_\alpha$ gives the desired weak type $(1,1)$ estimate.
\end{proof}

Combining the weak type $(1,1)$ estimate with the strong type $(\infty,\infty)$ estimate and applying Marcinkiewicz interpolation gives
\[
\left\|\mathcal{L}^\ast_sF\right\|_{\ell^2(\F_q[t]^k)}
\ll
\|F\|_{\ell^2(\F_q[t]^k)}.
\]
This proves \eqref{linearmax}.

\section{The ``oscillatory integral'' estimate}

We begin by recalling the notation from Lê-Liu-Wooley \cite{LeLiuWooley2025}.
Let $\mathbb{K}_{\infty}:=\F_q((1/t))$, and let $\mathcal{K}\subset \N$ be a finite set of
positive integers. Given $j,r\in \N$, write
\[
j=\sum_{\nu\ge 0}j_\nu p^\nu,
\qquad
r=\sum_{\nu\ge 0}r_\nu p^\nu
\]
for their base $p$ expansions, where $0\le j_\nu,r_\nu\le p-1$ and all but finitely many
digits are zero. By Lucas' theorem,
\[
\binom{r}{j}\equiv \prod_{\nu\ge 0}\binom{r_\nu}{j_\nu}\pmod p.
\]
Thus $p\nmid \binom{r}{j}$ if and only if $j_\nu\le r_\nu$ for every $\nu$, with the convention that $\binom{r_\nu}{j_\nu}=0$ when $j>r$. We write
\[
j\preceq_p r
\]
when this condition holds. Equivalently, $j\preceq_p r$ when $p\nmid \binom{r}{j}$. This relation
defines a partial ordering on $\N$. In particular, if $j\preceq_p r$, then $j\le r$.

The shadow of $\mathcal{K}$ is defined by
\[
\mathcal{S}(\mathcal{K})
:=
\{j\in \N:j\preceq_p r\text{ for some }r\in \mathcal{K}\}.
\]
Equivalently,
\[
\mathcal{S}(\mathcal{K})
=
\left\{j\in \N:p\nmid \binom{r}{j}\text{ for some }r\in \mathcal{K}\right\},
\]
 We also define
\[
\mathcal{K}^{\ast}
:=
\{k\in\mathcal{K}:p\nmid k\text{ and }p^\nu k\notin \mathcal{S}(\mathcal{K})
\text{ for every }\nu\in\N\}.
\]
L\^e-Liu-Wooley then obtained the following exponential sum estimate:

\begin{thm}[Weyl estimate, {\cite[Theorem~3.1]{LeLiuWooley2025}}]\label{thm:llw}
Fix a set $\mathcal{K}=\{r_1,\ldots,r_k\}\subset \mathbb{N}$. There exist positive constants $c_0$ and $C_0$,
depending only on $\mathcal{K}$ and $q$, such that the following holds. Let $\varepsilon >0$ be arbitrary. Let $n$ be sufficiently
large in terms of $\mathcal{K}, q$ and $\varepsilon$. Suppose that
\[
P(f)=\alpha_1f^{r_1}+\cdots+\alpha_kf^{r_k}
\]
is a polynomial with coefficients in $\mathbb{K}_{\infty}$ satisfying the bound
\[
\left|\sum_{\deg f<n}e(\alpha_1f^{r_1}+\cdots+\alpha_kf^{r_k})\right|\ge q^{n-\eta},
\]
for some $0 \leq \eta\le c_0n$. Then, for each $r_i\in\mathcal{K}^{\ast}$
which is maximal in $\mathcal{K}$ with respect to $\preceq_p$, there exist $a\in\F_q[t]$ and monic $g\in\F_q[t]$ such that $(a,g)=1$,
\[
\ord(g\alpha_i-a)<-r_in+ \varepsilon n + C_0\eta
\qquad\text{and}\qquad
\deg g\le \varepsilon n+ C_0\eta.
\]
\end{thm}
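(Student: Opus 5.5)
The plan is to follow the Vinogradov-type strategy adapted to characteristic $p$: convert a large value of the Weyl sum into a large value of a mean value over a translation-invariant system, and then extract a rational approximation to the selected coefficient $\alpha_i$. The shadow $\mathcal{S}(\mathcal{K})$ enters exactly because, after the binomial expansion in characteristic $p$, only the exponents $j\preceq_p r$ survive. We fix $r:=r_i\in\mathcal{K}^\ast$ maximal in $\mathcal{K}$ with respect to $\preceq_p$, and write
\[
S(\boldsymbol{\alpha}):=\sum_{\deg f<n}e\bigl(\alpha_1f^{r_1}+\cdots+\alpha_kf^{r_k}\bigr),
\qquad |S(\boldsymbol{\alpha})|\ge q^{n-\eta}.
\]

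\textbf{Step 1 (shifting).} By translation invariance of the range $\deg f<n$, for every $y$ with $\deg y<n$ we have $S(\boldsymbol{\alpha})=\sum_{\deg f<n}e(P(f+y))$. Expanding $P(f+y)$ via Lucas' theorem gives
\[
P(f+y)=\sum_{j\in\mathcal{S}(\mathcal{K})\cup\{0\}}\gamma_j(y)f^j,
\qquad
\gamma_j(y)=\sum_{r_l\succeq_p j}\binom{r_l}{j}\alpha_l y^{r_l-j}.
\]
Averaging over a set of $y$ of degree $<m$, with $m$ a small multiple of $n$, and applying H\"older with a large exponent $2s$, we bound $|S|^{2s}$ by a product of two factors. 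The first is the Vinogradov-type mean value $J_{s}(\mathcal{S}(\mathcal{K});n)$, counting solutions of $\sum_{l\le s}(x_l^j-y_l^j)=0$ for $j\in\mathcal{S}(\mathcal{K})$. The second is an averaged count of how often the coefficient vector $(\gamma_j(y))_j$ lies near $\F_q[t]$ modulo the dual box. For this step I would invoke the near-optimal mean value theorem for shadow systems of L\^e--Liu--Wooley, namely $J_s\ll q^{(2s-\sum_{j}j+\varepsilon)n}$. This is where the loss $\varepsilon n$ in the conclusion originates, and why $n$ must be large in terms of $\varepsilon$.

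\textbf{Step 2 (many good $y$).} Comparing the two sides, the hypothesis $|S|\ge q^{n-\eta}$ forces at least $q^{m-O(\eta)-\varepsilon n}$ values of $y$ for which a particular coefficient is close to a polynomial. The natural choice is the coefficient of $f^{j_0}$ for a suitable $j_0\preceq_p r$ with $j_0<r$, typically $j_0=r-p^{\nu}$ with $\nu$ the lowest nonzero base-$p$ digit position of $r$, and $j_0=r-1$ when $p\nmid r$. The closeness is
\[
\ord\{\gamma_{j_0}(y)\}<-j_0 n+O(\eta)+\varepsilon n.
\]
Here the hypothesis $r\in\mathcal{K}^\ast$ with $r$ maximal is used. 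Maximality and $p\nmid r$ ensure that $\gamma_{j_0}(y)=\binom{r}{j_0}\alpha_r y^{r-j_0}+(\text{contributions from other } r_l\succeq_p j_0)$ has a nonzero coefficient $\binom{r}{j_0}\not\equiv 0$ in front of $\alpha_r$. The condition that $p^\nu r\notin\mathcal{S}(\mathcal{K})$ ensures that no other monomial aliases with $\alpha_r$ after Frobenius twists. Where other $r_l$ contribute, I would handle them by downward induction on $\preceq_p$, or by choosing $j_0$ so that $r$ is the unique element of $\mathcal{K}$ above $j_0$; the latter is possible exactly because $r$ is maximal and lies in $\mathcal{K}^\ast$.

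\textbf{Step 3 (from many good $y$ to a single approximation).} We now have many $y$ with $\deg y<m$ such that $\alpha_r y^{r-j_0}$ (with $r-j_0=1$ when $p\nmid r$, hence linear in $y$) lies within $q^{-j_0n+O(\eta)+\varepsilon n}$ of $\F_q[t]$. By the function-field Dirichlet theorem and the standard ``many multiples near integers'' lemma, which uses the fact that these $y$ form an $\F_q$-subspace-like set so that pigeonhole applies, this yields monic $g$ and $a$ with $(a,g)=1$ satisfying
\[
\deg g\le \varepsilon n+C_0\eta
\qquad\text{and}\qquad
\ord(g\alpha_r-a)<-rn+\varepsilon n+C_0\eta .
\]
The restriction $\eta\le c_0n$ guarantees that the number of good $y$ is nontrivial, so that the pigeonhole argument closes.

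\textbf{Main obstacle.} I expect the main obstacle to be Step 2. In characteristic $p$ the binomial coefficients vanish erratically, so one must isolate $\alpha_r$ without contamination from the other $\alpha_l$. This is precisely where the combinatorics of $\preceq_p$, the shadow $\mathcal{S}(\mathcal{K})$ and the set $\mathcal{K}^\ast$ are needed, together with the corresponding mean value theorem over the shadow system. I would first try the choice $j_0=r-1$, valid since $p\nmid r$. I would fall back on the general $j_0=r-p^\nu$ together with the induction over $\preceq_p$ only if cross terms from other $r_l$ cannot otherwise be excluded.
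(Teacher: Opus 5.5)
The paper does not prove this statement. It quotes it from \cite[Theorem~3.1]{LeLiuWooley2025} and only adds the remark that $\eta=0$ is covered (the argument goes through, or one lets $\eta\to0$). So you are reconstructing an external argument. Your template is natural: exact shift invariance of $\{\deg f<n\}$, a mean value theorem for a translation-invariant system, a large-sieve count of shifts $y$ whose coefficient vector falls in one box, then a Dirichlet-type lemma. As written, however, its characteristic-$p$ steps fail.

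First, your mean value bound $J_s\ll q^{(2s-\sum_j j+\varepsilon)n}$, with $j$ running over $\mathcal{S}(\mathcal{K})$, is false as soon as $\max\mathcal{K}\ge p$. In that case $\mathcal{S}(\mathcal{K})$ contains multiples of $p$, and $\sum_l(x_l^{pj}-y_l^{pj})=\bigl(\sum_l(x_l^{j}-y_l^{j})\bigr)^p$ makes those equations redundant. For $p=3$ and $\mathcal{K}=\{4\}$ one has $\mathcal{S}(\mathcal{K})=\{1,3,4\}$, so $J_s=J_s(\{1,4\};n)\gg q^{(2s-5)n}$. Once the true saving is used, boxes of width $q^{-jn}$ in coordinates $j\in p\mathbb{N}$ cost more than the mean value gives back, and Step 2 becomes vacuous. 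In this very example your coordinate $j_0=r-1=3$ is such a coordinate, since $x\mapsto e(\alpha y x^3)$ is just an additive character $e(\beta x)$ of $\F_q[t]$.

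Second, your claim that maximality and $r\in\mathcal{K}^\ast$ let you pick $j_0\prec_p r$ with $r$ the only element of $\mathcal{K}$ above it is false. Take $p=3$ and $\mathcal{K}=\{5,7,11\}$, whose base-$3$ digits (least significant first) are $(2,1)$, $(1,2)$, $(2,0,1)$. Then $r=5$ is maximal and lies in $\mathcal{K}^\ast$. But $1,3,4\preceq_3 7$ and $2\preceq_3 11$, with binomial coefficients nonzero mod $3$. So every candidate $\gamma_{j_0}(y)$ contains a nonlinear term $\alpha_7y^{7-j_0}$ or $\alpha_{11}y^{11-j_0}$. Since $5,7,11$ are pairwise incomparable, ``downward induction on $\preceq_p$'' gives no order in which to remove these terms.

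The contaminating coefficient need not even be approximable. For $p=3$ and $\mathcal{K}=\{4,6\}$ (where $4$ is maximal and in $\mathcal{K}^\ast$), one has $\gamma_3(y)=\alpha_4y+2\alpha_6y^3$. Because $f^6=(f^2)^3$, $\alpha_6$ is invisible to the sum whenever its coefficients at $t^{-1},t^{-4},t^{-7},\ldots$ vanish. The only uncontaminated coordinate is $\gamma_1(y)=\alpha_4y^3$, which is not linear in $y$, so your Step 3 lemma does not apply.

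Finally, averaging over $\deg y<m$ with $m$ a small multiple of $n$ only yields $\ord(g\alpha_r-a)<-(r-1)n-m+\cdots$. To reach the scale $-rn$ you need $m=n$.

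What is missing is a real treatment of the Frobenius structure. You would have to remove the redundant $p$-power coordinates from both the mean value and the box geometry, and isolate $\alpha_r$ despite such cross terms. This is where the actual content of the cited theorem lies.
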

We remark that \cite[Theorem~3.1]{LeLiuWooley2025} is only stated for $\eta>0$, but the proof also works for $\eta=0$. Furthermore, it is easy to see that the result for $\eta>0$ also implies the result for $\eta=0$, by letting $\eta \rightarrow 0$.

It turns out that Theorem \ref{thm:llw} controls
$\wh{M}_n(\boldsymbol{\beta})$ with a lemma which plays the role of the classical oscillatory integral estimates over $\Z$. We call this an oscillatory integral in order to emphasize the analogy with the integer case, but, of course, all sums appearing below are discrete. This is the key input for the proof of the $\varepsilon$-free estimates in the section.

\begin{prop}[Oscillatory integral estimate]\label{smallbeta_inverse}
Consider a finite, nonempty subset $\mathcal{K}=\{r_1,\ldots,r_k\}\subset \N$. Let $n\in \N$, and suppose
$\beta_1,\ldots,\beta_k\in\mathbb{T}$. Suppose $r_k\in \mathcal{K}^\ast$ and that $r_k$ is maximal in $\mathcal{K}$ with respect to $\preceq_p$. 
Define
\[
\Delta:=\ord \beta_k+r_kn,
\]
and suppose that $\Delta<n$. Then one has the pointwise estimate
\begin{equation}\label{smallbeta_inverse_bound}
\left|\sum_{\deg f<n}e(\beta_1f^{r_1}+\cdots+\beta_kf^{r_k})\right|
 \ll_{\mathcal{K},q}
q^{n-c\Delta},
\end{equation}
where $c>0$ is a constant depending on $q$ and $\mathcal{K}$. 
\end{prop}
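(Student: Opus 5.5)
The estimate is proved by contradiction from Theorem~\ref{thm:llw}, with $\varepsilon$ fixed once and for all in terms of $\mathcal{K}$ so that the conclusion is $\varepsilon$-free. The guiding observation is simple. Since $\beta_k\in\mathbb{T}$ has $\ord\beta_k=\Delta-r_kn$, the only approximations $a/g$ with $\deg g$ small are forced to have $a=0$. Theorem~\ref{thm:llw} would then give $\ord(g\beta_k)<-r_kn+\varepsilon n+C_0\eta$, i.e.\ $\Delta+\deg g<\varepsilon n+C_0\eta$. If the sum is larger than $q^{n-c\Delta}$, so that $\eta=c\Delta$, this forces $\Delta(1-cC_0)<\varepsilon n$. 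That is a contradiction only when $\Delta\gg\varepsilon n$. So the $\varepsilon n$ loss has to be removed by rescaling: we apply the theorem at a shorter length $m$, comparable to $\Delta$, instead of at length $n$.

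\emph{Step 1 (trivial ranges).} If $\Delta\le \Delta_0$ for a constant $\Delta_0=\Delta_0(\mathcal{K},q)$, the bound holds trivially after enlarging the implicit constant. So assume $\Delta$ is large.

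\emph{Step 2 (rescaling).} Choose $m:=n-\lfloor \theta\Delta\rfloor$ with a suitable small $\theta$, or more naturally $m$ with $m\asymp \Delta/\varepsilon$ capped at $n$. Split $f=t^{n-m}y+z$ with $\deg y<m$ and $\deg z<n-m$. For fixed $z$, the inner sum over $y$ is a sum $\sum_{\deg y<m}e(\gamma_1y^{r_1}+\cdots+\gamma_ky^{r_k}+\text{lower terms})$. Its top coefficient is $\gamma_k=\beta_kt^{r_k(n-m)}$, so the relevant parameter becomes $\ord\gamma_k+r_km=\Delta$, unchanged. The lower-order terms in $y$ coming from the cross terms of $(t^{n-m}y+z)^{r_i}$ produce a polynomial in $y$ whose exponent set lies in the shadow $\mathcal{S}(\mathcal{K})$. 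Here I use that $r_k\in\mathcal{K}^\ast$ and is $\preceq_p$-maximal, so Theorem~\ref{thm:llw} still applies to $r_k$ with the enlarged exponent set $\mathcal{S}(\mathcal{K})$; I expect $r_k$ to remain in $\mathcal{S}(\mathcal{K})^\ast$ and maximal there, and this needs a check. By the triangle inequality, it suffices to bound each inner sum by $q^{m-c\Delta}$.

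\emph{Step 3 (apply the Weyl estimate at length $m$).} Suppose an inner sum exceeds $q^{m-\eta}$ with $\eta=c\Delta$. Take $m$ with $\Delta<m$ and $\varepsilon m\le \Delta/4$, say $m=\min(n,\lceil 4\Delta/\varepsilon\rceil)$; when $m=n$ we have $\Delta<n$ by hypothesis. With $c\le c_0\varepsilon/4$ we get $\eta\le c_0m$, and $m$ is large because $\Delta$ is large. Theorem~\ref{thm:llw} yields $a,g$ with $\deg g\le \varepsilon m+C_0\eta<1\cdot\Delta/2$ and $\ord(g\gamma_k-a)<-r_km+\Delta/2$. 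Since $\ord(g\gamma_k)=\deg g+\Delta-r_km<0$ and the right side is negative, we get $a=0$. Hence $\Delta+\deg g<\Delta/2$, a contradiction once $cC_0<1/4$.

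\emph{Main obstacle.} The delicate step is Step 2. I must verify that the polynomial in $y$ after fixing $z$ genuinely fits Theorem~\ref{thm:llw}, with a fixed exponent set independent of $n$ in which $r_k$ stays in the starred set and is maximal. I also need that the cross terms do not alter the coefficient of $y^{r_k}$. They do not, because the $y^{r_k}$ coefficient of $(t^{n-m}y+z)^{r_i}$ vanishes for $r_i<r_k$, and for $r_i>r_k$ only binomials $\binom{r_i}{r_k}$ with $r_k\preceq_p r_i$ survive, which maximality excludes. If this bookkeeping fails, the fallback is to apply the theorem directly at length $n$ only when $\Delta\ge n/2$ (taking $\varepsilon=1/8$). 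For smaller $\Delta$, I would instead use the splitting $f=hy+z$ with $h=t^{n-m}$ only in the top-degree variable, as in the proof of Proposition~\ref{prop:major-arc}.
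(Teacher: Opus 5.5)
Your argument follows the paper's route. You rescale to a length $m$ comparable to $\Delta$ and write $f=t^{n-m}y+z$. You note that the $y$-polynomial has exponents in $\mathcal{S}(\mathcal{K})$ with $y^{r_k}$-coefficient $\beta_kt^{r_k(n-m)}$. You then apply Theorem~\ref{thm:llw} at length $m$ with $\varepsilon$ fixed, force $a=0$, and reach a contradiction. Whether you bound every block or pigeonhole one block, as the paper does, is immaterial.

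Your pending check also goes through. By transitivity of $\preceq_p$, $\mathcal{S}(\mathcal{S}(\mathcal{K}))=\mathcal{S}(\mathcal{K})$, so $r_k\in\mathcal{S}(\mathcal{K})^\ast$. Maximality of $r_k$ in $\mathcal{K}$ together with antisymmetry makes it maximal in $\mathcal{S}(\mathcal{K})$.

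Two repairs are needed, however.

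\emph{The choice of $m$.} Your explicit choice $m=\min(n,\lceil 4\Delta/\varepsilon\rceil)$ gives $\varepsilon m\approx 4\Delta$. This violates your own requirement $\varepsilon m\le \Delta/4$. With it, Step 3 only yields $\Delta+\deg g<4\Delta+C_0\eta$, which is not a contradiction. Take instead $m=\min(n,\lfloor \Delta/(4\varepsilon)\rfloor)$ with, say, $\varepsilon\le 1/8$. This gives $\varepsilon m\le\Delta/4$ and $m>\Delta$ simultaneously, using the hypothesis $\Delta<n$ in the capped case.

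\emph{The case $r_k=1$.} The assertion $\ord(g\gamma_k)<0$ requires $r_km>\Delta+\deg g$, so with $\deg g<\Delta/2$ you need roughly $r_km\ge 3\Delta/2$. In the capped case $m=n$ you only know $n>\Delta$. This suffices for $r_k\ge 2$, but not for $r_k=1$ when $\Delta<n\le 3\Delta/2$. Handle $r_k=1$ separately, as the paper does. The conditions $1\in\mathcal{K}^\ast$ and maximality force $\mathcal{K}=\{1\}$, and then the sum vanishes by orthogonality once $\Delta\ge 0$.

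The paper avoids the cap altogether by taking $m=\lfloor 2\Delta/r_k\rfloor$, which is at most $\Delta<n$ once $r_k\ge 2$. The bound $\Delta\le r_k(m+1)/2$ still makes $\ord(g\gamma_k)$ negative for large $m$. With these fixes your version is complete.
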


\begin{proof}
If $\Delta \leq 0$, then the desired estimate follows from the trivial bound
\[
\left|\sum_{\deg f<n}e(\beta_1f^{r_1}+\cdots+\beta_kf^{r_k})\right|\le q^n.
\]
Thus we may assume that $0<\Delta<n$.

If $r_k=1$, then maximality of $1$ and the assumption $1\in\mathcal{K}^\ast$ force $\mathcal{K}=\{1\}$. Since $0<\Delta<n$, by orthogonality we have
\[
\sum_{\deg f<n}e(\beta_1f)=0.
\]
Thus we may assume that $r_k\ge 2$.

Define
\[
m:=\left\lfloor\frac{2\Delta}{r_k}\right\rfloor.
\]
If $m$ is bounded in terms of $\mathcal{K}$ and $q$, then the desired estimate follows by enlarging the implicit constant in \eqref{smallbeta_inverse_bound}. Thus we may assume that $m$ is sufficiently large.

Let $\varepsilon>0$ be sufficiently small ($\varepsilon= 1/3$ will suffice).
We now apply Theorem \ref{thm:llw} with the index set $\mathcal{S}(\mathcal{K})$ and let $c_0,C_0>0$ be the resulting constants. As noted above, we may assume $m$ to be large enough that Theorem \ref{thm:llw} applies with with $m$ in place of $n$.

Choose $\eta\ge 0$ so that
\[
q^{n-\eta}
=\left|\sum_{\deg f<n}e(\beta_1f^{r_1}+\cdots+\beta_kf^{r_k})\right|.
\]
Every polynomial $f$ with $\deg f<n$ can be written uniquely in the form
\[
f=z+t^{n-m}y,
\]
with $\deg z<n-m$ and $\deg y<m$. Hence
\[
q^{n-\eta}
=
\left|
\sum_{\deg z<n-m}\sum_{\deg y<m}
e\left(\sum_{j=1}^k\beta_j(z+t^{n-m}y)^{r_j}\right)
\right|.
\]
By the triangle inequality and pigeonholing, there exists a fixed $z\in\F_q[t]$ with
$\deg z<n-m$ such that

\begin{equation}\label{blocksum}
\left|
\sum_{\deg y<m}
e\left(\sum_{j=1}^k\beta_j(z+t^{n-m}y)^{r_j}\right)
\right|
\ge q^{m-\eta}.
\end{equation}
By expanding
\[
\sum_{j=1}^k\beta_j(z+t^{n-m}y)^{r_j}
\]
via the binomial theorem and collecting terms, we obtain a polynomial in $y$ whose exponents lie in
$\mathcal{S}(\mathcal{K})$. Indeed, the terms which survive in characteristic $p$ are precisely
those for which the corresponding binomial coefficient is not divisible by $p$, and hence their
exponents are in the $\mathcal{S}(\mathcal{K})$. Since $r_k$ is maximal in $\mathcal{K}$, it is also maximal in
$\mathcal{S}(\mathcal{K})$. This forces the coefficient of $y^{r_k}$ to be 
\[
\alpha:=\beta_k t^{r_k(n-m)}.
\]

We now show that $\eta\gg_{\mathcal{K},q}\Delta$. Let
\[
\kappa: = \min(c_0, \frac{\varepsilon}{C_0}).
\]
If $\eta\ge \kappa m$, then, since $m\gg_{\mathcal{K},q}\Delta$ for the range under consideration, we
immediately obtain $\eta\gg_{\mathcal{K},q}\Delta$. Thus we may assume that
\[
\eta<\kappa m.
\]

Since $0\leq \eta<c_0m$, Theorem \ref{thm:llw} applies to the sum appearing in \eqref{blocksum}. Here we also use the fact that we have $r_k\in \mathcal{S}(\mathcal{K})^\ast$ and $r_k$ maximal in $\mathcal{S}(\mathcal{K})$, which is an immediate consequence of our assumptions. 
Hence there exist $a\in\F_q[t]$ and monic $g\in\F_q[t]$ such that
\[
\ord(g\alpha-a)<-r_km+\varepsilon m+C_0\eta \leq -r_km + 2\varepsilon m 
\]
and
\[
\deg g\le \varepsilon m+C_0\eta \leq 2 \varepsilon m,
\]
by the assumtion that  $\eta< \kappa m\le \frac{\varepsilon}{C_0} m$.
In particular, we have $\ord(g\alpha-a)<0$. On the other hand,
\[
\ord(\alpha)=\ord(\beta_k)+r_k(n-m)=\Delta-r_km.
\]
Consequently,
\[\ord(g\alpha) = \deg g + \ord(\alpha) \leq \Delta - r_k m+ 2 \epsilon m < 0,
\]
since $\Delta \leq \frac{r_k}{2} (m+1) $.
This forces $a=0$. Thus,
\[
\deg g+\Delta-r_km
=\ord(g\alpha)
<-r_km+\varepsilon m+C_0\eta,
\]
and hence
\[
\Delta<\varepsilon m+C_0\eta.
\]
Since $m\le 2\Delta/r_k$ and $r_k\ge 2$, this gives
\[
\Delta<\varepsilon \Delta+C_0\eta.
\]
Therefore by rearranging we obtain
\[
\eta\ge \frac{1-\varepsilon }{C_0}\Delta\gg_{\mathcal{K},q}\Delta.
\]
Combining the two cases, we have shown that $\eta\ge c\Delta$ for some constant
$c>0$ depending only on $\mathcal{K}$ and $q$. Therefore
\[
\left|\sum_{\deg f<n}e(\beta_1f^{r_1}+\cdots+\beta_kf^{r_k})\right|
=q^{n-\eta}
\le q^{n-c\Delta}.
\]
This proves the lemma.
\end{proof}

\begin{remark}
One might wonder why we split the exponential modulo $t^{n-m}$ and then applied Theorem \ref{thm:llw} to an exponential sum over $\deg f<m$. Naturally one would first try applying Theorem \ref{thm:llw} to the original exponential sum over $\deg f<n$, but this would lead to a bound of the form $q^{n+\varepsilon n-c\Delta}$, and the $\epsilon n$ loss in the exponent renders this estimate useless for our purposes when $\Delta$ is small. To ameliorate the issue we instead applying Theorem \ref{thm:llw} at scale $m$, with $m$ proportional to $\Delta$.
\end{remark}

\section{The $\varepsilon$-free estimate}
 Using the Proposition \ref{smallbeta_inverse} from the previous section, we prove an $\varepsilon$-free version of Theorem \ref{thm:llw}, following the proof of Wooley \cite[Theorem~1.2]{Wooley2003Freeman} as suggested by Lê-Liu-Wooley directly below the statement of \cite[Theorem~3.1]{LeLiuWooley2025}. This will result in an $\varepsilon$-free version of \cite[Theorem~4.3]{ChampagneGeLeLiuWooley2025} and a more general oscillatory integral estimate. 

\begin{thm}[$\varepsilon$-free Weyl estimate] \label{epsilonfree31}
Fix a set $\mathcal{K}=\{r_1,\ldots,r_k\}\subset \mathbb{N}$. There exist positive constants $c,C,D$,
depending only on $\mathcal{K}$ and $q$, such that the following holds. Let $n$ be sufficiently
large in terms of $\mathcal{K}$ and $q$. Suppose that
\[
P(f)=\alpha_1f^{r_1}+\cdots+\alpha_kf^{r_k}
\]
is a polynomial with coefficients in $\mathbb{K}_{\infty}$ satisfying the bound
\[
\left|\sum_{\deg f<n}e(\alpha_1f^{r_1}+\cdots+\alpha_kf^{r_k})\right|\ge q^{n-\eta},
\]
for some  $0 \leq \eta\le cn$. Then, for each $r_i\in\mathcal{K}^{\ast}$
which is maximal in $\mathcal{K}$, there exist $a\in\F_q[t]$ and monic $g\in\F_q[t]$ such that
\[
\ord(g\alpha_i-a)<-r_in+C\eta + D
\qquad\text{and}\qquad
\deg g\le C\eta + D.
\]
\end{thm}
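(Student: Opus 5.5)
We follow the strategy of Wooley's $\varepsilon$-free argument \cite[Theorem~1.2]{Wooley2003Freeman}. First apply Theorem~\ref{thm:llw} with a \emph{fixed} small $\varepsilon$ (say $\varepsilon=1/(8r_i)$). Then use Proposition~\ref{smallbeta_inverse} to bootstrap the resulting approximation, removing the $\varepsilon n$ loss. Fix $r_i\in\mathcal{K}^\ast$ maximal in $\mathcal{K}$, and relabel so that $i=k$.

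\textbf{Step 1: a coarse approximation.} Take $c\le c_0$ small. Theorem~\ref{thm:llw} with this fixed $\varepsilon$ gives $a_0\in\F_q[t]$ and monic $g_0$ with $(a_0,g_0)=1$,
\[
\deg g_0\le \varepsilon n+C_0\eta,\qquad \ord(g_0\alpha_k-a_0)<-r_kn+\varepsilon n+C_0\eta.
\]
Since $\eta\le cn$ with $c$ small, both quantities are at most $2\varepsilon n$. Set $h:=g_0$ and $\beta_k:=\alpha_k-a_0/h$.

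\textbf{Step 2: reduce to an ``oscillatory integral''.} We want to peel off the rational part, as in Proposition~\ref{prop:major-arc}, but only in the $k$-th coordinate. The other coefficients $\alpha_j$ are not controlled, so the approach is as follows. Write $f=hy+z$ with $\deg z<\deg h$ and $\deg y<n-\deg h$. By the triangle inequality there is a $z$ with
\[
\Bigl|\sum_{\deg y<n'} e\bigl(P(hy+z)\bigr)\Bigr|\ge q^{n'-\eta},\qquad n'=n-\deg h .
\]
In $y$, the phase $P(hy+z)$ is a polynomial whose exponents lie in $\mathcal{S}(\mathcal{K})$. Its top coefficient, that of $y^{r_k}$, is $h^{r_k}\alpha_k=h^{r_k-1}a_0+h^{r_k}\beta_k$. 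The term $h^{r_k-1}a_0\,y^{r_k}$ is a polynomial with $\F_q[t]$ coefficient, so $e$ kills it. Hence the $y^{r_k}$ coefficient may be replaced by $h^{r_k}\beta_k$. The other coefficients are arbitrary elements of $\mathbb{K}_\infty$, reduced modulo $\F_q[t]$ to lie in $\mathbb{T}$; this reduction also does not change $e$.

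Now apply Proposition~\ref{smallbeta_inverse} to this sum at length $n'$, with the set $\mathcal{S}(\mathcal{K})$. The hypotheses hold because $r_k$ remains maximal and lies in $\mathcal{S}(\mathcal{K})^\ast$, as already observed in its proof. Set
\[
\Delta:=\ord(h^{r_k}\beta_k)+r_kn' = \ord(h\beta_k)+r_kn-(r_k-1)\deg h .
\]
By Step 1, $\Delta<r_k\varepsilon n\cdot 2\le n/4<n'$, so the proposition applies. It gives $q^{n'-\eta}\ll q^{n'-c'\Delta}$, hence $\Delta\le \eta/c'+O(1)$.

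\textbf{Step 3: conclude.} Take $g:=h$ and $a:=a_0$. Then
\[
\ord(g\alpha_k-a)=\Delta-r_kn+(r_k-1)\deg h ,
\]
so it remains to bound $\deg h$ by $C\eta+D$. \textbf{This is the main obstacle}: Theorem~\ref{thm:llw} only gives $\deg h\le\varepsilon n+C_0\eta$.

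To fix this, exploit minimality. First replace $g_0$ by the denominator of smallest degree among all rationals $a/g$ achieving $\ord(g\alpha_k-a)<-r_kn+2r_k\varepsilon n$. We then aim to show, by a Gauss sum bound, that a large denominator forces cancellation. Expanding in $z$ as in Proposition~\ref{prop:major-arc}, the sum factors approximately into a complete sum modulo $h$ times the $y$-sum. The complete sum satisfies
\[
q^{-\deg h}\sum_{\deg z<\deg h}e\Bigl(\frac{a_0z^{r_k}+\cdots}{h}\Bigr)\ll q^{-\gamma\deg h},
\]
which is the analogue of Proposition~\ref{GaussSumBound} for the single coprime coefficient $a_0$. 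This yields $\gamma\deg h\le \eta+O(1)$.

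The uncontrolled lower coefficients $\alpha_j$ obstruct exact factorization. If this route fails, I would instead follow Wooley's iteration. Apply Theorem~\ref{thm:llw} at a reduced scale $m\asymp \eta$ rather than $n$, after splitting $f=z+t^{n-m}y$ as in the proof of Proposition~\ref{smallbeta_inverse}. This produces a denominator of degree $\le \varepsilon m+C_0\eta\ll\eta$. Then use Step 2 to transfer the approximation quality from scale $m$ back to scale $n$: the same $g$ must approximate $\alpha_k$ well at scale $n$, otherwise $\Delta$ would be large. Combining gives $\deg g\le C\eta+D$ and $\ord(g\alpha_k-a)<-r_kn+C\eta+D$.
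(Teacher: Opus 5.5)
Your Steps 1 and 2 match the first half of the paper's proof: apply Theorem~\ref{thm:llw} with a fixed $\varepsilon$, foliate modulo the denominator, and apply Proposition~\ref{smallbeta_inverse} to the quotient sum to get $\Delta\le \eta/c'+O(1)$. There is one small slip: $\ord(h^{r_k}\beta_k)+r_kn'=\ord\beta_k+r_kn=\ord(h\beta_k)+r_kn-\deg h$, so in Step 3 one has $\ord(g\alpha_k-a)=\Delta-r_kn+\deg h$. This does not change the structure.

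You correctly see that everything hinges on proving $\deg h\ll\eta+1$, but neither route you offer for this closes the gap.

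\emph{Your minimality/Gauss-sum route.} The sum over residues modulo $h$ is not a Gauss sum, because its lower coefficients come from the arbitrary $\alpha_j$. So the Gauss sum bound (Corollary~\ref{GaussSumBound}) does not apply. It would also be circular to invoke it, since it is deduced from Corollary~\ref{epsilonfree43}, i.e.\ from the theorem you are proving. You then abandon this route yourself because the sum does not factor.

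\emph{Your fallback.} This fails structurally. After writing $f=z+t^{n-m}y$, the $y$-sum depends on $\alpha_k$ only through $\alpha_kt^{r_k(n-m)}$ modulo $\F_q[t]$. The top $r_k(n-m)$ digits of $\alpha_k$ are therefore invisible, and the denominator produced at scale $m$ says nothing about a small-denominator approximation to $\alpha_k$. Comparing with $a_0/h$ yields at best $h\mid g\,t^{r_k(n-m)}$. That split works in Proposition~\ref{smallbeta_inverse} only because $\beta_k$ is already known to be tiny.

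The missing idea is to reverse the order of the foliation and bound the inner sum uniformly rather than try to factor. Let $d=\deg h$. Step 1 gives $d+\Delta<\varepsilon n+C_0\eta<n$. Hence every cross term $\beta_k\binom{r_k}{j}(hy)^{r_k-j}z^j$, with $j\ge 1$, $\deg y<n-d$, $\deg z<d$, has order at most $\Delta+d-n-r_k<-1$. Together with $(hy+z)^{r_k}\equiv z^{r_k}\pmod h$, this gives
\[
\sum_{\deg f<n}e(P(f))=\sum_{\deg y<n-d}e\bigl(\beta_k(hy)^{r_k}\bigr)\sum_{\deg z<d}e\Bigl(H(hy+z)+\frac{a_0z^{r_k}}{h}\Bigr),\qquad H(u):=P(u)-\alpha_ku^{r_k}.
\]
For each fixed $y$, the inner sum is a Weyl sum of length $d$ with exponents in $\mathcal{S}(\mathcal{K})$. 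Its $z^{r_k}$-coefficient is exactly $a_0/h$, and its lower coefficients are arbitrary, which is precisely what Theorem~\ref{thm:llw} tolerates.

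Apply Theorem~\ref{thm:llw} at scale $d$ with fixed $\varepsilon$. If the inner sum exceeded $q^{(1-\gamma_2)d}$, you would obtain $\ell,b$ with $\deg\ell<d$ and $\ord(\ell a_0/h-b)<-d$ (for $r_k\ge 2$). This is impossible: since $(a_0,h)=1$, the quantity $\ell a_0/h-b$ is nonzero and has order at least $-d$. The case $r_k=1$ forces $\mathcal{K}=\{1\}$ and is handled by orthogonality.

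Hence $q^{n-\eta}\le q^{n-\gamma_2 d}$, so $d\ll \eta+1$. The $\varepsilon d$ loss is harmless because the contradiction is measured at scale $d$ itself. This is how the paper concludes.
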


\begin{proof}
Suppose $r_i\in \mathcal{K}^\ast$ and that $r_i$ is maximal in $\mathcal{K}$. Write
\[
H(f)=P(f)-\alpha_if^{r_i}.
\]
Choose $\varepsilon>0$ sufficiently small ($\varepsilon=1/3$ will suffice). Applying 
Theorem \ref{thm:llw}  with this fixed value of $\varepsilon$, and assuming $n$ is
large enough with respect to $\mathcal{K}$ and $q$ and $\eta<c_0n$, we obtain
$a\in\F_q[t]$ and monic $g\in\F_q[t]$ such that $(a,g)=1$,
\[
\ord(g\alpha_i-a)<-r_in+\varepsilon n+C_0\eta
\qquad\text{and}\qquad
\deg g\le \varepsilon n+C_0\eta,
\]
where $C_0,c_0>0$ are constants depending on $\mathcal{K}$, $q$. Set
\[
d:=\deg g,
\qquad
\beta:=\alpha_i-\frac{a}{g}.
\]
Define
\[
\Delta:=\ord\beta+r_in.
\]
Since $\ord(g\beta)=d+\ord\beta$, we have
\[
d+\Delta<\varepsilon n+C_0\eta < n,
\]
by decreasing $c_0$ if necessary.

We now foliate modulo $g$. Every $f$ with $\deg f<n$ can be written uniquely as
\[
f=gz+y,
\qquad
\deg y<d,
\qquad
\deg z<n-d.
\]
Thus
\begin{equation}\label{foliate}
\sum_{\deg f<n} e(P(f))
=
\sum_{\deg y<d}
e\!\left(\frac{ay^{r_i}}{g}\right)
\sum_{\deg z<n-d}
e\!\left(H(gz+y)+\beta(gz+y)^{r_i}\right).
\end{equation}
For fixed $y$, the inner phase is a polynomial in $z$ whose exponents lie in
$\mathcal{S}(\mathcal{K})$. Since $r_i$ is maximal, the coefficient of $z^{r_i}$ is $\beta g^{r_i}.$ Moreover,
\[
\ord\bigl(\beta g^{r_i}\bigr)+r_i(n-d)
=\ord\beta+r_in
=\Delta.
\]
Proposition~\ref{smallbeta_inverse}, applied at scale $n-d$ with the
exponent set $\mathcal{S}(\mathcal{K})$, gives
\[
\left|
\sum_{\deg z<n-d}
e\!\left(H(gz+y)+\beta(g'z+y)^{r_i}\right)
\right|
\ll_{\mathcal{K},q} q^{n-d-\gamma_1\Delta}
\]
for some $\gamma_1>0$ depending only on $\mathcal{K}$ and $q$. Here we use that
$r_i\in\mathcal{S}(\mathcal{K})^\ast$ and remains maximal in $\mathcal{S}(\mathcal{K})$.
Therefore, using the assumed lower bound and \eqref{foliate},
\[
q^{n-\eta}
\ll_{\mathcal{K},q}
q^d q^{n-d-\gamma_1\Delta}
=
q^{n-\gamma_1\Delta}.
\]
Hence $ \Delta\le \gamma_1\eta + D_1$ for some constant $D_1$ depending only on $\mathcal{K}$ and $q$.

It remains to control $d=\deg g$. We now swap the order of summation in \eqref{foliate}. Since
$d+\Delta<n$, all $y$-dependent terms coming from $\beta(gz+y)^{r_i}$ have order at most 
\[
\ord( \beta (gz)^{r_i-1}  y) \leq \ord(\beta) + (r_i-1)(n-1) + (d-1) = \ord(\beta) +r_i n - r_i -n +d = \Delta +d - n -r_i
< -1 \]
 and
therefore do not affect the character. Thus, we find that
\begin{equation}\label{foliate2}
 \sum_{\deg f<n} e(P(f)) 
=
\sum_{\deg z < n-d} e\left( \beta (gz)^{r_i}\right)
\sum_{\deg y<d}
e\!\left(H(gz + y) + \frac{ ay^{r_i}}{g}\right).
\end{equation}
For fixed $z$,
the exponents appearing in the inner sum over $y$ again lie in $\mathcal{S}(\mathcal{K})$, and by maximality of $r_i$
the coefficient of $y^{r_i}$ is exactly $a/g$. 
Let $c_1, C_1 >0$ be the constants depending only on $\mathcal{K}$ and $q$ given by Theorem \ref{thm:llw} when applied to the exponent set $\mathcal{S}(\mathcal{K})$. We may further assume that $d$ is sufficiently large in terms of $\mathcal{K}$ and $q$, so that Theorem \ref{thm:llw} applies at scale $d$ with our fixed choice of $\varepsilon$. 
We claim that there is a constant
$\gamma_2>0$, depending only on $\mathcal{K}$ and $q$, such that uniformly in $z$,
\[
\left|
\sum_{\deg y<d}
e\!\left(H(gz+y)+\frac{a}{g}y^{r_i}\right)
\right|
\leq q^{d-\gamma_2 d}.
\]
Indeed, if this failed, then Theorem \ref{thm:llw} applied at scale $d$ would give $b\in\F_q[t]$ and monic $\ell\in\F_q[t]$ such that
\[
\ord\!\left(\ell\frac{a}{g}-b\right)
<
-r_id+\varepsilon d+C_1\gamma_2 d
\qquad\text{and}\qquad
\deg \ell\le \varepsilon d+C_1\gamma_2 d.
\]
Choosing $\varepsilon$ and $\gamma_2$ sufficiently small gives $\deg \ell<d$ and, when $r_i\ge 2$,
\[
-r_id+\varepsilon d+C_1\gamma d<-d.
\]
But since $(a,g)=1$ and $\deg g=d$, we have that
$\ell a/g-b$ is nonzero and has order at least $-d$, a contradiction. The case $r_i=1$ is
handled directly by orthogonality in the nontrivial linear coefficient $a/g$. This proves the
claim.

Consequently,
\[
\left|\sum_{\deg f<n} e(P(f))\right|
\leq
q^{n-d}q^{d-\gamma_2 d}
=
q^{n-\gamma_2 d}.
\]
Combining this with the lower bound $q^{n-\eta}$ gives $d\le \gamma_2\eta$. Thus in all cases of $d$, we have
\[
\deg g = d \leq \gamma_2 \eta + D_2
\]
for some constant $D_2>0$ depending only on $\mathcal{K}$ and $q$.

Finally,
\[
\ord(g\alpha_i-a)
=
d+\ord\beta
=
-r_in+d+\Delta
\le -r_in+(\gamma_1+\gamma_2)\eta + (D_1 + D_2).
\]
By letting $C = \gamma_1 + \gamma_2$ and $D=D_1+D_2$, This proves the theorem.
\end{proof}

Now we may prove the following.

\begin{cor}\label{epsilonfree43}
Fix a set $\mathcal{K}=\{r_1,\ldots,r_k\}\subset \mathbb{N}$. Suppose further that $(r_i,p)=1$ for every $1\leq i\leq k$. There exist positive constants
$c,C,D$, depending only on $\mathcal{K}$ and $q$, such that the following holds. Let
$n$ be sufficiently large in terms of $\mathcal{K}$ and $q$. Suppose that
\[
P(f)=\alpha_1f^{r_1}+\cdots+\alpha_kf^{r_k}
\]
is a polynomial with coefficients in $\mathbb{K}_{\infty}$ satisfying the bound
\[
\left|\sum_{\deg f<n}e(\alpha_1f^{r_1}+\cdots+\alpha_kf^{r_k})\right|\ge q^{n-\eta},
\]
for some $0 \leq \eta\le cn$. Then, for any $1\leq i\leq k$, there exist 
$a_i\in\F_q[t]$ and monic $g_i\in\F_q[t]$ such that
\[
\ord(g_i\alpha_i-a_i)<-r_in+C\eta + D
\qquad\text{and}\qquad
\deg g_i\le C\eta + D.
\]
\end{cor}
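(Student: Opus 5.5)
The idea is to deduce the corollary from Theorem~\ref{epsilonfree31} by enlarging the exponent set, so that each $r_i$ becomes a $\preceq_p$-maximal element of $\mathcal{K}^\ast$. This follows the way \cite[Theorem~4.3]{ChampagneGeLeLiuWooley2025} is derived from \cite[Theorem~3.1]{LeLiuWooley2025}.

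Fix $i$. Let $\mathcal{K}_i\supseteq\mathcal{K}$ be a finite set in which $r_i$ is maximal with respect to $\preceq_p$ and lies in $\mathcal{K}_i^\ast$. The trouble is that some $r_j$ with $r_i\preceq_p r_j$ may block maximality. For instance $r_j>r_i$ may dominate $r_i$ digitwise. Moreover, $p^\nu r_i$ might lie in the shadow of $\mathcal{K}$. Since every $r_j$ is coprime to $p$, its last base-$p$ digit is nonzero. My plan is therefore the \emph{substitution trick}: replace the variable $f$ by $f^{p^\nu}$-type reparametrizations, or, more robustly, induct on $k$ by a van der Corput / Weyl differencing step that kills the larger exponents.

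Concretely, I will first handle the indices $r_i$ that are already $\preceq_p$-maximal. Note that $r_i\in\mathcal{K}^\ast$ requires $p^\nu r_i\notin\mathcal{S}(\mathcal{K})$. Because all elements are coprime to $p$, I expect this to hold automatically for maximal $r_i$. If $p^\nu r_i\preceq_p r_j$, then $r_i\preceq_p r_j$ by digit comparison at lower digits. Also $r_j\neq p^\nu r_i$ since $p\nmid r_j$, so $r_i\prec_p r_j$, contradicting maximality. Hence every maximal $r_i$ is in $\mathcal{K}^\ast$, and Theorem~\ref{epsilonfree31} applies directly.

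For non-maximal $r_i$, I proceed by downward induction along $\preceq_p$. The maximal elements already have good rational approximations $a_j/g_j$ with $\deg g_j\le C\eta+D$ and $\beta_j=\alpha_j-a_j/g_j$ small. I then foliate modulo $G=\prod g_j$, over maximal $j$, writing $f=Gz+y$. This follows the proof of Theorem~\ref{epsilonfree31}, and also Proposition~\ref{prop:major-arc}. For fixed $y$, the top-order terms $\frac{a_j}{g_j}(Gz+y)^{r_j}$ become constants in $z$ modulo $1$. The terms $\beta_j(Gz+y)^{r_j}$ have small coefficients, and their lower-order parts have controlled size, of order $q^{\Delta_j}$. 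Pigeonholing a $y$ then gives a sum over $\deg z<n-\deg G$ of size $\ge q^{n-\deg G-\eta}$. This sum is in an exponent set where the next layer of $r_i$ is maximal. The coefficient of $z^{r_i}$ is $G^{r_i}\alpha_i$ plus perturbations coming from expanding $\beta_j(Gz+y)^{r_j}$. Those perturbations are only slightly large, since $\Delta_j\le C\eta+D$.

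The main obstacle is exactly these perturbations. The coefficient of $z^{r_i}$ involves $\beta_j\binom{r_j}{r_i}G^{r_i}y^{r_j-r_i}$, which is not small in absolute terms. To avoid it, I would first try a different pigeonholing. I would restrict to $\deg y<n-m$ with $m\approx\eta'$-dependent, as in Proposition~\ref{smallbeta_inverse}, writing instead $f=y+t^{n-m}Gz$. I would then absorb the $\beta_j$-contributions into known quantities approximated by rationals of small height: their orders are $\le -r_in+O(\eta)$ after rescaling. Applying Theorem~\ref{epsilonfree31} at the new scale to the $z^{r_i}$ coefficient gives $g'$ and $a'$. Unwinding the rescaling yields $g_i$ with $\deg g_i\le C'\eta+D'$ and $\ord(g_i\alpha_i-a_i)<-r_in+C'\eta+D'$. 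The number of induction layers is bounded in terms of $\mathcal{K}$, so the constants stay dependent on $\mathcal{K}$ and $q$ only.
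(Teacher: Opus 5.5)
Your inductive step has a genuine gap. With $f=Gz+y$ and $z$ running over the full range $\deg z<n-\deg G$, the monomial $\beta_jG^{r_j}z^{r_j}$ stays in the phase. Its normalized size is $\ord(\beta_jG^{r_j})+r_j(n-\deg G)=\Delta_j$, which may be as large as $C\eta+D>0$. So $r_j$ is still in the exponent set of the $z$-sum, $r_i$ is not maximal there, and Theorem~\ref{epsilonfree31} cannot be applied to $z^{r_i}$ at all. The perturbed coefficient you flag is only a secondary symptom.

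Your repair $f=y+t^{n-m}Gz$ does not help. The varying part $t^{n-m}Gz$ still reaches degree $n-1$, so the $\beta_j$-terms still vary with $z$. Moreover, a rational approximation to the rescaled coefficient $t^{r_i(n-m)}G^{r_i}\alpha_i$ does not unwind to one for $\alpha_i$ with a denominator of degree $O(\eta)+O(1)$, because the power of $t$ ends up in the denominator. In Proposition~\ref{smallbeta_inverse} the rescaling is harmless only because the coefficient is already known to be small, which forces $a=0$.

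The missing idea is to \emph{shorten} the blocks by $\Theta(\eta)+O(1)$ rather than rescale. Let $g$ be the product of the denominators for all larger exponents, so that $\ord(g\alpha_i-b_i)<-r_in+C_1\eta+D^\ast$ and $\deg g\le C_1\eta+D^\ast$ for every $i>j$. Partition $\{\deg f<n\}$ into the $q^{n-M}$ cosets $\{gy+s:\deg y<M\}$, where $M=\lfloor n-L\eta-D^\ast-2\rfloor$ and $L\ge C_1$. Some block then has sum of size at least $q^{M-\eta}$. On that block, $\frac{b_i}{g}\bigl((gy+s)^{r_i}-s^{r_i}\bigr)\in\F_q[t]$, while $\bigl(\alpha_i-\frac{b_i}{g}\bigr)\bigl((gy+s)^{r_i}-s^{r_i}\bigr)$ has order $<-1$. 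Hence every larger exponent drops out of the phase completely. Now $r_j$ is the top exponent, with coefficient exactly $\alpha_jg^{r_j}$. Theorem~\ref{epsilonfree31} applies at scale $M=n-O(\eta)-O(1)$, and $g_j:=\tilde g_jg^{r_j}$ unwinds with only an $O(\eta)+O(1)$ loss.

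Your base layer also rests on a false claim: a $\preceq_p$-maximal element coprime to $p$ need not lie in $\mathcal{K}^\ast$. The relation $p^\nu r_i\preceq_p r_j$ compares digit $\mu$ of $r_i$ with digit $\mu+\nu$ of $r_j$, so it does not give $r_i\preceq_p r_j$. For example, take $p=3$ and $\mathcal{K}=\{2,7\}$. Since $3\mid\binom{7}{2}=21$, the element $2$ is maximal. But $3\nmid\binom{7}{6}=7$, so $6=3\cdot 2\in\mathcal{S}(\mathcal{K})$ and $2\notin\mathcal{K}^\ast$. Thus Theorem~\ref{epsilonfree31} cannot be applied to all maximal elements at once.

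The paper avoids this by inducting downward by \emph{size}. Only the largest exponent $r_k$ is handled directly. At step $j$, all $r_i>r_j$ have already been eliminated by the block argument above, so $r_j$ is the maximum of $\mathcal{S}(\{r_1,\ldots,r_j\})$ and automatically lies in its $\ast$-set. Neither the Frobenius substitution nor van der Corput differencing is needed.
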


\begin{proof}
We proceed by downward induction. Suppose without loss of generality that $r_1<\cdots<r_k$.
The case $i=k$ follows immediately from Theorem \ref{epsilonfree31}, since $r_k$ is maximal in
$\mathcal{K}$ and, because $(r_k,p)=1$, we have $r_k\in\mathcal{K}^{\ast}$.

Now suppose the result holds for $j<i\leq k$. We prove it for $r_j$. For each $j<i\leq k$,
let $c_i,C_i,D_i>0$ be the constants appearing in the induction hypothesis, and choose $c>0$
sufficiently small in terms of these constants and $\mathcal{K},q$. Applying the induction
hypothesis to each $i>j$, we obtain $a_i\in\F_q[t]$ and monic $g_i\in\F_q[t]$ such that
\[
\ord(g_i\alpha_i-a_i)<-r_in+C_i\eta + D_i
\qquad\text{and}\qquad
\deg g_i\le C_i\eta + D_i.
\]

For $j<i\leq k$ define
\[
g:=g_{j+1}\cdots g_k
\qquad\text{and}\qquad
b_i:=a_i\prod_{\substack{j<\ell\le k\\ \ell\neq i}}g_\ell.
\]
Then $g$ is monic, and for every $i>j$ we have
\[
\ord (g\alpha_i-b_i)<-r_in+C_1\eta+D^\ast
\qquad\text{and}\qquad
\deg g\le C_1\eta+D^\ast,
\]
where $C_1, D^\ast>0$ depends only on the constants already obtained in the induction.

Choose $L>0$ sufficiently large in terms of $\mathcal{K},q$ and $C_1$, and set
\[
M=\lfloor n-L\eta-D^\ast-2\rfloor .
\]
After decreasing $c$ if necessary, we may assume that $M$ is sufficiently large and that
$M+\deg g<n$. As in the proof of \cite[Theorem~4.1]{LeLiuWooley2025}, we partition
$\{f\in\F_q[t]:\deg f<n\}$ into $q^{n-M}$ blocks of the form
\[
B_s=\{gy+s:\deg y<M\}.
\]
By the pigeonhole principle, there exists a block $B_s$ such that
\[
\left|\sum_{\deg y<M} e(P(gy+s))\right|\ge q^{M-\eta}.
\]
By the choice of $L$ and the estimates for $g\alpha_i-b_i$ with $i>j$, the same argument used to
obtain \cite[(4.7)]{LeLiuWooley2025} shows that
\[
\left|
\sum_{\deg y<M}
e\bigl(\alpha_1(gy+s)^{r_1}+\cdots+\alpha_j(gy+s)^{r_j}\bigr)
\right|
\ge q^{M-\eta}.
\]

The coefficients in the expansion of
\[
\alpha_1(gy+s)^{r_1}+\cdots+\alpha_j(gy+s)^{r_j}
\]
as a polynomial in $y$ have exponents lying in $\mathcal{S}(\mathcal{H}_1)$, where
\[
\mathcal{H}_1:=\{r_1,\ldots,r_j\}.
\]
Moreover, since $r_j$ is coprime to $p$ and is the maximum of $\mathcal{H}_1$, we have
$r_j\in \mathcal{S}(\mathcal{H}_1)^\ast$, and $r_j$ is maximal in $\mathcal{S}(\mathcal{H}_1)$.
The coefficient of $y^{r_j}$ in the expansion is $\alpha_jg^{r_j}$. Therefore, by Theorem
\ref{epsilonfree31}, applied at scale $M$, there exist $\widetilde{a}_j\in\F_q[t]$ and monic
$\widetilde{g}_j\in\F_q[t]$ such that
\[
\ord(\widetilde{g}_j\alpha_jg^{r_j}-\widetilde{a}_j)<-r_jM+C_j'\eta+D'
\qquad
\text{and}
\qquad
\deg \widetilde{g}_j\le C_j'\eta+D',
\]
where $C_j',D'>0$ depends only on $\mathcal{K}$ and $q$.

Finally, take
\[
g_j:=\widetilde{g}_jg^{r_j}
\qquad\text{and}\qquad
a_j:=\widetilde{a}_j.
\]
Then
\[
\ord(g_j\alpha_j-a_j)<-r_jM+C_j'\eta+D'
<-r_jn+C_j\eta+D_j
\]
and
\[
\deg g_j
\le \deg\widetilde{g}_j+r_j\deg g
\le (C_j'+r_jC_1)\eta+D'+r_jD^\ast
\le C_j\eta+D_j,
\]
after choosing $C_j, D_j$ appropriately. This proves the induction step, and hence the corollary.
\end{proof}

From this, we obtain the following useful corollary, which serves as the oscillatory integral estimate in the proof of the main theorem.

\begin{cor}\label{smallbeta_inverse_ultimate}
Fix a set $\mathcal{K}=\{r_1,\ldots,r_k\}\subset \N$. Let $n\in \N$, and suppose
$\beta_1,\ldots,\beta_k\in\mathbb{T}$. Suppose $(r_i,p)=1$ for every $1\leq i\leq k$. 
Define
\[
\Delta_i:=\ord \beta_i+r_in,
\]
and suppose that $ \Delta_i<n/2$. Then one has the pointwise estimate
\begin{equation}\label{smallbeta_inverse_ultimate_bound}
\bigl|\sum_{\deg f<n}e(\beta_1f^{r_1}+\cdots+\beta_kf^{r_k})\bigr|
\ll_{\mathcal{K},q}
q^{n-c\Delta_i},
\end{equation}
where $c>0$ is a constant depending on $q$ and $\mathcal{K}$.
\end{cor}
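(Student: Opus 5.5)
Fix an index $i$ and write $\Delta:=\Delta_i$. We mirror the proof of Proposition~\ref{smallbeta_inverse}, but replace Theorem~\ref{thm:llw}, which only controls maximal exponents, by Corollary~\ref{epsilonfree43}, which controls every coefficient once all exponents are coprime to $p$. If $\Delta\le D'$ for a suitable constant $D'$, the trivial bound $q^n$ suffices after enlarging the implicit constant. So we may assume $\Delta$ is large.

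Set $m:=\lfloor 2\Delta/r_i\rfloor$, so that $m\asymp\Delta$ and $m<n$ because $\Delta<n/2$. Split $f=z+t^{n-m}y$ with $\deg z<n-m$ and $\deg y<m$. If we define $\eta$ by $|\sum_{\deg f<n}e(\cdots)|=q^{n-\eta}$, pigeonholing gives a fixed $z$ with
\[
\Bigl|\sum_{\deg y<m}e\Bigl(\sum_j\beta_j(z+t^{n-m}y)^{r_j}\Bigr)\Bigr|\ge q^{m-\eta}.
\]
The issue is that expanding produces exponents in $\mathcal S(\mathcal K)$, which need not be coprime to $p$. So we cannot apply Corollary~\ref{epsilonfree43} directly to the set $\mathcal S(\mathcal K)$.

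Instead we use the alternative from Proposition~\ref{smallbeta_inverse}. Reorder so that $r_i$ is handled through Theorem~\ref{epsilonfree31} applied to the exponent set $\mathcal S(\mathcal K)$ when $r_i$ is maximal there. When $r_i$ is not maximal, we first peel off the larger exponents as in the downward induction of Corollary~\ref{epsilonfree43}. More precisely, we aim to apply Corollary~\ref{epsilonfree43}-type reasoning to the $y$-polynomial. The coefficient of $y^{r_i}$ is $\beta_i t^{r_i(n-m)}$ plus contributions $\binom{r_j}{r_i}\beta_j z^{r_j-r_i}t^{r_i(n-m)}$ from the $j$ with $r_j>r_i$. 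Each such contribution has order at most $\ord\beta_j+(r_j-r_i)(n-m)+r_i(n-m)=\Delta_j-r_jm$. Call the full coefficient $\alpha$.

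Here is the main case analysis. If $\eta\ge\kappa m$ for a small constant $\kappa$, we are done, since $m\gg\Delta$. Otherwise $\eta<\kappa m$, and the inverse theorem at scale $m$ (valid for small $c$, with $m$ large) yields monic $g$ and $a$ such that
\[
\ord(g\alpha-a)<-r_im+C\eta+D,\qquad \deg g\le C\eta+D\le 2C\kappa m.
\]
Since $\ord(\beta_it^{r_i(n-m)})=\Delta-r_im\le -r_im/2+O(1)<0$, the choice of $\kappa$ makes $\ord(g\alpha)<0$, which forces $a=0$. We then need $\ord\alpha\ge\Delta-r_im$, so that $\Delta\le C\eta+D$, i.e. $\eta\gg\Delta$, giving the bound $q^{n-c\Delta}$.

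The main obstacle is this last step: the cross terms from larger $r_j$ could cancel the $\beta_i$ contribution to $\alpha$. My plan is to apply the argument to an index $i_0$ maximizing $\Delta_j$; this is also how the corollary is used in Proposition~\ref{outermajorreduction}. Since $\Delta_j\le\Delta_{i_0}$, the cross terms have order at most $\Delta_{i_0}-r_jm<\Delta_{i_0}-r_{i_0}m$ for $r_j>r_{i_0}$. This strict inequality prevents cancellation, so $\ord\alpha=\Delta_{i_0}-r_{i_0}m$.

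For a general $i$, the bound follows from the bound for $i_0$, because $\Delta_i\le\Delta_{i_0}$ gives $q^{n-c\Delta_{i_0}}\le q^{n-c\Delta_i}$. The exponent-set issue is resolved by applying the inverse theorem at the top of the induction in Corollary~\ref{epsilonfree43}. Alternatively, we can invoke Theorem~\ref{epsilonfree31} for the set $\mathcal S(\mathcal K)$: every element of $\mathcal K$ lies in $\mathcal S(\mathcal K)^\ast$ when coprime to $p$, and the downward induction there only uses coprimality of the exponent currently being treated.
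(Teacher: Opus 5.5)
\textbf{There is a genuine gap at the central step.} After you pass to the block sum over $\deg y<m$, you need an $\varepsilon$-free inverse theorem that controls the coefficient $\alpha$ of $y^{r_{i_0}}$ in a polynomial whose exponents lie in $\mathcal S(\mathcal K)$. Nothing in the paper supplies this.

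Theorem~\ref{epsilonfree31} only handles exponents that are maximal with respect to $\preceq_p$, and $r_{i_0}$ generally is not maximal. Corollary~\ref{epsilonfree43} requires every exponent to be coprime to $p$, and $\mathcal S(\mathcal K)$ generally is not.

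Both patches you offer are false. Take $\mathcal K=\{1,p+1\}$, so $\mathcal S(\mathcal K)=\{1,p,p+1\}$.
\begin{enumerate}
\item Here $1\preceq_p p+1$, so $r_{i_0}=1$ is not maximal. Also $1\notin\mathcal S(\mathcal K)^\ast$, because $p\cdot 1\in\mathcal S(\mathcal K)$.
\item The downward induction in Corollary~\ref{epsilonfree43} is not local to the exponent being treated. To discard the higher terms at the reduced scale $M$, it uses approximations for \emph{every} larger exponent, here including $p$. For such an exponent no approximation theorem can hold. Indeed, $y\mapsto e(\alpha_p y^p)$ is an additive character, so it equals $e(\gamma y)$ for some $\gamma\in\mathbb T$. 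Choosing $\alpha_1=-\gamma$ gives $\sum_{\deg y<m}e(\alpha_1y+\alpha_py^p)=q^m$ with $\alpha_p$ arbitrary.
\end{enumerate}
Your $y$-coefficients are constrained, since they come from $\sum_j\beta_j(z+t^{n-m}y)^{r_j}$, but your argument never exploits that structure.

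\textbf{A secondary problem.} Deducing the bound for a general $i$ from the maximizing index $i_0$ needs $\Delta_{i_0}<n/2$, whereas the hypothesis only controls $\Delta_i$. For large $\Delta_{i_0}$, your $m$ can exceed $n$, and the intermediate bound for $i_0$ can be false outright. For example, take $\mathcal K=\{1,2\}$ with $p$ odd, $\beta_1=0$, $\beta_2=1/t$: the sum has modulus $q^{n-1/2}$ while $\Delta_2=2n-1$. Your route suffices for the application in Proposition~\ref{outermajorreduction}, but not for the statement as posed.

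\textbf{The missing observation is that the rescaling is unnecessary.} In Proposition~\ref{smallbeta_inverse}, the passage to scale $m\asymp\Delta$ was there only to neutralize the $\varepsilon n$ loss in Theorem~\ref{thm:llw} (see the Remark following it). Corollary~\ref{epsilonfree43} has no such loss. So you can apply it at scale $n$ directly to the original polynomial, whose exponents are all coprime to $p$, with target exponent $r_i$ for the given $i$.

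If $\eta<\kappa\Delta_i$ with $\kappa$ small, so that $\eta\le c_0n$, you obtain monic $g$ and $a$ with
\[
\ord(g\beta_i-a)<-r_in+C_0\eta+D_0,\qquad \deg g\le C_0\eta+D_0.
\]
The hypothesis $\Delta_i<n/2$ makes both $\ord(g\beta_i)\le C_0\eta+D_0-r_in+\Delta_i$ and $-r_in+C_0\eta+D_0$ negative. This forces $a=0$. Then $\deg g-r_in+\Delta_i<-r_in+C_0\eta+D_0$ gives $\Delta_i<C_0\eta+D_0$.

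This is the paper's proof. It needs no hypothesis on the other $\Delta_j$, and it avoids the shadow $\mathcal S(\mathcal K)$, the cross terms, and the choice of $i_0$ entirely.
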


\begin{proof}
The case $\Delta_i \leq 0$ follows from the trivial bound, so assume $\Delta_i>0$. It is enough to
prove the result for $n$ sufficiently large in terms of $\mathcal{K}$ and $q$, since the remaining
values of $n$ can be handled by increasing the implicit constant.

Let $c_0,C_0,D_0>0$ depending on $\mathcal{K}$ and $q$ be the constants in Corollary
\ref{epsilonfree43}. Let $\kappa>0$ be a small constant depending only on
$\mathcal{K}$ and $q$ to be chosen later. If the sum is zero, there is nothing to prove. Otherwise,
choose $\eta\ge 0$ so that
\[
q^{n-\eta}
=
\left|
\sum_{\deg f<n}e(\beta_1f^{r_1}+\cdots+\beta_kf^{r_k})
\right|.
\]
If $\eta\ge \kappa\Delta_i$, then the desired estimate follows. Thus we may assume
\[
\eta<\kappa\Delta_i.
\]
Assume for the moment that we also have $\eta>0$. 
Since $\Delta_i<n/2$, after decreasing $\kappa$ if necessary we have
$\eta<c_0n$, and hence Corollary \ref{epsilonfree43} applies with target exponent
$r_i$. Therefore there exist $a\in\F_q[t]$ and monic $g\in\F_q[t]$ such that
\[
\ord(g\beta_i-a)<-r_in+C_0\eta+D_0
\qquad\text{and}\qquad
\deg g\le C_0\eta+D_0.
\]
Now
\[
\ord(g\beta_i)
=
\deg g+\ord\beta_i
\le C_0\eta+D_0-r_in+\Delta_i
< -r_in+D_0+(1+C_0\kappa)\Delta_i.
\]
Since $\Delta_i<n/2$, this quantity is negative as long as $\kappa$ is sufficiently small with
respect to $C_0$ and $\mathcal{K}$. Also,
\[
-r_in+C_0\eta+D_0<0
\]
after decreasing $\kappa$ once more. Therefore both $\ord(g\beta_i)$ and
$\ord(g\beta_i-a)$ are negative. This forces $a=0$, since if $a\neq 0$, then
$a\in\F_q[t]$ has nonnegative order and the ultrametric inequality gives
$\ord(g\beta_i-a)\ge 0$.

Consequently,
\[
\ord(g\beta_i)<-r_in+C_0\eta+D_0.
\]
Using
\[
\ord(g\beta_i)=\deg g+\ord\beta_i=\deg g-r_in+\Delta_i,
\]
we obtain
\[
\deg g+\Delta_i<C_0\eta+D_0.
\]
In particular,
\[
\Delta_i<C_0\eta+D_0.
\]
Combining this with the first case $\eta\ge \kappa\Delta_i$, and taking
$c=\min\{\kappa,1/C_0\}$ after decreasing it if necessary, we have
\[
\left|
\sum_{\deg f<n}e(\beta_1f^{r_1}+\cdots+\beta_kf^{r_k})
\right|
=q^{n-\eta}
\ll_{\mathcal{K},q}q^{n-c\Delta_i},
\]
as needed.

Finally, if $\eta=0$, then applying the above argument with $0<\eta'<\kappa \Delta_i$ yields $\Delta_i\ll_{\mathcal{K},q} \eta'+D_0$. Taking $\eta'\to 0$ yields $\Delta_i\ll_{\mathcal{K},q}1$, which trivially implies the desired result. 
\end{proof}

Next we bound the Gauss sum $\Lambda(a_1,...,a_k,h)$. This follows from the proof of
\cite[Lemma~7.5]{LeLiuWooley2025}, with Corollary \ref{epsilonfree43} used in place of
\cite[Theorem~4.1]{LeLiuWooley2025}.
\begin{cor}[Gauss sum bound]\label{GaussSumBound}
Fix a finite set $\mathcal{K}=\{r_1,\ldots,r_k\}\subset\mathbb{N}$ with $(r_i,p)=1$ for every $1\leq i\leq k$. Then there exists a constant $\gamma>0$, depending only on $\mathcal{K}$ and $q$, such that the following holds. If $h\in \F_q[t]$ is monic and $a_1,\ldots,a_k\in \F_q[t]$ satisfy $(a_1,\ldots,a_k,h)=1$, then
\[
|\Lambda(a_1,\ldots,a_k,h)|\ll_{\mathcal{K},q} q^{-\gamma\deg h}.
\]
\end{cor}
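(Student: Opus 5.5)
The plan is to read the Gauss sum as a complete exponential sum and apply Corollary \ref{epsilonfree43} at scale $n=\deg h$ with frequencies $\alpha_i=a_i/h$. Write $s=\deg h$. Then
\[
q^{s}\Lambda(a_1,\ldots,a_k,h)=\sum_{\deg f<s}e\Bigl(\frac{a_1}{h}f^{r_1}+\cdots+\frac{a_k}{h}f^{r_k}\Bigr).
\]
For $s$ bounded in terms of $\mathcal K,q$ the trivial bound $|\Lambda|\le 1$ suffices after enlarging the implicit constant. So assume $s$ is large and suppose, for contradiction, that $|\Lambda|>q^{-\gamma s}$. This means the sum is at least $q^{s-\eta}$ with $\eta=\gamma s$, and we choose $\gamma\le c$, where $c$ is the constant of Corollary \ref{epsilonfree43}.

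Corollary \ref{epsilonfree43} then gives, for every $1\le i\le k$, a polynomial $b_i$ and a monic $g_i$ with
\[
\ord(g_ia_i/h-b_i)<-r_is+C\gamma s+D,\qquad \deg g_i\le C\gamma s+D.
\]
Next I show that $h\mid g_ia_i$ for each $i$. Suppose instead that $g_ia_i/h-b_i\neq 0$. Writing it over the denominator $h$, its order is at least $-s$. Take $\gamma<1/(2C)$ and $s$ large. Since $r_i\ge 1$, the upper bound $-r_is+C\gamma s+D$ is then less than $-s$, a contradiction. Hence $g_ia_i\equiv 0\pmod h$ for all $i$.

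Now take $G=\operatorname{lcm}(g_1,\ldots,g_k)$. Then $h\mid Ga_i$ for all $i$. Since $(a_1,\ldots,a_k,h)=1$, there are polynomials $x_i,y$ with $\sum_i x_ia_i+yh=1$, so $G=\sum_i x_iGa_i+yGh$. Every term on the right is divisible by $h$, so $h\mid G$. This gives
\[
s\le\deg G\le\sum_i\deg g_i\le k(C\gamma s+D).
\]
That is impossible once $\gamma<1/(2kC)$ and $s$ is large. Hence $|\Lambda|\le q^{-\gamma s}$ for large $s$, with $\gamma=\min\{c,1/(2kC)\}$.

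The step that needs the most care is applying Corollary \ref{epsilonfree43} with the parameter at exactly the scale $s=\deg h$. The threshold $\eta\le cs$ is met by choosing $\gamma\le c$, and the requirement that $n$ be large is met by the small-$s$ reduction above. I note that Corollary \ref{epsilonfree43} does not assume $(b_i,g_i)=1$, and none is needed: the divisibility argument only uses that $g_ia_i/h-b_i$ has denominator dividing $h$.
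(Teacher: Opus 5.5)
\textbf{Gap: the divisibility step fails for $r_i=1$.} Your route differs from the paper's. The paper transplants the proof of \cite[Lemma~7.5]{LeLiuWooley2025} to get a bound $q^{\deg d_i/C_i}q^{m(1-1/C_i+\varepsilon)}$ for each $i$ with $a_i\ne0$, then pigeonholes an index with $\deg(h/(a_i,h))\ge m/k$. You instead apply Corollary~\ref{epsilonfree43} once at scale $n=\deg h$ and finish with divisibility and B\'ezout. That is cleaner and correct for every index with $r_i\ge 2$. But your claim ``since $r_i\ge1$, $-r_is+C\gamma s+D<-s$'' is false when $r_i=1$. There the bound is $-s+C\gamma s+D>-s$, which is compatible with $g_ia_i/h-b_i\neq 0$ having order $\ge -s$. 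So you learn nothing about $h\mid g_ia_i$ for the linear coefficient.

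This case is not vacuous: $1$ is coprime to $p$, and $P(u)=u^2+u$ already gives $\mathcal K=\{1,2\}$. Without the linear index you only know $h\mid Ga_i$ for $r_i\ge2$. That does not force $h\mid G$ when those $a_i$ share a large factor with $h$ and the coprimality $(a_1,\ldots,a_k,h)=1$ comes only from $a_1$.

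\textbf{Repair.} The missing ingredient is orthogonality in the linear variable; keep your argument unchanged when $1\notin\mathcal K$ and handle $1\in\mathcal K$ as follows.

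Suppose $r_1=1$. Let $G'=\operatorname{lcm}\{g_i: r_i\ge2\}$ (or $G'=1$ if there are none), $h_0=(h,G')$ and $w=h/h_0$. Then $\deg h_0\le k(C\gamma s+D)$. Since $(w,G'/h_0)=1$, the relation $h\mid G'a_i$ gives $w\mid a_i$ for all $i$ with $r_i\ge2$. Any common divisor of $a_1$ and $w$ therefore divides every $a_i$ and $h$, so $(a_1,w)=1$ by hypothesis.

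Write $f=h_0y+z$ with $\deg z<\deg h_0$ and $\deg y<\deg w$. For $r_i\ge2$ the fraction $a_i/h$ has denominator $h_0$, so $a_i(h_0y+z)^{r_i}/h\equiv a_iz^{r_i}/h$ modulo $\F_q[t]$. Hence
\[
\sum_{\deg f<s}e\Bigl(\sum_{i=1}^k\frac{a_if^{r_i}}{h}\Bigr)=\sum_{\deg z<\deg h_0}e\Bigl(\sum_{i=1}^k\frac{a_iz^{r_i}}{h}\Bigr)\sum_{\deg y<\deg w}e\Bigl(\frac{a_1y}{w}\Bigr).
\]
The inner sum vanishes by \eqref{eq:orthogonality}, because $(a_1,w)=1$ and $\deg w\ge s-k(C\gamma s+D)\ge1$ for small $\gamma$ and large $s$. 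So $\Lambda=0$, contradicting $|\Lambda|>q^{-\gamma s}$. With this repair your proof is complete.
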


\begin{proof}
If $\deg h=0$, then the claim is trivial. Thus we may assume that $m:=\deg h\geq 1$. For each
$1\leq i\leq k$, write
\[
d_i:=(a_i,h)
\qquad\text{and}\qquad
h_i:=h/d_i.
\]
Since each $r_i$ is coprime to $p$, the proof of \cite[Lemma~7.5]{LeLiuWooley2025},
using Corollary \ref{epsilonfree43} in place of \cite[Theorem~4.1]{LeLiuWooley2025},
gives the following estimate. For each $1\leq i\leq k$ there exists a constant $C_i>1$,
depending only on $r_i$, $\mathcal{K}$, and $q$, such that for every $\varepsilon>0$,
\begin{equation}\label{GaussInit}
\sum_{\deg f<m} e\!\left(\frac{a_1f^{r_1}+\cdots+a_kf^{r_k}}{h}\right)
\ll_{\mathcal{K},\varepsilon,q}
q^{\deg d_i/C_i}q^{m(1-1/C_i+\varepsilon)}
\end{equation}
whenever $a_i\neq 0$.

Set
\[
C:=\max_{1\leq i\leq k} C_i
\qquad\text{and}\qquad
\varepsilon:=\frac{1}{2kC}.
\]
Since $(a_1,\ldots,a_k,h)=1$, we have
\[
\operatorname{lcm}(h_1,\ldots,h_k)=h.
\]
Hence
\[
\sum_{i=1}^k \deg h_i\geq \deg h=m.
\]
By the pigeonhole principle, there exists $i_0$ such that
\[
\deg h_{i_0}\geq \frac{m}{k}.
\]
In particular $a_{i_0}\neq 0$, so we may apply \eqref{GaussInit} with index $i_0$. Dividing by
$q^m$, we obtain
\begin{align*}
|\Lambda(a_1,\ldots,a_k,h)|
&=q^{-m}\left|\sum_{\deg f<m} e\!\left(\frac{a_1f^{r_1}+\cdots+a_kf^{r_k}}{h}\right)\right|\\
&\ll_{\mathcal{K},q} q^{-m}q^{\deg d_{i_0}/C_{i_0}}q^{m(1-1/C_{i_0}+\varepsilon)}\\
&=q^{-\deg h_{i_0}/C_{i_0}+\varepsilon m}\\
&\leq q^{-m/(kC_{i_0})+\varepsilon m}\\
&\leq q^{-m/(kC)+\varepsilon m}\\
&=q^{-m/(2kC)}.
\end{align*}
Taking
\[
\gamma:=\frac{1}{2kC}
\]
proves the proposition.
\end{proof}

\nocite{Bergelson2006,LeLiuWooley2025,ChampagneGeLeLiuWooley2025}
\bibliographystyle{alpha}
\bibliography{references}

\end{document}